\theoremstyle{plain}
\newtheorem{theo}{Theorem}[section]
\newtheorem{lemme}[theo]{Lemma}
\newtheorem{propo}[theo]{Proposition}
\newtheorem{cor}[theo]{Corollary}
\newtheorem{nb}[theo]{Remark}
\numberwithin{equation}{section}
\def \leq {\leqslant}
\def \geq {\geqslant}
\def\ind#1{\lower5pt\hbox{$\scriptstyle #1$}}
\def \d {\,\mathrm{d} }
\def \L {\mathcal{L}}
\def \H {\mathcal{H}}
\def \ds {\displaystyle}
\def \D {\mathcal{D}}
\def \ds {\displaystyle}
\def\Q {\mathcal{Q}}
\def\R{\mathbb{R}}
\def \n {n}
\def \u {U}
\def \v {{v}}
\def \vb {\v_*}
\def \M {\mathcal{M}}
\def \d {\mathrm{d }}
\def \D {\mathscr{D}}
 \def \S {\mathbb{S}^{d-1}}
\def \com {$C_0$-semigroup }
\def \ut {(\u(t))_{t \geq 0}}
\def \vt {(V(t))_{t \geq 0}}
\def \It {\int_{\R^d \times \S}}
\def \IR {\int_{\R^d}}
\def \X {\mathbf{X}}
\def \Y {\mathbf{Y}}
\title[]
{Convergence to equilibrium for linear spatially homogeneous Boltzmann equation with hard and soft potentials: a semigroup approach in  $L^1$-spaces}
\author{B. Lods \& M. Mokhtar-Kharroubi}
\address{Bertrand Lods, Dipartment of Economics and Statistics ESOMAS  \& Collegio
  Carlo Alberto, Universit\`{a} degli Studi di Torino,  Corso Unione
  Sovietica, 218/bis, 10134 Torino, Italy}
\email{bertrand.lods@unito.it}
\address{Mustapha Mokhtar-Kharroubi, Universit\'e de Franche-Comt\'e, Equipe de Math\'ematiques, CNRS UMR 6623, 16, route de Gray, 25030 Besan\c con Cedex, France
}
\email{mustapha.mokhtar-kharroubi@univ-fcomte.fr}
\begin{document}

\maketitle

\begin{abstract} We investigate the large time behavior of solutions to the spatially homogeneous linear Boltzmann equation from a semigroup viewpoint. Our analysis is performed in some (weighted) $L^{1}$-spaces. We deal with both the cases of hard and soft potentials (with angular cut-off). For hard potentials, we provide a new proof of the fact that, in weighted $L^{1}$-spaces with exponential or algebraic weights, the solutions converge exponentially fast towards equilibrium. Our approach uses weak-compactness arguments combined with  recent results of the second author on positive semigroups in $L^{1}$-spaces. For soft potentials, in $L^{1}$-spaces, we exploits the  convergence to ergodic projection for perturbed substochastic semigroup \cite{mmk1} to show that, for very general initial datum, solutions to the linear Boltzmann equation converges to equilibrium in large time. Moreover, for a large class of initial data, we also prove that the convergence rate is at least algebraic. Notice that, for soft potentials, no exponential rate of convergence is expected because of the absence of spectral gap.\\

\noindent \textsc{Keywords.} Linear Boltzmann equation, soft and hard potentials, positive semigroup, ergodic projection.\end{abstract}

\tableofcontents

\section{Introduction}

We investigate in the present work the large time behavior of solutions to the linear Boltzmann equation for both hard and soft potentials, under some cut-off assumption. We consider solutions in some weighted $L^{1}$-spaces and our approach is based on functional analytic results regarding positive semigroups in such spaces. To keep the presentation and results simple, we investigate only here the spatially homogeneous Boltzmann equation which exhibits already a quite rich behavior. Extension to spatially inhomogeneous problems is planned for future investigations. 

\subsection{The kinetic model}

Before entering the details of our results, let us formulate the problem we aim to address here: we shall consider the spatially homogeneous linear Boltzmann equation (BE in the sequel)
\begin{equation}
\label{eq:BE}
{\partial_t} f(t,v)=\L f(t,v), \qquad f(0,v)=f_0(v) \geq 0\end{equation}
in which  $\L$ is the linear Boltzmann operator given
by
\begin{equation}\label{linearoperator}
\L f=\Q(f,\M)\end{equation}
where $\Q(f,g)$ denotes the bilinear Boltzmann operator
\begin{equation}\label{bolt}
\Q(f,g)=
\It B(v-\vb, \sigma) \left(
f(\v')g(\vb')
-f(\v)g(\vb)\right)\d\vb \d\sigma\end{equation} where  $\v'$ and $\vb'$ are the pre-collisional
 velocities   which result, respectively, in $ \v $ and $\vb$ after elastic
 collision
  \begin{equation}
\v'=\dfrac{\v +\vb}{2} + \dfrac{|v -\vb|}{2}\sigma, \qquad \vb'=\dfrac{\v +\vb}{2} - \dfrac{|v -\vb|}{2}\sigma.
\end{equation}
Here $f$ and $g$ are nonnegative functions of the velocity variable $\v \in\R^d$ and $B(q,\n)$ is a nonnegative function .
We will
assume throughout this paper that the distribution function
$\mathcal{M}$ appearing in \eqref{linearoperator}  is a given   Maxwellian
function:
\begin{equation}\label{maxwe1}
\M(\v)=\dfrac{\varrho}{\big({2\pi \Theta}\big)^{d/2}}\exp
\left\{-\dfrac{|\v- \mathbf{u}|^2}{2\Theta}\right\}, \qquad \qquad \v
\in \R^d,
\end{equation}
where $\mathbf{u}  \in \R^d$ is the given bulk velocity  and $\Theta
>0$ is the given effective temperature of the host fluid. Notice that, by Galilean invariance, there is no loss of generality in assuming
$$\varrho=\Theta=1, \qquad \mathbf{u}=0.$$
We shall investigate in this paper several collision operators $\L=\L_B$
corresponding to various interactions collision kernels $B=B(v-\vb,\sigma)$. Typically, we shall consider the case
$$B(v-\vb,\sigma)=\Phi(|v-\vb|)\,b(\cos \theta), \qquad \cos \theta=\left\langle \dfrac{v-\vb}{|v-\vb|}, \sigma \right\rangle$$
where  $b\::\:[-1,1] \to \R^+$ and $\Phi\::\:\R^+ \to \R^+$ are measurable. We shall consider models with \emph{angular cut-off}, i.e. we assume
\begin{equation}\label{cutoff}
\ell_b:=\int_{\S} b(\cos \theta) \d\sigma< \infty \qquad \forall v,\vb  \in \R^d;\end{equation}
which, in turns, reads
$$\ell_{b}=|\mathbb{S}^{d-2}|\int_{-1}^1 \left(1-s^2\right)^{\frac{d-3}{2}} b(s) \d s < \infty.$$
Notice that, without loss of generality, we can assume that $\ell_b=1.$ Typically, we shall consider the case of power-like potentials $\Phi(\cdot)$:
$$\Phi(r)=r^\gamma \qquad \forall r > 0$$
where we distinguish between two cases:
\begin{enumerate}
\item \textit{\textbf{Hard potentials}} for which $\gamma \geq 0$;
\item \textit{\textbf{Soft potentials}} for which $-d < \gamma < 0.$
\end{enumerate}
A very peculiar model, particularly relevant for physical applications, is the one of \emph{hard-spheres} in dimension $d=3$   for which
$$B(v-\vb,\sigma)=B_{\mathrm{hs}}(\v-\vb,\sigma)= b_0 |v-\vb|$$
i.e. $\Phi(r)=r$ and $b(s)=b_0$ is constant.

Notice that, \emph{independently of the interaction kernel} $B$, it is very easy to check that $\L(\M)=0$ and that, in any reasonable spaces, the kernel of $\L$ is spanned by $\M$, i.e.
$\M$ is the only equilibrium with unit density.

Notice that, thanks to the cut-off assumption \eqref{cutoff}, the Boltzmann operator $\L$ can be split into a gain part and a loss part:
$$\L f= K f - T\,f$$
where the gain part
\begin{equation}\label{gain}
K f(v)=\L^+ f(v):=\It B(v-\vb, \sigma)
f(\v')\M (\vb')\d \vb \d \sigma \qquad v \in \R^d\end{equation}
while the loss part $T=\L^-$ is a multiplication operator
$$T f =\It  B(v-\vb, \sigma)
f(\v )\M (\vb)\d \vb \d \sigma=\Sigma(v)\,f(v)$$
where the collision frequency
$$\Sigma(v)=\It  B(v-\vb, \sigma)\M(\vb)\d\vb \d\sigma=\ell_b\, \left(\Phi \ast \M\right)(v)$$
where $\ast$ denotes the convolution in $\R^d$. For power-like potentials, i.e. whenever $\Phi(r)=r^\gamma$,  there exist two positive constants $\sigma_1,\sigma_2 > 0$ such that
\begin{equation}\label{sigma}
\sigma_1 \left(1+|v| \right)^\gamma \leq \Sigma(v) \leq \sigma_2 \left(1+|v|\right)^\gamma \qquad \forall v \in \R^d.
\end{equation}

The non-local part $K$ admits an integral form, based upon the Carleman representation of $\Q^+$. Namely, there exists some measurable kernel $k=k_B(v,w)$ such that
\begin{equation}\label{eq:Kkerne}
Kf=\L^+_B f(v)=\int_{\R^d} k_B(v,w)f(w)\d w \qquad \forall f \in L^1(\Sigma(v),\d v)=L^1\left(\left(1+|v|\right)^\gamma,\d v\right).\end{equation}
The explicit form of the kernel $k_{B}(v,w)$ can be derived, following the lines of \cite{carleman}. Details of such computations are given in Appendix \ref{sec:appA}.

\subsection{Presentation of the results and strategy}

We provide here an \emph{unified} approach to the problem of convergence towards equilibrium for solutions to \eqref{eq:BE}. As already said, we will deal with solutions to \eqref{eq:BE} in suitable weighted $L^{1}$-spaces. Namely, we shall consider here functional space of the type
$$\X=L^1(\R^{d}\,,\,m^{-1}(v)\d v)$$
endowed with its natural norm where  the weight function $m=m(v)$ positive and such that $m^{-1}(v) \geq 1$ for any $v \in \R^{d}$. We shall consider here two different types of weights:
\begin{align}
&\textbf{(Exponential weights)} \qquad m(v)=\exp(-a|v|^{s}) \qquad a \geq  0\,,\qquad s \in (0,1],\label{expwe}\\
\noindent\text{or} \nonumber \\ 
&\textbf{(Algebraic weights)} \qquad m(v)=\left(1+|v|^{\beta}\right)^{-1}, \qquad \beta \geq 0.\label{algwe}
\end{align}
 
More precisely, for hard-potentials, our approach will require the weights $m$ to be non trivial, i.e $a >0$ for exponential weights or $\beta >0$ for algebraic weight where as, for soft potentials, we will deal with ``unweighted'' spaces, i.e. $m(v)=1$ for any $v \in \R^{d}$ which corresponds to $a=0$ or $\beta=0.$
 
The approach we present here is, by some aspects, much simpler than the afore-mentioned ones and relies only on very general results about positive semigroups in $L^{1}$-spaces. 

We wish in particular to emphasize that one of the main interesting features of our approach is that provides directly both the well-posedness (in the semigroup sense) of \eqref{eq:BE} and the asymptotic behavior of the solutions. This is a major contrast with respect to the recent quantitative approach \cite{gualdani} based upon space enlargement and factorisation for which the well-posedness of the associated problem is always given as a first \emph{initial assumption}. 
The strategy we adopt is very natural from the viewpoint of classical perturbation theory of semigroups. It consists, roughly speaking, in considering the non-local operator $K$ as a perturbation of the multiplication operator $T$ associated to $\Sigma$ and showing that  $K$ is \emph{weakly compact} with respect to $T$ in $\X$. Notice that, for  hard-potentials,  $K$ is an unbounded operator and one has to invoke perturbation theorem for semigroup which allows for unbounded perturbation. Such generation result is recalled in the next section as well as the basic results concerning positive semigroup in $L^{1}$-spaces.

For hard potential, our main result is the following:
\begin{theo}\label{theo:introHard}
If the weight function is given by \eqref{expwe} or \eqref{algwe}, then, for any $\gamma \in [0,d-2]$ and any $b \in L^{1}(\S)$, the linear Boltzmann operator $(\L,\D(\L))$ with 
$$\D(\L)=\Y=:L^{1}(\R^{d},\left(1+|v|\right)^{\gamma}\,m^{-1}(v)\d v) \subset \X$$
 is the generator of a positive \com $(V(t))_{t \geq 0}$ on $\X$. Moreover,  there exist $C > 0$ and $\lambda_\star >0$ such that the \com $(V(t))_{t \geq 0}$ generated by $(\L,\D(\L))$ in $\X$ satisfies
$$\|V(t)f_{0}-\varrho_0 \M \|_\X \leq C\exp(-\lambda_\star t)\|f_{0}\|_\X \qquad \text{ for all nonnegative }  f_{0}\in \X, \quad \forall t \geq 0$$
where $\varrho_0=\ds \int_{\R^d} f_{0}(v)\d v$ for any $f_{0} \in \X$.
\end{theo}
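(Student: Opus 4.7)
Our plan to prove Theorem~\ref{theo:introHard} follows the general scheme outlined above and proceeds in four steps.

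\textbf{Step 1 (Generation).} We rely on the splitting $\L = K - T$ where $Tf = \Sigma(v)f$. Since, by \eqref{sigma}, $\Sigma(v) \geq \sigma_1(1+|v|)^\gamma > 0$, the operator $-T$ with domain $\Y$ generates a positive (bounded) $C_0$-semigroup on $\X$. The non-local operator $K$ is positive and, through the kernel representation \eqref{eq:Kkerne}, is shown to be $T$-bounded from $\Y$ into $\X$. We then invoke the Desch--Miyadera--Voigt type perturbation theorem for positive perturbations in $L^1$-spaces (stated in the next section): together with the dissipation identity $\int_{\R^d}(Kf - Tf)\,\d v = 0$ valid for $0 \leq f \in \Y$, this yields that the closure of $\L$, with domain $\Y$, generates a positive, \emph{honest}, $C_0$-semigroup $\vt$ on $\X$.

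\textbf{Step 2 (Weak compactness).} The technical core of the argument is to show that for some $\lambda > 0$ the operator
\[
K(\lambda + T)^{-1}\;:\;\X \longrightarrow \X
\]
is \emph{weakly compact}. Using the Carleman-type kernel $k_B(v,w)$ computed in Appendix~\ref{sec:appA}, together with the lower/upper bounds \eqref{sigma} on $\Sigma$ and the explicit forms \eqref{expwe}--\eqref{algwe} of the weight $m$, this reduces to verifying the Dunford--Pettis criterion (equi-integrability and tightness at infinity) for the image of the unit ball of $\X$. The Gaussian factor coming from $\M$ in $k_B(v,w)$ ensures tightness, while equi-integrability is controlled by the integrability of $k_B$ in the second variable; the upper range $\gamma \leq d-2$ is precisely what is needed to dominate the singularity of $k_B$ near the diagonal.

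\textbf{Step 3 (Essential growth bound).} Once $K(\lambda + T)^{-1}$ is weakly compact, the results on positive semigroups in $L^1$ of the second author (recalled in the next section) imply that the essential growth bound of $\vt$ is dominated by that of the semigroup generated by $-T$, which by \eqref{sigma} is $-\esin_{v \in \R^d}\Sigma(v) < 0$. Consequently, there exists $\kappa > 0$ such that the essential spectrum of $\L$ is contained in $\{\Re z \leq -\kappa\}$, and every spectral value of $\L$ with $\Re z > -\kappa$ is an isolated eigenvalue of finite algebraic multiplicity.

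\textbf{Step 4 (Spectral gap and convergence).} Honesty and positivity give $\L \M = 0$, so $0 \in \sigma(\L)$. Irreducibility of $\vt$ (which follows from positivity of $k_B$ on a set of full measure) together with the Perron--Frobenius theory for positive $C_0$-semigroups shows that $0$ is a simple, dominant, isolated eigenvalue, with one-dimensional eigenspace $\mathrm{span}(\M)$ and associated spectral projection $P f_0 = \varrho_0 \M$. Combining this with the essential gap of Step 3 yields the existence of $\lambda_\star > 0$ such that $\sigma(\L) \setminus \{0\} \subset \{\Re z \leq -\lambda_\star\}$, and the standard spectral decomposition of $\vt$ then produces the exponential estimate of the theorem.

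The main obstacle is clearly \textbf{Step 2}: proving weak compactness of $K(\lambda + T)^{-1}$ in weighted $L^1$-spaces requires a genuine analysis of the Carleman kernel $k_B$, with the additional subtlety that both exponential weights \eqref{expwe} and algebraic weights \eqref{algwe} must be handled uniformly; this is where the restriction $\gamma \in [0,d-2]$ enters the picture. Once this weak compactness is in hand, Steps 3 and 4 follow routinely from the abstract positive semigroup machinery recalled in the next section.
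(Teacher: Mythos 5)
Your Step~1 does not work as written. The Kato--Voigt/Desch--Miyadera--Voigt mechanism you invoke for positive perturbations in $L^{1}$ requires the dissipation inequality with respect to the measure that defines the ambient space, here $m^{-1}(v)\d v$; the identity $\int_{\R^{d}}(Kf-Tf)\d v=0$ is mass conservation in the \emph{unweighted} $L^{1}(\d v)$ and says nothing about $\X$. In fact $\int_{\R^{d}}(Kf-Tf)\,m^{-1}(v)\d v$ can be strictly positive (take $f\geq 0$ concentrated near $v=0$, where $m^{-1}\simeq 1$: then $\int Kf\,m^{-1}\d v>\int Kf\,\d v=\int \Sigma f\,\d v\simeq\int\Sigma f\,m^{-1}\d v$ because the gain term spreads mass onto velocities where $m^{-1}>1$), so the semigroup is not substochastic on $\X$ and no honesty/extension argument along these lines can identify the generator as $\L$ on $\D(\L)=\Y$, which is what the statement requires. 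The correct route --- and the natural repair inside your own plan --- is to get generation \emph{from} the weak compactness of Step~2: by Theorem~\ref{theommk}, positivity of $K$ together with weak compactness of $KR(\lambda,T)$ already yields that $K+T$ with domain $\Y$ (no closure) generates a positive semigroup on $\X$ \emph{and} that $V(t)$ and $U(t)$ share the same essential spectrum; your Steps~1 and~3 are a single application of that theorem, and the separate substochasticity argument should be discarded.

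In Step~2 the mechanism you describe for tightness is not the right one. The Gaussian factor in the Carleman kernel does \emph{not} give decay at infinity on the critical region where $|v|$ and $|w|$ are both large and comparable, since there $\exp\bigl(-\tfrac18\bigl[|v-w|+\tfrac{|v|^{2}-|w|^{2}}{|v-w|}\bigr]^{2}\bigr)$ is of order one; indeed, without weight ($a=0$ or $\beta=0$) the operator $\mathbf{K}_{\gamma}$ is \emph{not} weakly compact, and the failure occurs precisely in the tightness condition \eqref{DP22rr}, while equi-integrability \eqref{DP11} and the Gaussian-controlled region \eqref{DP22r} survive (Remark~\ref{noweight}). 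What saves tightness is the weighted moment-gain estimate of Proposition~\ref{BCL}, $\int_{\R^{d}}k_{\gamma}(v,w)m^{-1}(v)\d v\leq C(1+|w|^{\gamma-\alpha})\,m^{-1}(w)$ with $\alpha=s$ (exponential weight) or $\alpha=2$ (algebraic weight), so that after dividing by $(1+|w|)^{\gamma}m^{-1}(w)$ one gains a factor vanishing as $|w|\to\infty$; note this is also where the restriction $\gamma\geq 0$ enters, whereas $\gamma\leq d-2$ only tames the diagonal singularity, as you say. As written, your sketch would stall exactly at this point. Finally, in Step~4 the dominance of $0$ (i.e.\ $s(\L)=0=\omega_{0}(V)$) is not automatic from Perron--Frobenius alone: it is obtained by testing $\L\M=0$ against a strictly positive eigenfunction of the adjoint given by irreducibility (Proposition~\ref{aren}) and by using $\omega_{0}=s$ for positive semigroups on $L^{1}$ (Proposition~\ref{propo:sLomega}); this is standard but should be stated.
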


The above result asserts that \eqref{eq:BE} is well-posed in the semigroup sense and that the associated solutions converge exponentially fast towards the unique equilibrium with same mass as $f_{0}$. Notice that the convergence of $V(t)$ as $t \to \infty$ holds in \emph{operator norm}. The proof of the Theorem is perturbative in nature and consists in looking $\L$ and the associated semigroup $(V(t))_{t\geq 0}$ as perturbations of the multiplication operator $T$ and the associated semigroup $(U(t))_{t\geq0}$. Notice that the spectrum of $T$ is given by
$$\mathfrak{S}(T)=\overline{\mathrm{Range}(-\Sigma)}=(-\infty,-\eta]$$
where $\eta=\inf_{v \in \R^{d}}\Sigma(v)$ and $\overline{\mathrm{Range}(-\Sigma)}$ denotes the closure (in $\mathbb{C}$) of the range of $-\Sigma(\cdot).$ The main steps to prove the above Theorem are the following:

\begin{enumerate}[i)]
\item The main tool in the proof is the fact that the collision operator $K$ is positive and weakly compact as an operator from $\Y \to \X$. Since $\Y$ is also the domain of the multiplication operator $T$, it shows that $K$ is $T$-relatively weakly compact. In particular, this implies that the spectrum of $\L=K+T$ consists in some essential spectrum which is included in the spectrum of $T$ and eigenvalues of finite algebraic multiplicities which can accumulate only 
in $-\eta$.  
\end{enumerate}

In particular, this shows the existence of some spectral positive spectral gap $\lambda \in (0,\eta)$ for $\L$. Remember that, in general and because of the absence of some \emph{spectral mapping theorem} for semigroups, this is not enough in general to obtain the decay prescribed by Theorem \ref{theo:introHard}. However, since we are dealing with \emph{positive semigroups in} $L^{1}$-spaces, the decay can be obtained by the following arguments:

\begin{enumerate}[ii)]
\item From the above point and since we are dealing here with positive operators in $L^{1}$-spaces, Desch's theorem directly ensures that $\L=T+K$ is the generator of a positive semigroup $(V(t))_{t \geq 0}$ on $\X$ which is a perturbation of the \com $(U(t))_{t\geq 0}$ generated by $T$. Moreover, these two semigroups share the essential spectrum. 
\item Finally, since $0$ is an algebraically simple eigenvalue of $\L$  the decay estimate follows from general results concerning positive semigroups in $L^{1}$-spaces.
\end{enumerate}

In the above steps, the most technical part of the proof consists in showing the weak-compactness property of $K\::\:\Y \to \X$. To do so, we exploit the integral nature of $K$ given by \eqref{eq:Kkerne} together with suitable comparison techniques that allow us to restrict ourselves to the properties of $K$ whenever $\gamma=d-2$ (corresponding to hard-spheres interactions). Notice in particular that, for such a technical step, the introduction of the weight $m$ is definitely necessary since it can be shown that, without weight, the operator $K$ is not weakly-compact (see \cite{m2As} and Remark \ref{noweight}).\medskip

For soft potentials, our main contribution are the following two results. First, a non quantitative convergence result is establish 
\begin{theo}\label{theo:introSoft} 
Assume now that $\X=L^{1}(\R^{d},\d v)$. If  $\gamma \in (-d,0)$ and $b(\cdot) \in L^1(\S)$ then 
$$\X=\mathrm{Ker}(\L) \oplus \overline{\mathrm{Im}(\L)}$$
and the \com $(V(t))_{t \geq 0}$ generated by $\L$ in $\X$ satisfies
\begin{equation}\label{eq:vtf}
\lim_{t \to \infty}\|V(t)f-\varrho_f \M \|_\X =0\end{equation}
where $\varrho_f=\ds \int_{\R^d} f(v)\d v$ for any nonnegative $f \in \X.$
\end{theo}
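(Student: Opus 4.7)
The plan is to follow the high-level strategy outlined in the introduction, treating Theorem \ref{theo:introSoft} as the soft-potential analogue of Theorem \ref{theo:introHard}. The key difference is that, since $\gamma<0$, one has $\inf_{v\in\R^{d}}\Sigma(v)=0$, so the multiplication semigroup generated by $T=-\Sigma(v)\cdot$ has no spectral gap; one therefore cannot hope for exponential decay and must instead appeal to the ergodic machinery of \cite{mmk1}.

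First, I would establish that $(\L,\D(\L))$ generates a positive $C_{0}$-semigroup $(V(t))_{t\geq0}$ on $\X=L^{1}(\R^{d},\d v)$ by viewing $K$ as a substochastic perturbation of $T$ in the sense of Kato--Voigt. The crucial point is the mass conservation identity
\[
\int_{\R^{d}}\L f(v)\,\d v=\int_{\R^{d}}\bigl(Kf(v)-\Sigma(v)f(v)\bigr)\d v=0
\]
for any nonnegative $f\in\D(\L)=L^{1}(\R^{d},(1+\Sigma(v))\,\d v)$, which follows from the standard pre-/post-collisional change of variables in \eqref{bolt}. This, together with the positivity of $K$, places us exactly in the framework of honest (conservative) substochastic semigroups on $L^{1}$, so that $\L$ is indeed a generator and $(V(t))_{t\geq0}$ is stochastic.

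Next, I would identify the ergodic fixed point. As recorded after \eqref{maxwe1}, $\L\M=0$ and the kernel of $\L$ in $\X$ is precisely $\mathrm{span}(\M)$; combined with $\int_{\R^{d}}\M\,\d v=1$, this supplies a unique normalized invariant density. The ergodic projection $P$ onto $\mathrm{Ker}(\L)$ commutes with $V(t)$ and, by the mass-conservation identity above, must act as $Pf=\varrho_{f}\,\M$ on $\X$. At this stage I would invoke the convergence-to-ergodic-projection theorem from \cite{mmk1} for perturbed substochastic semigroups: under the present structure (conservativity, uniqueness of the invariant density, and a suitable weak-compactness / irreducibility input coming from the integral representation \eqref{eq:Kkerne} of $K$), this theorem simultaneously yields the direct sum decomposition
\[
\X=\mathrm{Ker}(\L)\oplus\overline{\mathrm{Im}(\L)}
\]
and the strong convergence $\lim_{t\to\infty}\|V(t)f-Pf\|_{\X}=0$ for every $f\in\X$, which is precisely \eqref{eq:vtf}.

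The step I expect to be the main obstacle is checking the hypotheses of the \cite{mmk1} ergodic result for our specific operator $K$ when $\gamma\in(-d,0)$, i.e.\ verifying enough compactness/regularity of $K$ with respect to $T$ to rule out a nontrivial peripheral point spectrum on the imaginary axis. Since for soft potentials $\Sigma\in L^{\infty}(\R^{d})$ and $K$ is a bounded operator on $\X$, the perturbation analysis is globally simpler than in the hard-potential case, but one still has to produce a weak-compactness statement for $K$ (or for some power of the resolvent of $\L$) in the \emph{unweighted} space $L^{1}(\R^{d},\d v)$, where the absence of a confining weight makes tightness at infinity delicate; this is where the precise integral kernel bounds from Appendix \ref{sec:appA} should enter, together with the decay of $\Sigma(v)$ at infinity to recover compactness on the velocity-tail part.
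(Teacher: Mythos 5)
Your overall framework is right (substochastic perturbation, mass conservation, ergodic projection onto $\mathrm{span}(\M)$, and the convergence theorem of \cite{mmk1}/Theorem~\ref{02law}), but you have anticipated the wrong obstacle, and the step you flag as the "main obstacle" would in fact be a dead end. You expect to need a weak-compactness statement for $K$ in the unweighted space $L^{1}(\R^{d},\d v)$. But as the paper itself records (Remark~\ref{noweight} and the remark following the proof of Proposition~\ref{prop:comp}), $\mathbf{K}_{\gamma}$ is \emph{not} weakly compact on unweighted $L^{1}$, and for $\gamma<0$ the final step of the weak-compactness argument breaks down precisely because $1+|w|^{\gamma-\alpha}$ is not dominated by $(1+|w|)^{\gamma}$. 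Trying to "recover compactness on the velocity-tail part" using decay of $\Sigma$ will not work here.

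The actual mechanism the paper uses is much simpler and is where the real content lies: for soft potentials, $\Sigma\in L^{\infty}(\R^{d})$ by \eqref{sigma}, so $T$ is a \emph{bounded} multiplication operator, and $K$ is a bounded operator on $\X$ by the Hardy--Littlewood--Sobolev inequality (Proposition~\ref{propo:bounded}). Hence $\L=K-T\in\mathscr{L}(\X)$, so both $(V(t))_{t\ge 0}$ and $(U(t))_{t\ge 0}$ are \emph{uniformly continuous} semigroups, and the remainder $R_{1}(t)=V(t)-U(t)$ in the Dyson--Phillips expansion is automatically norm-continuous. That is exactly the regularity-of-the-Dyson--Phillips-remainder hypothesis of Theorem~\ref{02law}; no weak-compactness of $K$ is needed at all. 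Together with $\mathrm{Ker}(\L)\supset\mathrm{span}(\M)\neq\{0\}$ and irreducibility of $(V(t))_{t\ge 0}$ (from positivity of the kernel $k$), Theorem~\ref{02law} then yields strong convergence to the ergodic projection $\mathbb{P}f=\varrho_{f}\M$, and mean ergodicity gives $\X=\mathrm{Ker}(\L)\oplus\overline{\mathrm{Im}(\L)}$. So: right toolbox, but the load-bearing observation is boundedness of $\L$, not relative compactness of $K$.
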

Second, whenever $f \in \mathrm{Ker}(\L) \oplus \mathrm{Im}(\L)$, we can explicit the rate of convergence:
\begin{theo}\label{theo:introSoft1}
Under the assumptions of Theorem \ref{theo:introSoft}, given some nonnegative 
$$f \in \mathrm{Ker}(\L) \oplus \mathrm{Im}(\L),$$
then
 for any $c \in (0,1)$,  
$$\|V(t)f-\varrho_f\,\M\|_{\X}=\mathrm{O}\left(\vartheta_{\log}^{-1}(ct)\right) \qquad \text{ as } \quad t \to \infty$$
where $\vartheta_{\log}^{-1}$ is the inverse  of the increasing mapping  $\vartheta_{\log}\::\:r > 0 \mapsto \vartheta(r)\log \left( 1+\frac{\vartheta(r)}{r}\right)$  and
$$\vartheta(r)=\frac{1}{r}\dfrac{1}{1-\frac{\Sigma _{\max }}{\sqrt{r^{2}+\Sigma _{\max
}^{2}}}}, \qquad \Sigma_{\max}=\sup_{v\in \R^{d}}\Sigma(v).$$
In particular, for any $\varepsilon > 0$, there exists $C=C(f,\varepsilon) > 0$ such that
$$\|V(t)f-\varrho_f \M\|_{\X} \leq C\left(1+t\right)^{-\frac{1}{3+\varepsilon}} \qquad \forall t \geq 0.$$
\end{theo}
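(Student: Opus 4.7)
The plan is to reduce the statement to the quantitative ergodic convergence theorem for perturbed substochastic semigroups in $L^{1}$ established in \cite{mmk1}. Under the assumption $f \in \mathrm{Ker}(\L)\oplus\mathrm{Im}(\L)$ (the \emph{algebraic} direct sum), one may write $f = \varrho_{f}\M + \L h$ for some $h \in \D(\L)$, so that $V(t)f - \varrho_{f}\M = V(t)\L h$ and the task reduces to quantifying $\|V(t)\L h\|_{\X}$. The theorem of \cite{mmk1} translates such a decay into a quantitative resolvent estimate along the imaginary axis: if one can prove
\[
\|(iu-\L)^{-1}\|_{\mathscr{B}(\X)} \leq \vartheta(|u|) \qquad \text{for every } u \neq 0,
\]
with $\vartheta$ as in the statement, then $\|V(t)\L h\|_{\X} = \mathrm{O}(\vartheta_{\log}^{-1}(ct))$ for every $c \in (0,1)$.

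The heart of the proof is therefore to establish this precise resolvent bound. I would proceed from the Dyson identity
\[
(iu-\L)^{-1} = (iu-T)^{-1}\sum_{n \geq 0}\bigl[K(iu-T)^{-1}\bigr]^{n},
\]
which is legitimate as soon as $K(iu-T)^{-1}$ is a strict contraction on $\X$. Since $T$ is the multiplication generator, the pointwise identity $[(iu-T)^{-1}\varphi](v) = \varphi(v)/(iu+\Sigma(v))$ gives $|[(iu-T)^{-1}\varphi](v)| \leq |\varphi(v)|/\sqrt{u^{2}+\Sigma(v)^{2}}$ and in particular $\|(iu-T)^{-1}\|_{\mathscr{B}(\X)} = 1/|u|$. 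Combining this pointwise bound with the integral representation \eqref{eq:Kkerne} and the mass-conservation marginal $\int_{\R^{d}}k_{B}(v,w)\d v = \Sigma(w)$ (coming from $\int \L f\,\d v = 0$), Fubini's theorem yields
\[
\|K(iu-T)^{-1}\varphi\|_{\X} \leq \int_{\R^{d}}\frac{\Sigma(w)|\varphi(w)|}{\sqrt{u^{2}+\Sigma(w)^{2}}}\d w \leq \frac{\Sigma_{\max}}{\sqrt{u^{2}+\Sigma_{\max}^{2}}}\|\varphi\|_{\X},
\]
where the monotonicity of $s \mapsto s/\sqrt{u^{2}+s^{2}}$ is used in the last step. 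This contraction factor is strictly less than $1$ whenever $u \neq 0$, so the Neumann series converges; summing the geometric factor produces precisely $\|(iu-\L)^{-1}\|_{\mathscr{B}(\X)} \leq \vartheta(|u|)$, and as a by-product one obtains $i\R \setminus \{0\} \subset \rho(\L)$, which is a standing hypothesis of the abstract ergodic theorem.

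Feeding this resolvent bound into \cite{mmk1} delivers the convergence rate $\vartheta_{\log}^{-1}(ct)$ announced in the first half of Theorem \ref{theo:introSoft1}. The algebraic corollary then follows from a short asymptotic inversion: as $r \to 0^{+}$, $1 - \Sigma_{\max}/\sqrt{r^{2}+\Sigma_{\max}^{2}} \sim r^{2}/(2\Sigma_{\max}^{2})$, hence $\vartheta(r) \sim 2\Sigma_{\max}^{2}\,r^{-3}$ and $\vartheta_{\log}(r) \sim 8\Sigma_{\max}^{2}\,r^{-3}\,|\log r|$, which inverts to $\vartheta_{\log}^{-1}(t) \sim C(\log t/t)^{1/3}$ for large $t$; this is dominated by $t^{-1/(3+\varepsilon)}$ for any $\varepsilon > 0$. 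The delicate point in the whole argument is the sharp contraction estimate for $K(iu-T)^{-1}$: the naive operator-norm bound $\|K\|_{\mathscr{B}(\X)}\cdot\|(iu-T)^{-1}\|_{\mathscr{B}(\X)} \leq \Sigma_{\max}/|u|$ exceeds $1$ for small $u$ and is therefore useless. The refined bound crucially exploits the substochastic marginal structure $\int k_{B}(v,w)\d v = \Sigma(w)$ of the gain operator in $L^{1}$, which is precisely the mechanism that the framework of \cite{mmk1} is designed to leverage.
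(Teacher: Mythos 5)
Your resolvent estimate is exactly the paper's Proposition \ref{prop:spectR}, proved by the same mechanism: the Neumann series for $(iu-\L)^{-1}$, the pointwise bound $|(iu+\Sigma(v))^{-1}| \le (u^2+\Sigma(v)^2)^{-1/2}$, and the $L^1$ marginal identity $\int_{\R^d} k(v,w)\,\d v = \Sigma(w)$ to make $K(iu-T)^{-1}$ a strict contraction. That part is right, and you correctly identify why the crude product bound $\|K\|\cdot\|(iu-T)^{-1}\| \le \Sigma_{\max}/|u|$ is useless.

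However, the abstract theorem that converts the resolvent bound on $i\R\setminus\{0\}$ into the decay rate $\vartheta_{\log}^{-1}(ct)$ is \emph{not} in \cite{mmk1}. The reference \cite{mmk1} gives only the non-quantitative ergodic convergence of Theorem \ref{theo:introSoft} via Greiner's $0$--$2$ law; it contains no rate. What the paper actually invokes is \cite[Corollary 2.12]{Chill}, a quantified version of Ingham's theorem, which yields $\|V(t)\L R(1,\L)\|_{\mathscr{L}(\X)} = \mathrm{O}(\vartheta_{\log}^{-1}(ct))$. Moreover, that corollary has a hypothesis you do not check: the semigroup must be (asymptotically) analytic. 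The paper closes this gap by observing that $\L$ is a \emph{bounded} operator on $\X$ (Proposition \ref{propo:bounded}), hence $(V(t))_{t\ge0}$ is analytic, a fortiori asymptotically analytic. This is not a cosmetic point: without some regularity assumption on the semigroup a resolvent bound on the imaginary axis does not by itself produce a decay rate (this is the content of the counterexamples surrounding Ingham/Batty--Duyckaerts theory). Once you replace \cite{mmk1} by \cite{Chill} and add the analyticity remark, your argument matches the paper's; the asymptotic inversion $\vartheta(r)\sim 2\Sigma_{\max}^2 r^{-3}$ giving the algebraic rate $(1+t)^{-1/(3+\varepsilon)}$ is handled the same way in both.
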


Again, the above result asserts that \eqref{eq:BE} is well-posed in the semigroup sense and that the associated solutions converge  towards the unique equilibrium with same mass as the initial datum. The proof of the above result is a  consequence of a general result  concerning the strong convergence to  ergodic projection for perturbed substochastic semigroups \cite{mmk1}. Notice that such a result relies on the so-called ``0-2'' law for $C_{0}$-semigroups by G. Greiner \cite[p. 346]{arendt} for the non-quantitative version \eqref{eq:vtf}. To get a quantitative convergence rate, we resort to recent results about non-exponential convergence rate for semigroups \cite{Chill} related to \emph{Ingham's theorem} (see also \cite{batty1,batty2}). Notice that, because of the absence of a spectral gap for $\L$, we do not expect any exponential relaxation to equilibrium.

\subsection{Related  literature}

As already mentioned, we investigate here both the cases of hard and soft potentials. As well-documented, at least in an Hilbert setting, these two cases exhibit very different behaviour and usually require different methods of investigation. Let us describe here the known results regarding the large time behavior of solutions to \eqref{eq:BE}. A first general comment is that, due to its paramount importance for the study of close-to-equilibrium solutions to the  nonlinear Boltzmann equation, the \emph{linearized} Boltzmann equation received much more attention than the \emph{linear} one. Let us recall here that the operator associated to the linearized Boltzmann equation is given by
$$\mathscr{L}f=\Q(f,\M)+\Q(\M,f)$$
whereas we recall that $\L f=\Q(f,\M).$ One sees in particular that both the operators are expected to share many spectral properties. However, two very important differences have to be emphasised: 
\begin{enumerate}
\item First, while the linear operator $\L$ admits a single equilibrium state with unit mass given by $\M$, the linearized operator $\mathscr{L}$ is such that
$$\mathscr{L}(\M)=\mathscr{L}(v_{i}\,\M)=\mathscr{L}(|v|^{2}\,\M)=0, \qquad \forall i=1,\ldots,d.$$
This means that, in any reasonable functional space, $0$ is a \emph{simple} eigenvalue of $\L$ with $\mathrm{Ker}(\L)=\mathrm{span}(\M)$ while the kernel of $\mathscr{L}$ is $d+2$-dimensional:
$$\mathrm{Ker}(\mathscr{L})=\mathrm{span}\{\M\,v_{i}\,\M\,\,|v|^{2}\,\M\,;\,i=1,\ldots,d\}.$$
\item Second and more important for the analysis we shall perform here, the  semigroup associated to the linear Boltzmann operator $\L$ is positive whereas the  one associated to $\mathscr{L}$ is not. This is very natural from the physical point of view since equation \eqref{eq:BE} is aimed to describe the evolution particles interacting with an host medium while the evolution equation associated to $\mathscr{L}$ aims to describe the \emph{fluctuation around the equilibrium} of the solution to the nonlinear Boltzmann equation. As already said,  we will take much benefit of the fact that the semigroup generated by $\L$ is positive.
\end{enumerate}

Let us now describe in more details the existing results devoted to the large time behaviour of solution to \eqref{eq:BE}. We shall distinguish here the two cases of hard and soft potentials.\medskip

$\bullet$ As far as hard potentials are concerned, it is by now well understood that, in the Hibert space 
\begin{equation}\label{eq:H}
\mathcal{H}=L^{2}(\R^{d},\M^{-1}(v)\d v)\end{equation}
the linear Boltzmann operator (with its natural domain) is negative, self-adjoint. This means that
$$\mathscr{E}(f,f):=-\int_{\R^{d}}\,f\,\L f\, \M^{-1}\d v \geq 0 \qquad \forall f \in \D(\L) \subset \mathcal{H}.$$
Moreover, $0$ is a simple eigenvalue associated to the eigenfunction $\M$ and there exists $\lambda_{2}>0$ such that
$$\mathscr{E}(f,f) \geq \lambda_{2} \|f\|_{\mathcal{H}} \qquad \forall f \in \D(\L), \qquad f \bot \M$$
where of course the orthogonality is meant with respect to the natural inner product of $\mathcal{H}.$ 

Clearly, this proves that $\L$ admits a spectral gap in $\mathcal{H}$ of size at least $\lambda_{2}.$ Since $\L$ is self-adjoint in $\mathcal{H}$, this spectral result for the generator $\L$ translates ``for free'' to the associated $C_{0}$-semigroup $\left(\exp(t\L)\right)_{t \geq 0}$ yielding an exponential trend to equilibrium with rate $\exp(-\lambda_{2}t).$ 
The first proof of such a result can be traced back to D. Hilbert himself \cite{hilbert}. General proof of such an exponential trend to equilibrium are available in the literature. Historically, the first proof is based upon Weyl's Theorem and consists in showing that the non-local operator $K$ is a compact perturbation of the multiplication operator $T$, ensuring that the essential spectrum of $\L$ coincide with that of $T$ (see \cite{grad, hilbert}). Only recently, such an approach has been made quantitative and explicit estimate of the spectral gap $\lambda_{2}$ have been provided \cite{LMT} for hard-spheres interactions. 

The above approach seems at first sight purely hilbertian and not well-adapted to the study of \eqref{eq:BE} in  spaces of physical interest like $L^{1}$-spaces. In such spaces indeed, the problem has received much less attention. We mention here the seminal work of Suhadolc \cite{suha} who introduced the spectral study of $\L$ for hard-spheres interactions in weighted spaces like
$$L^{1}(\R^{d},\exp(\alpha |v|^{2})\d v).$$
In particular, it can be proven then that, in such a space, $\L$ is the generator of a positive semigroup. More recently, a careful study of the \emph{linearized Boltzmann operator} has been performed in weighted $L^{1}$-spaces with stretched exponential weights \cite{Mo}. Such an approach is based upon the knowledge of the spectral properties of the linearized operator in the Hilbert space $\mathcal{H}$ combined with recent results concerning the stability of the spectrum in enlarged functional spaces \cite{gualdani}. This approach has been applied recently for the linear Boltzmann operator with hard-spheres interactions in \cite{BCL} and provides quantitative estimates of the spectral gap in weighted $L^{1}$-spaces. Such results are presented in the Appendix \ref{sec:appB} of the paper for the sake of completeness. We also mention  the recent result \cite{BCL2} in which the exponential convergence of the solutions to \eqref{eq:BE} is measured in \emph{relative entropy} and an explicit convergence rate is provided by means of entropy/entropy production estimate.\medskip

$\bullet$ The  linear Boltzmann equation associated to cut-off soft potentials received much less attention. In the seminal paper \cite{caflisch}, the \emph{linearized} operator has been studied in the space $\mathcal{H}$ defined by \eqref{eq:H}. Adapting the argument of \cite{caflisch} to the linear operator $\L$, one checks easily that, in this case, the linear operator $\L$ is still self-adjoint and negative in $\H$ but then its continuous spectrum extends to the origin so that $\L$ has \emph{no spectral gap} in the space $\mathcal{H}.$ It is then clear that, at the semigroup level, no exponential convergence towards equilibrium can occur. We mention here that, in \cite{caflisch}, almost exponential convergence to equlibrium is obtained for the linearized Boltzmann for initial datum belonging to the subspace of $\H$ made of functions bounded above by some suitable Maxwellian (see Remark \ref{nb:caflisch} for more details). Let us mention here that, using argument similar to those of Theorem \ref{theo:introSoft1}, it is possible to obtain algebraic rate of convergence (like $t^{-1}$) for the linear Boltzmann operator in the space $\H$ (see Remark \ref{nb:caflisch}). To our knowledge, the only convergence results available for the linear Boltzmann equation are all based upon entropy inequalities \cite{pettersson} and consists in showing that, since the relative entropy (with respect to the equilibrium solution $\M$) is continuously decreasing along the solutions to \eqref{eq:BE} and since $\M$ is the only equilibrium solution (with unit mass), a variant of classical LaSalle invariance principle implies the convergence of $f(t,v)$ towards $\M$. Such an approach requires at least that the initial state $f_{0}$ is of finite relative entropy, i.e. 
\begin{equation}\label{entropy}
\int_{\R^{d}}f_{0}(v)\log\left(\frac{f_{0}(v)}{\M(v)}\right)\d v < \infty\end{equation}
which, roughly speaking, would correspond  to convergence of $f(t,v)$ in some suitable Orlicz-space $L\log L.$

\subsection{Organization of the paper} After this long introduction, the paper is organized as follows. Section \ref{sec:remind} contains all the material from semigroup theory and spectral analysis needed for the rest of the paper. The results presented there are not new but maybe some of them are not very well-known in the kinetic community. We emphasise in particular the crucial role played by positivity in our analysis. Section \ref{sec:hard} is devoted to the study of the linear BE in the case of hard potentials and is culminating with the proof of Theorem \ref{theo:introHard} following the above steps i)--iii) here above. Section \ref{sec:soft} deals with the convergence result for soft potential interactions and provides the proof of Theorem \ref{theo:introSoft}. In Appendix \ref{sec:appA}, we derive the expression of the kernel $k_{B}(v,w)$ for various type of interactions $B$ as well as several of their important properties. The material of this Appendix is the basic toolbox for the proof of the weak compactness of $K$ used in the proof of Theorem \ref{theo:introHard}. Finally, in Appendix \ref{sec:appB}, we provide some quantitative estimates of the spectral gap $\lambda_{\ast}$ appearing in Theorem \ref{theo:introHard} using some of the results established recently in \cite{BCL} which use the enlargement and factorisation techniques of \cite{gualdani}.

\section{Reminder about positive semigroups in $L^1$-spaces}\label{sec:remind}

We review here some known result about the asymptotic properties of stochastic semigroups as $t \to \infty$ in $L^{1}$-spaces. We will focus here only on the  asymptotic stability of positive semigroups, i.e. the convergence towards equilibrium density. For such questions, one should distinguish between two important cases: a first one for which the existence of spectral gap implies exponential convergence towards equilibrium; a second one in which the generator does not exhibit a spectral which is related to ergodic theorem. We begin with general properties of positive semigroups in $L^{1}$-spaces.

\subsection{Spectral
properties of semigroups in $L^1$} Let us consider a Borel measure $\mu$ over some given space $\Omega$ and let $X=L^1(\Omega,\d\mu).$ Given a nonnegative mapping
$$M\::\:\Omega \to \mathbb{R}$$
the multiplication operator
$$A\::\:\D(A) \subset X \to X$$
defined by
$$Af(x)=-M(x)f(x) \quad \text{ for any } \quad f \in \D(A)=\left\{f \in X\,;\,M f \in X\right\}$$ is the generator of a \emph{positive} $C_0$-semigroup $\ut$ in $X$ given by
$$[U(t)f](x)=\exp(-M(x)t)f(x) \qquad \forall t \geq 0.$$
\begin{nb}
The semigroup is positive in the sense that it preserves the positive cone of $X$, i.e. $f \in X$, $f \geq 0$ $\mu$-a.e. implies $U(t)f \geq 0$ $\mu$-a.e.
\end{nb}
\begin{nb}
Notice that, if equipped with the graph norm $\|f\|_A:=\|f\| + \|Af\|$, the space $\D(A)$ can be identified with $L^1(\Omega,(1+M(x))\d\mu(x)).$
\end{nb}
General result about semigroup theory ensures that, as a multiplication operator, the spectrum of $A$ is given by the so-called essential range of $-M$, i.e.
$$\mathfrak{S}(A)=\left\{\lambda\in \mathbb{C}\,;\,\mu\left(\left\{x \in \Omega\,,\,|\lambda+M(x)| < \varepsilon \right\}\right)\neq 0 \forall \varepsilon >0\right\}$$
and, if $M(\cdot)$ is continuous, one actually has
$$\mathfrak{S}(A)=\mathrm{Range}(-M)\subset \R_{-}.$$
Since, for any $t \geq 0,$ $U(t)$ is also a multiplication operator, its spectrum is given by the essential range of $\exp(-M(\cdot) t)$ and, in particular,
$$\mathfrak{S}(U(t))=\overline{\exp\left(\mathfrak{S}(A)t\right)} \qquad \forall t \geq 0$$
where the closure is meant of course in $\mathbb{C}$. In particular, if we define
$$M_{0}:=\mu-\text{ess-inf}_{x \in \Omega} M(x),$$
then, then one has
$$\mathfrak{S}(A) \subset (-\infty,-M_{0}]$$
and the type of the semigroup $\ut$ is less than $-M_{0}$. Recall the type of a semigroup $\ut$ is defined as
$$\omega_0(U)=\inf\{\omega \in \mathbb{R}\,,\,\exists C \geq 1\,\,\|U(t)\| \leq C \exp(\omega t) \qquad \forall t\geq 0\}.$$

A fundamental property of positive semigroups in $L^{1}$-spaces is the following (see \cite{engel})
\begin{propo}\label{propo:sLomega}
If $X=L^{1}(\Omega,\mu)$ and   $(S(t))_{t\geq 0}$ is a positive \com of $X$ then its type coincide with the spectral bound of its generator $G$, i.e. $$\omega_{0}(S)=s(G).$$\end{propo}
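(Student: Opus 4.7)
The inequality $s(G) \leq \omega_{0}(S)$ holds for every $C_{0}$-semigroup, so the content of the proposition is the reverse inequality. The plan is to fix an arbitrary real $\lambda > s(G)$ and to prove that $\omega_{0}(S) < \lambda$; letting $\lambda$ tend to $s(G)$ then concludes. The bridge between the spectral bound and the growth bound will be Datko's theorem, which asserts that a $C_{0}$-semigroup $(T(t))_{t \geq 0}$ on a Banach space satisfying $\int_{0}^{\infty}\|T(t)g\|\,\d t < \infty$ for every $g$ has negative growth bound. Applying this to the rescaled semigroup $T(t):=e^{-\lambda t}S(t)$, it suffices to establish
$$
\int_{0}^{\infty} e^{-\lambda t}\,\|S(t)f\|_{X}\,\d t < \infty \qquad \text{for every } f \in X.
$$

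The key step exploits positivity together with the defining $L^{1}$-feature that $\|g\|_{X} = \int_{\Omega}g\,\d\mu$ for every $g \geq 0$. Fix $f \geq 0$ and set $I_{a}f:=\int_{0}^{a}e^{-\lambda t}S(t)f\,\d t$. A direct computation yields $(\lambda - G)I_{a}f = f - e^{-\lambda a}S(a)f$, whence
$$
I_{a}f = R(\lambda,G)f - e^{-\lambda a}\,R(\lambda,G)S(a)f.
$$
Since $\lambda > s(G)$ is real and $(S(t))_{t \geq 0}$ is positive, the resolvent $R(\lambda,G)$ is itself a positive operator (a classical fact about positive semigroups on Banach lattices), so the subtracted term is nonnegative and hence $0 \leq I_{a}f \leq R(\lambda,G)f$ pointwise $\mu$-a.e. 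Integrating over $\Omega$ and applying Tonelli's theorem gives
$$
\int_{0}^{a} e^{-\lambda t}\,\|S(t)f\|_{X}\,\d t = \|I_{a}f\|_{X} \leq \|R(\lambda,G)f\|_{X} < \infty,
$$
and letting $a \to \infty$ yields the desired bound for nonnegative $f$. Extension to arbitrary $f \in X$ is immediate by decomposing $f$ into its real/imaginary and positive/negative parts; one then obtains $\int_{0}^{\infty}e^{-\lambda t}\|S(t)f\|_{X}\,\d t \leq 4\,\|R(\lambda,G)\|\,\|f\|_{X}$. Datko's theorem closes the argument.

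The main obstacle is really the positivity of the resolvent $R(\lambda,G)$ for real $\lambda > s(G)$. This is where the Banach-lattice structure of $L^{1}(\Omega,\d\mu)$ enters crucially: on general Banach spaces the strict inequality $s(G) < \omega_{0}(S)$ does occur, and it is precisely the combination of positivity with the specifically $L^{1}$-feature that the norm equals the integral on the positive cone which forces equality in the present setting.
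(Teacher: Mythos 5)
Your proof is correct; in fact the paper states this proposition without proof, citing it to Engel--Nagel \cite{engel}, and the argument you give (positivity of $R(\lambda,G)$ on $(s(G),\infty)$, the $L^1$-identity $\|I_af\|=\int_0^a e^{-\lambda t}\|S(t)f\|\,\d t$ for $f\geq 0$, then Datko--Pazy with $p=1$) is exactly the canonical proof of that theorem. One tiny cosmetic point: for complex $f\in L^1$ the constant in your final display can be taken to be $2$ rather than $4$, since $\|f^+\|+\|f^-\|=\|f\|$ on the real part; but Datko's theorem does not need a uniform bound at all, only finiteness for each $f$, so this has no bearing on correctness.
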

 
Let us consider now the influence of unbounded perturbations. Namely, consider now a positive (integral) operator $B \::\:\D(A) \subset X \to X$ and assume that, for any $\lambda > -M_{0}$,
$$BR(\lambda,A)\::\:X \to X \qquad \text{ is a weakly compact operator in } X$$
where  $R(\lambda,A)=(\lambda-A)^{-1}$ is the resolvent of $A$ which exists at least for $\lambda > -M_{0}$. In such a case, even if $B$ is not a bounded operator, one knows 'for free' that the sum $A+B$ is the generator of a positive \com in $X$. Precisely, one has:
\begin{theo}{\cite[Theorem 6]{MMK}}\label{theommk}  If $B$ is a positive operator such that $BR(\lambda,A)$ is a weakly compact operator in $X$ then, $T=A+B$ (with domain $\D(A)$) is the generator of a \com $\vt$ in $X$. Moreover, the essential spectrum of $\vt$ is such that
$$\mathfrak{S}_{\mathrm{ess}}(U(t))=\mathfrak{S}_{\mathrm{ess}}(V(t)) \qquad \forall t \geq 0$$
where we adopted here the notion of essential spectrum due to Schechter.
\end{theo}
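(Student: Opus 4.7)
The plan has two components: the generation statement and the essential spectrum identity.

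For the generation part, the natural candidate is the Dyson--Phillips expansion
\[
V(t)f = \sum_{n=0}^{\infty} V_n(t)f, \qquad V_0(t) = U(t), \qquad V_{n+1}(t)f = \int_0^t U(t-s)\,B\,V_n(s)f\,\d s,
\]
initially defined on $f \in \D(A)_+$. Positivity of $U(t)$ and $B$ makes the partial sums increasing in $n$, so by monotone convergence in the $L^{1}$-setting it is enough to derive a uniform bound. I would proceed by approximating $B$ by the bounded positive operators $B_k = k\,B\,R(k, A)$ which agree with $B$ in the limit $k \to \infty$ on $\D(A)$. For each $k$, Phillips' bounded perturbation theorem yields a positive $C_0$-semigroup $V^{(k)}(t)$ generated by $A + B_k$. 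The weak compactness of $BR(\lambda, A)$ feeds into the Neumann series
\[
R(\lambda, A+B_k) = R(\lambda, A) \sum_{n=0}^\infty \bigl(B_k R(\lambda, A)\bigr)^n
\]
to control the resolvents uniformly in $k$. Passing to the monotone limit yields a positive pseudo-resolvent satisfying the Hille--Yosida bounds; the delicate point is to show that the corresponding generator is exactly $A+B$ on $\D(A)$ rather than a proper extension (no ``dishonesty'' phenomenon), which ultimately relies on the integrability encoded by weak compactness.

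For the essential spectrum identity, the decisive structural property is the \emph{Dunford--Pettis property} of $X = L^{1}(\Omega, \mu)$: the composition of any two weakly compact operators on an $L^{1}$-space is norm-compact. Consequently, $\bigl(BR(\lambda, A)\bigr)^2$ is a compact operator for every $\lambda$ in the resolvent set of $A$. Via the Laplace relation
\[
R(\lambda, A)\bigl(BR(\lambda, A)\bigr)^n = \frac{1}{n!}\int_0^\infty e^{-\lambda t}\,V_n(t)\,\d t,
\]
this compactness should transfer to the time-dependent iterates: $V_n(t)$ is compact for every $n \geq 2$ and $t > 0$, while $V_1(t)$ is at worst weakly compact. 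Summing, $V(t) - U(t) - V_1(t)$ is compact, and a Vidav--Voigt type argument (or a direct appeal to the invariance of Schechter's essential spectrum under compact and, thanks again to Dunford--Pettis, under weakly compact perturbations on $L^1$) gives $\mathfrak{S}_{\mathrm{ess}}(V(t)) = \mathfrak{S}_{\mathrm{ess}}(U(t))$.

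The hardest part will be transferring compactness from the stationary object $BR(\lambda, A)$ to the time-dependent operator $V(t) - U(t)$ while simultaneously ensuring that the domain of the generated semigroup is exactly $\D(A)$. Both issues are addressed by the monotone approximation together with the Dunford--Pettis property; the exchange of limits in $k$ and $n$ must be performed carefully, and the subtle ``honesty'' question for positive perturbations in $L^{1}$ is precisely what singles out the weak compactness hypothesis, as opposed to mere $A$-boundedness or classical Miyadera-type conditions, as the right assumption.
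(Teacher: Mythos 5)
This statement is not proved in the paper: it is a theorem imported verbatim from \cite{MMK} (Mokhtar-Kharroubi, ``New generation theorems in transport theory'', Afr.\ Mat.\ 2011, Theorem~6) and simply cited. There is therefore no ``paper's own proof'' to compare against; what you have done is reconstruct a plausible proof sketch of the \emph{external} reference. Your overall architecture (Dyson--Phillips series, monotone approximation by the bounded Yosida regularisations $B_{k}=kBR(k,A)$, Dunford--Pettis property of $L^{1}$ to promote weak compactness to compactness of the square, then invariance of the Schechter essential spectrum) is indeed the correct circle of ideas and matches the strategy of the Mokhtar-Kharroubi generation theorems.

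Two points in your sketch need tightening. First, the Neumann series $R(\lambda,A+B_{k})=R(\lambda,A)\sum_{n}\bigl(B_{k}R(\lambda,A)\bigr)^{n}$ requires $r_{\sigma}\bigl(B_{k}R(\lambda,A)\bigr)<1$; you do not explain where this comes from. Weak compactness forces $\bigl(BR(\lambda,A)\bigr)^{2}$ to be compact and hence makes the essential spectral radius of $BR(\lambda,A)$ zero, but this does not preclude an eigenvalue of modulus at least one. In \cite{MMK} this is handled by a separate subcriticality-type assumption (the paper's Theorem~\ref{02law} makes this condition $\lim_{\lambda\to\infty}r_{\sigma}(BR(\lambda,A))<1$ explicit in the different but related soft-potential setting); your proposal silently assumes it. Second, the step where you pass from compactness of $R(\lambda,A)\bigl(BR(\lambda,A)\bigr)^{n}$ to compactness of the time-dependent iterate $V_{n}(t)$ via the Laplace identity does not work by ``inverting'' the transform: the Laplace transform of an $\mathscr{L}(X)$-valued function can be compact-valued without the original function being compact-valued. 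The standard route (Vidav, Voigt) establishes instead that, once $\bigl(BR(\lambda,A)\bigr)^{2}$ is compact, some \emph{remainder} $R_{m}(t)=\sum_{j\geq m}U_{j}(t)$ of the Dyson--Phillips series is norm-continuous in $t>0$ and then deduces $r_{\mathrm{ess}}(V(t))=r_{\mathrm{ess}}(U(t))$ by a comparison of essential types rather than by term-by-term compactness. Your conclusion is right; the mechanism as written is not.
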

\begin{nb} Since $X$ is an  $L^{1}$-spaces, for an (un)bounded operator $T\::\:\D(T) \subset X \to X$ the essential spectrum of $S$ is exactly the part of the spectrum left invariant by weakly-compact operators, i.e.
$$\mathfrak{S}_{\mathrm{ess}}(T)=\bigcap_{K \in \mathscr{W}(X)} \mathfrak{S}(T+K)$$ 
where $\mathscr{W}(X)$ is the ideal of $\mathscr{B}(X)$ made of the weakly compact operators in $X$. We refer to \cite{latrach} for more details on
that matter.\end{nb}
\begin{nb} The semigroup $\vt$ in the above theorem is given by a Dyson-Phillips series
$$V(t)=\sum_{j=0}^\infty U_j(t),\qquad t \geq 0$$
where $U_0(t)=U(t)$ is the unperturbed semigroup and, for any $j \geq 0$,
\begin{equation}\label{dyson}
U_{j+1}(t)=\int_0^t U_j(t-s)BU_0(s) \d s.\end{equation}
\end{nb}

\begin{nb}Since $B$ is a weakly-compact perturbation of $A$, one has
$\mathfrak{S}_\mathrm{ess}(B)=\mathfrak{S}_\mathrm{ess}(A) \subset(-\infty,-M_{0}]$
and
$$\mathfrak{S}(B)\subset(-\infty,-\eta] \cup \{\lambda_k\}_k$$
where $(\lambda_k)_k$ is a sequence of eigenvalues of $B$ (maybe finite or empty). In particular, $\exp(\lambda_k t)$ is an eigenvalue of $V(t)$ for any $t \geq 0$ and any $k$.
\end{nb}

\subsection{Exponential trend to equilibrium in presence of a spectral gap} We begin here with a general result which holds for any \com in some Banach space $X$.
\begin{theo}{\cite[Theorem 3.7, Chapter V]{engel}}\label{theo:engel}
Let $\vt$ be a \com with generator $A$ in some Banach space $X$ such that 
$$\omega_{\mathrm{ess}}(V) < 0 $$
where $\omega_{\mathrm{ess}}(V)$ denotes the essential type of $\vt$.
Then the  set $\{\lambda \in \mathfrak{S}(A)\,;\,\mathrm{Re}\lambda \geq 0\}$  is finite (or empty) and consists of isolated eigenvalues $\lambda_{1},\ldots,\lambda_{m}$  of $A$ with finite algebraic multiplicity. If $\mathbb{P}_{1},\ldots,\mathbb{P}_{m}$ denote the corresponding spectral projections and $k_{1},\ldots,k_{m}$ the corresponding orders of poles of $R(\cdot,T)$ then
\begin{equation}\label{eq:vtrm}
V(t)= V_{1}(t) + \ldots + V_{m}(t) + R_{m}(t), \qquad t \geq 0\end{equation}
where
$$V_{n}(t)=\exp(\lambda_{n}t)\sum_{j=0}^{k_{n}-1}\dfrac{t^{j}}{j!}\left(A-\lambda_{n}\right)^{j}\mathbb{P}_{n} \qquad n=1,\ldots,m$$
and there exists $\delta > 0$ and $C \geq 1$ such that
$$\|R_{m}(t)\|_{\mathscr{B}(X)} \leq C \exp(-\delta t) \qquad \forall t \geq 0.$$
\end{theo}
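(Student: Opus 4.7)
The plan is to reduce the analysis to the bounded operator $V(1)$ and exploit that $r_{\mathrm{ess}}(V(1)) = \exp(\omega_{\mathrm{ess}}(V)) < 1$, then transfer the resulting spectral decomposition back to the generator $A$. Throughout the argument I would fix some auxiliary $\delta \in (0,-\omega_{\mathrm{ess}}(V))$ and work at the level of $V(1)$.

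Since $r_{\mathrm{ess}}(V(1)) < e^{-\delta} < 1$, standard Fredholm theory ensures that the part of $\mathfrak{S}(V(1))$ lying in $\{z \in \mathbb{C} \,:\, |z| \geq e^{-\delta}\}$ consists of at most finitely many isolated eigenvalues of finite algebraic multiplicity. Combined with the spectral mapping theorem for the point spectrum of $C_{0}$-semigroups, this translates into the existence of finitely many eigenvalues $\lambda_{1},\ldots,\lambda_{m}$ of $A$ with $\mathrm{Re}\,\lambda_{n} \geq 0$, each of finite algebraic multiplicity, which is the first half of the statement. The associated Riesz projections
\[\mathbb{P}_{n} = \frac{1}{2\pi i}\oint_{\Gamma_{n}} R(z,A)\,\d z,\]
built on small positively oriented circles $\Gamma_{n}$ isolating each $\lambda_{n}$ from the rest of $\mathfrak{S}(A)$, are mutually disjoint, commute with $V(t)$, and have finite rank equal to the algebraic multiplicity of $\lambda_{n}$. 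They produce a direct sum decomposition $X = X_{1} \oplus \cdots \oplus X_{m} \oplus Y$ where $X_{n} := \mathrm{Ran}(\mathbb{P}_{n})$ is finite-dimensional, $Y := \bigcap_{n} \mathrm{Ker}(\mathbb{P}_{n})$ is closed and $V$-invariant, and $A$ restricts on $X_{n}$ to $\lambda_{n}I + N_{n}$ with $N_{n} := (A - \lambda_{n})\mathbb{P}_{n}$ nilpotent of order exactly $k_{n}$ (the pole order of $R(\cdot,A)$ at $\lambda_{n}$). Finite-dimensional matrix exponentiation then yields $V(t)\mathbb{P}_{n} = V_{n}(t)$ precisely in the form stated.

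The main obstacle is the exponential bound on the remainder $R_{m}(t) := V(t)|_{Y}$, because in a general Banach space the spectral bound $s(A)$ need not coincide with the growth bound $\omega_{0}(V)$, and therefore $s(A|_{Y}) < 0$ alone would not suffice. The crucial observation is that $\mathfrak{S}(V(1)|_{Y}) \subset \{z \in \mathbb{C} \,:\, |z| \leq e^{-\delta}\}$: the essential part of $\mathfrak{S}(V(1))$ lies in this disk by the very choice of $\delta$, while all peripheral eigenvalues $\{e^{\lambda_{n}}\}_{n=1}^{m}$ have been removed by the projection $I - \sum_{n}\mathbb{P}_{n}$. Gelfand's formula then gives $r(V(1)|_{Y}) \leq e^{-\delta}$, and combining this with the semigroup property and the uniform boundedness of $V(t)$ on $[0,1]$ yields the desired estimate $\|R_{m}(t)\|_{\mathscr{B}(X)} \leq C e^{-\delta' t}$ for every $\delta' \in (0,\delta)$, completing the decomposition \eqref{eq:vtrm}.
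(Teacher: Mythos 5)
The paper does not supply a proof of this statement; it is quoted verbatim from Engel--Nagel (Theorem V.3.7), so there is no in-paper argument to compare against. Your sketch reproduces the standard proof strategy used there: pass to the single bounded operator $V(1)$, observe that $r_{\mathrm{ess}}(V(1))=e^{\omega_{\mathrm{ess}}(V)}<1$ and invoke Riesz--Schauder theory for the peripheral spectrum, transfer the finitely many isolated eigenvalues back to $A$ via the spectral mapping theorem for the point spectrum (plus the pole-order comparison between $R(\cdot,A)$ and $R(\cdot,V(1))$), decompose $X$ via the Riesz projections, compute $V(t)\mathbb{P}_{n}$ by finite-dimensional exponentiation, and control the remainder by Gelfand's formula on the complementary invariant subspace. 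So the route is the right one.

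There is, however, one inaccuracy in the last step. You fix $\delta\in(0,-\omega_{\mathrm{ess}}(V))$ at the outset and then claim $\mathfrak{S}(V(1)|_{Y})\subset\{|z|\le e^{-\delta}\}$ on the grounds that the essential spectrum lies in that disk and that ``all peripheral eigenvalues $\{e^{\lambda_{n}}\}$ have been removed.'' But the projections $\mathbb{P}_{n}$ only remove the eigenvalues of $A$ with $\mathrm{Re}\,\lambda_{n}\ge 0$, hence only the eigenvalues of $V(1)$ of modulus $\ge 1$. There may well be finitely many eigenvalues of $V(1)$ in the annulus $e^{-\delta}<|z|<1$ (arising from eigenvalues of $A$ with $-\delta\le\mathrm{Re}\,\lambda<0$); these survive in $\mathfrak{S}(V(1)|_{Y})$, so your containment, and consequently the asserted decay rate ``for every $\delta'\in(0,\delta)$,'' is too strong. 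The correct argument is slightly weaker but suffices: since $\mathfrak{S}(V(1)|_{Y})$ is compact, contained in the open unit disk, and its only possible accumulation set is $\mathfrak{S}_{\mathrm{ess}}(V(1))\subset\{|z|\le r_{\mathrm{ess}}\}$, one gets $r(V(1)|_{Y})=e^{-\delta_{0}}$ for some $\delta_{0}>0$, and then Gelfand plus local boundedness gives $\|R_{m}(t)\|\le C e^{-\delta' t}$ for any $\delta'<\delta_{0}$, which is exactly what the theorem claims (existence of some $\delta>0$, not all $\delta$ up to $-\omega_{\mathrm{ess}}(V)$).
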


The above result is important in particular when the above set $\{\lambda \in \mathfrak{S}(A)\,;\,\mathrm{Re}\lambda \geq 0\}$ reduces to a singleton $\lambda_{1}$ which becomes then the dominant eigenvalue of $T$. In such a case, the conclusion reads
$$\|V(t)-V_{1}(t)\| \leq C\exp(-\delta t) \qquad \forall t \geq 0.$$
If moreover $\lambda_{1}$ is a simple eigenvalue, i.e. $k_{1}=1$, then the rescaled semigroup $\exp(-\lambda_{1}t)V(t)$ converges exponentially fast towards a rank-one projection. Sufficient conditions for this to holds are explicit whenever $X$ is a Banach lattice and when the \com $\vt$ is positive. For simplicity, we state here our result in the case in which $X$ is some $L^{1}$-space:

\begin{propo}\label{prop:positive} Assume $X=L^{1}(\Omega,\mu)$ where $(\Omega,\mathcal{F})$ is a measure space and $\mu$ a given measure over $(\Omega,\mathcal{F}).$ Let $\vt$ be a positive \com with generator $A$ in $X$ satisfying the assumption of Theorem \ref{theo:engel}. Then, there exists $\varepsilon > 0$, 
$$\mathfrak{S}(A) \cap \{z \in \mathbb{C}\,;\,\mathrm{Re}z \geq s(A)-\varepsilon\}=\{s(A)\}$$
where $s(A)$ is an isolated eigenvalue of $A$ with finite algebraic multiplicity $k_{1}$ and the splitting \eqref{eq:vtrm} holds for $m=1$ and any $s(A)-\varepsilon < \delta < s(A)$. 
\end{propo}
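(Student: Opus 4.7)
The plan is to combine three classical ingredients for positive $C_{0}$-semigroups on the Banach lattice $L^{1}(\Omega,\mu)$: Proposition \ref{propo:sLomega} (which identifies $\omega_{0}(V)$ with $s(A)$), the general spectral decomposition provided by Theorem \ref{theo:engel}, and Greiner's cyclicity theorem for the peripheral spectrum of a positive semigroup on a Banach lattice.

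First, I would observe that by Proposition \ref{propo:sLomega}, $s(A)=\omega_{0}(V)\geq\omega_{\mathrm{ess}}(V)>-\infty$, so $s(A)$ is finite. The classical positivity theory for $C_{0}$-semigroups on Banach lattices then ensures that $s(A)\in\mathfrak{S}(A)$. Rescaling the semigroup by $\exp(-s(A)t)$ shifts the generator to $A-s(A)\,\mathrm{Id}$ while preserving positivity, so one may assume $s(A)=0$, provided $\omega_{\mathrm{ess}}(V)<s(A)$; this inequality is automatic in the situations of interest (in particular whenever $s(A)\geq 0$, as is the case for the mass-preserving semigroups arising from the linear Boltzmann equation). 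Applying Theorem \ref{theo:engel} to the rescaled semigroup then yields that $\mathfrak{S}(A)\cap\{\mathrm{Re}\,z\geq s(A)\}$ is a finite set of isolated eigenvalues of finite algebraic multiplicity, containing $s(A)$; in particular, the peripheral spectrum $\sigma_{\mathrm{per}}:=\mathfrak{S}(A)\cap\{\mathrm{Re}\,z=s(A)\}$ is finite.

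Next, I would invoke Greiner's cyclicity theorem: for a positive $C_{0}$-semigroup on a Banach lattice whose spectral bound $s(A)$ is a pole of the resolvent, the translated set $\sigma_{\mathrm{per}}-s(A)$ is an additive subgroup of $i\R$. Since this subgroup is finite by the previous step, it must reduce to $\{0\}$, and therefore $\sigma_{\mathrm{per}}=\{s(A)\}$. Because $s(A)$ lies strictly outside the essential spectrum, it is an isolated point of $\mathfrak{S}(A)$, and one can choose $\varepsilon>0$ small enough so that $\mathfrak{S}(A)\cap\{z\in\mathbb{C}\,;\,\mathrm{Re}\,z\geq s(A)-\varepsilon\}=\{s(A)\}$.

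Finally, the splitting \eqref{eq:vtrm} with $m=1$ follows directly from Theorem \ref{theo:engel}: one obtains $V(t)=V_{1}(t)+R_{1}(t)$ with $V_{1}$ the spectral-projection part associated to $\lambda_{1}=s(A)$ of pole order $k_{1}$, and $\|R_{1}(t)\|_{\mathscr{B}(X)}\leq C\exp(\delta t)$ for any $\delta\in(s(A)-\varepsilon,s(A))$ (after undoing the rescaling by $\exp(s(A)t)$). The main obstacle is the invocation of Greiner's cyclicity theorem, which provides the rigidity that forces the finite peripheral spectrum to collapse to the singleton $\{s(A)\}$; the remaining steps are careful bookkeeping based on Theorem \ref{theo:engel} together with the $L^{1}$-specific identity $\omega_{0}(V)=s(A)$.
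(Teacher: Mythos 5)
The paper itself gives no proof of Proposition~\ref{prop:positive} --- it is stated as a known consequence of the general theory, with Remark~\ref{nb:cyclic} pointing to Greiner's cyclicity result as the relevant ingredient --- so the comparison is with the standard argument. Your proof is that standard argument, and it is essentially correct. A few comments on the places where you should be more careful.

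First, you are right to flag that the literal hypothesis ``satisfying the assumption of Theorem~\ref{theo:engel}'' (i.e.\ $\omega_{\mathrm{ess}}(V)<0$) is not quite what is needed: the inequality $\omega_{\mathrm{ess}}(V)<s(A)$ is the real hypothesis, and without it the conclusion can fail. For instance, for the pure multiplication semigroup on $L^{1}(\R)$ generated by $Af=-(1+|x|)f$ one has $\omega_{\mathrm{ess}}(V)=s(A)=-1<0$, yet $s(A)$ is not an isolated eigenvalue. As you note, this is automatic whenever $s(A)\geq 0$, which covers the uses in the paper (where $s(\L)=0$ is proved separately). So your rescaling step is legitimate provided you make this strict inequality the working hypothesis.

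Second, what Greiner's theorem delivers on $L^{p}$ is that $\sigma_{\mathrm{per}}-s(A)$ is \emph{imaginary additively cyclic} (if $i\beta$ lies in it, so does $in\beta$ for all $n\in\mathbb{Z}$), not literally an additive subgroup of $i\R$. This is a harmless imprecision, since cyclicity together with the finiteness coming from Theorem~\ref{theo:engel} already forces $\sigma_{\mathrm{per}}-s(A)=\{0\}$: a nonzero $i\beta$ would generate an infinite cyclic orbit inside a finite set. Note also that on $L^{p}$ the cyclicity statement does not require irreducibility or the pole hypothesis you quote (that hypothesis enters for general Banach lattices, or for the sharper conclusion $\sigma_{\mathrm{per}}=s(A)+\kappa i\mathbb{Z}$ as recalled in Remark~\ref{nb:cyclic}); but since Theorem~\ref{theo:engel} already makes $s(A)$ a pole of the resolvent, nothing is lost.

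Third, the final $\varepsilon$-gap statement is slightly compressed. To produce $\varepsilon>0$ with $\mathfrak{S}(A)\cap\{\mathrm{Re}\,z\geq s(A)-\varepsilon\}=\{s(A)\}$ you should apply Theorem~\ref{theo:engel} (or equivalently the quasi-compactness of $e^{-\omega t}V(t)$) at some intermediate threshold $\omega\in(\omega_{\mathrm{ess}}(V),s(A))$; then $\mathfrak{S}(A)\cap\{\mathrm{Re}\,z\geq\omega\}$ is finite, the peripheral part is $\{s(A)\}$ by the cyclicity argument, and the remaining eigenvalues have real part strictly below $s(A)$, so an $\varepsilon$ smaller than that gap works, and the splitting~\eqref{eq:vtrm} with $m=1$ and any such $\delta$ follows. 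With these clarifications, your proof is sound and follows the route indicated by the paper.
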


Again, the above can be made more precise if the semigroup is irreducible. We recall that, in $L^{1}(\Omega,\mu)$, a \com $\vt$ is said to be irreducible if, given $f \in X$, $\phi \in X^{\star}$, $f, \phi > 0$ $\mu$-a. e., then there exists $t_{0} \geq 0$ such that
$$\langle V(t_{0})f\,,\,\phi\rangle_{X,X^{\star}} > 0$$
where $\langle \cdot,\cdot\rangle_{X,X^{\star}}$ denotes the duality bracket between $X$ and its dual space $X^{\star}=L^{\infty}(\Omega,\mu)$. 

One has then the following:
\begin{propo}{\cite[C.III. Proposition 3.5, p. 310]{arendt}}\label{aren}
Assume $X=L^{1}(\Omega,\mu)$ where $(\Omega,\mathcal{F})$ is a measure space and $\mu$ a given measure over $(\Omega,\mathcal{F}).$ Let $\vt$ be a \emph{irreducible} \com with generator $A$ in $X$. Then the following holds
\begin{enumerate}
\item Every positive eigenvector $f$ of $A$ is a quasi-interior point, i.e. $f > 0$ $\mu$-a. e.
\item Every positive eigenvector of the adjoint $A^{\star}$ is strictly positive.
\item If $\mathrm{Ker}(s(A)-A^{\star})$ contains a positive eigenelement, then $\mathrm{dim}(\mathrm{Ker}(s(A)-A)) \leq 1.$
\item If $s(A)$ is a pole of the resolvent of $A$, then it has algebraic multiplicity one. The corresponding spectral projection is then of the form $\Pi=\phi \otimes f$ where $\phi \in X^{\star}$ is a positive eigenvector of $A^{\star}$, $f \in X$ is a positive eigenvector of $A$ and $\langle u,\phi \rangle_{X,X^{\star}}=1$.
\end{enumerate}\end{propo}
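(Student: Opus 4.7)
The preparatory observation is that the semigroup-level irreducibility upgrades to a resolvent-level statement: for every $\mu > s(A)$, since $R(\mu,A) = \int_{0}^{\infty}e^{-\mu t}V(t)\,\d t$ and $V(t)$ is positive, irreducibility yields $\langle R(\mu,A)g,\phi\rangle > 0$ for every non-zero $g \in X_{+}$ and every non-zero $\phi \in X_{+}^{\star}$; since $\phi$ is arbitrary, $R(\mu,A)g$ is then a quasi-interior point of $X_{+}$. This single fact drives the rest of the argument.

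Item (1) is immediate: if $Af = \lambda f$ with $f \in X_{+}\setminus\{0\}$, then the resolvent identity $f = (\mu-\lambda)R(\mu,A)f$ for $\mu$ large enough exhibits $f$ as a positive multiple of a quasi-interior element. Item (2) is symmetric, applied to the adjoint semigroup whose resolvent is $R(\mu,A)^{\star}$, and is granted by the same manipulation for $A^{\star}\phi = \lambda\phi$.

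For item (3) I fix a strictly positive $\phi \in X^{\star}$ with $A^{\star}\phi = s(A)\phi$, granted by (2). If $\dim \mathrm{Ker}(s(A)-A) \ge 2$, then a real-valued non-zero $f \in \mathrm{Ker}(s(A)-A)$ with $\langle f,\phi\rangle = 0$ can be selected. Decomposing $f = f^{+}-f^{-}$ with $f^{+}\wedge f^{-}=0$ and exploiting positivity of $V(t)$, a short lattice computation based on $V(t)f = e^{s(A)t}(f^{+}-f^{-})$ together with $V(t)f^{\pm}\ge 0$ yields $V(t)f^{\pm} \ge e^{s(A)t}f^{\pm}$. Testing against $\phi$ and using $V(t)^{\star}\phi = e^{s(A)t}\phi$ converts both into equalities, because the a.e.\ strict positivity of $\phi$ forces any non-negative function pairing trivially with it to vanish. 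Thus $f^{\pm}$ are themselves eigenvectors; by (1) each is either zero or quasi-interior, and $f^{+}\wedge f^{-}=0$ excludes both being quasi-interior, forcing $f = 0$.

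For item (4), the leading Laurent coefficient of $R(\cdot,A)$ at the pole $s(A)$ is a spectral projection $\Pi$, which is positive since $\Pi = \lim_{\mu\downarrow s(A)}(\mu-s(A))R(\mu,A)$ and $R(\mu,A) \ge 0$ for $\mu > s(A)$. By (3) the eigenspace $\mathrm{Ker}(s(A)-A)$ is one-dimensional, spanned by a quasi-interior $f$. A Jordan block of size $\ge 2$ would produce some $h$ in the generalised eigenspace with $(s(A)-A)h = f$; pairing with $\phi$ gives $0 = \langle h,(s(A)-A)^{\star}\phi\rangle = \langle f,\phi\rangle > 0$, a contradiction. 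Hence $\Pi$ is of rank one, and writing $\Pi = \psi\otimes f$ for some $\psi \in X^{\star}$, the identities $\Pi^{2}=\Pi$ and $A\Pi = s(A)\Pi = \Pi A$ force $\psi$ to be a positive eigenvector of $A^{\star}$ at $s(A)$ normalised by $\langle f,\psi\rangle = 1$, giving $\Pi = \phi\otimes f$. The most delicate step in the chain is item (3): its success hinges on promoting the super-eigenvector inequality $V(t)f^{\pm}\ge e^{s(A)t}f^{\pm}$ to an equality, and this is precisely where the a.e.\ strict positivity of $\phi$ (itself a non-trivial consequence of irreducibility through (2)) plays the decisive role.
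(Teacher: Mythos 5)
The paper does not prove this statement; Proposition~\ref{aren} is quoted verbatim from Arendt \emph{et al.}~\cite{arendt}, so there is no in-text proof to compare against. Evaluating your reconstruction on its own merits: the preliminary upgrade from semigroup-level irreducibility to the quasi-interior property of $R(\mu,A)g$ for $\mu > s(A)$ (via the Laplace representation of the resolvent together with strong continuity) is exactly the right lever, and items (1), (2), (3) are handled correctly by the standard Perron--Frobenius lattice argument. In particular your treatment of (3) --- deriving $V(t)f^{\pm} \geq e^{s(A)t}f^{\pm}$ and then forcing equality by pairing with a strictly positive $\phi \in \mathrm{Ker}(s(A)-A^{\star})$ --- is sound, because in (3) such a $\phi$ exists by hypothesis and (2) makes it strictly positive.

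The gap is in item (4) and it is substantive. First, you identify the spectral projection $\Pi$ with the leading Laurent coefficient and write $\Pi = \lim_{\mu\downarrow s(A)}(\mu-s(A))R(\mu,A)$. This identity holds only when the pole is simple, which is precisely what you are asked to prove; for a pole of order $k>1$ the displayed limit diverges, and what inherits positivity from $R(\mu,A)\geq 0$ is the leading coefficient $L := \lim_{\mu\downarrow s(A)}(\mu-s(A))^{k}R(\mu,A) = (A-s(A))^{k-1}\Pi$, not the residue $\Pi$. Second, you invoke item (3) and the Jordan-chain pairing with $\phi$ without having exhibited a positive element of $\mathrm{Ker}(s(A)-A^{\star})$: that is a hypothesis of (3), not something granted in (4). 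The standard repair is to extract $\phi$ from $L$: since $L \geq 0$ and $L \neq 0$, also $L^{\star} \geq 0$, $L^{\star}\neq 0$; moreover $L(A-s(A))=0$ on $\D(A)$, hence $(s(A)-A^{\star})L^{\star}=0$ and $\mathrm{Range}(L^{\star}) \subset \mathrm{Ker}(s(A)-A^{\star})$. Picking any $\psi\in X^{\star}$, $\psi\geq 0$, with $L^{\star}\psi\neq 0$ produces the required positive $\phi$, after which (2), (3) and your Jordan-chain pairing argument force $k=1$ and $\dim\mathrm{Range}(\Pi)=1$, and your formula for $\Pi$ becomes valid \emph{a posteriori}.
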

\begin{nb}\label{nb:cyclic}
Notice that, if $\vt$ is irreducible with generator $A$, then there exists $\kappa \geq 0$ such that
$$\left\{\lambda \in \mathfrak{S}(A)\,;\,\mathrm{Re}\lambda=s(A)\right\}=s(A)+\kappa\,i\,\mathbb{Z}.$$
We refer to \cite[Theorem 1.12, Chapter VI]{engel} for further details.
\end{nb}

\subsection{Convergence to equilibrium without spectral gap}  
In the above section, we described the situation in which the generator of the semigroup admits a spectral gap and its consequences on the exponential convergence. In this section, we describe a completely different situation in which no spectral gap estimate is available. In such a case, we do not expect the semigroup to converge exponentially to some ``equilibrium''. We present here some recent result, due to the second author, which provides in $L^{1}$-spaces sufficient condition for the semigroup to converge as time goes to infinity in absence of spectral gap. Such result will be then applied to the BE for soft potential in the last part of the paper.

\begin{theo}\label{02law}
Let $(U(t))_{t \geq 0}$ be a substochastic \com  in $L^{1}(\Omega,\mu)$ with generator $A$ and let 
$$B\::\:\D(A) \to L^{1}(\Omega,\d\mu)$$
be positive and satisfy 
$$\lim_{\lambda \to +\infty}\,r_{\sigma}\left(B(\lambda-A)^{-1}\right) < 1 \qquad \text{ and } \quad \int_{\Omega} Af+Bf \d\mu \leq 0 \qquad \forall f \in \D(B)\,;\, f \geq 0$$
where $r_{\sigma}(\cdot)$ denotes the spectral radius. Let $(V(t))_{t \geq 0}$ be the substochastic \com generated by
$$T=A+B\::\:\D(T)=\D(A) \to L^{1}(\Omega,\d\mu).$$
We assume that $(V(t))_{t \geq 0}$ is irreducible and that $\mathrm{Ker}(T)\neq\{0\}$. If
$$\lim_{t \to 0}\|U(t)f\|_{L^{1}(\Omega,\mu)}=0 \qquad \forall f \in L^{1}(\Omega,\mu)$$
and if there exists $m\in \mathbb{N}$ such that the mapping
$$t \geq 0 \mapsto R_{m}(t)=\sum_{j=m}^{\infty}U_{j}(t) \in \mathscr{B}(L^{1}(\Omega,\d\mu))$$ is continuous in operator norm (where $(U_{j}(t))_{j}$ has been defined in \eqref{dyson}), then
$$\lim_{t \geq 0} V(t)f=\mathbb{P}f$$  
where $\mathbb{P}$ denotes the ergodic projection on $\mathrm{Ker}(A).$
\end{theo}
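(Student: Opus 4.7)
The plan is to apply Greiner's \emph{0--2 law} for positive $C_{0}$-semigroups on $L^{1}$-spaces \cite[p.\,346]{arendt}, by using the Dyson--Phillips expansion of $\vt$ to transfer the operator-norm continuity of its tail $R_{m}(\cdot)$ into an asymptotic norm-continuity of the whole semigroup.

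First, the Miyadera-type hypothesis $\lim_{\lambda\to+\infty}r_{\sigma}(B(\lambda-A)^{-1})<1$ combined with the dissipativity estimate $\int_{\Omega}(Af+Bf)\d\mu\leq 0$ places us exactly in the Kato--Voigt framework: $T=A+B$ with $\D(T)=\D(A)$ generates a substochastic $C_{0}$-semigroup $\vt$ on $L^{1}(\Omega,\d\mu)$, given by the Dyson--Phillips series $V(t)=\sum_{j\geq 0}U_{j}(t)$ with the $U_{j}$'s defined by \eqref{dyson}. Positivity of each $U_{j}(t)$ together with substochasticity of $\vt$ then yield the uniform bound $\sum_{j\geq 0}\|U_{j}(t)f\|_{L^{1}}\leq\|f\|_{L^{1}}$ for every $f\geq 0$.

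Second, I would prove by induction on $j$ that $\lim_{t\to\infty}\|U_{j}(t)f\|_{L^{1}}=0$ for every $f\in L^{1}(\Omega,\d\mu)$: the base case $j=0$ is the hypothesis, and the inductive step exploits the recursion $U_{j+1}(t)f=\int_{0}^{t}U(t-s)BU_{j}(s)f\,\d s$ together with the uniform bound above and a dominated convergence argument (splitting the integral at an intermediate time $T$, handling $[0,T]$ by the inductive hypothesis and pointwise convergence, and bounding the tail $[T,t]$ uniformly through the substochasticity of the full Dyson--Phillips sum). Writing $V(t)=\sum_{j=0}^{m-1}U_{j}(t)+R_{m}(t)$, this yields $\lim_{t\to\infty}\|V(t)f-R_{m}(t)f\|_{L^{1}}=0$ for every $f$.

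Third, combining this asymptotic equivalence with the operator-norm continuity of $R_{m}(\cdot)$ gives
$$\lim_{s\to 0^{+}}\limsup_{t\to\infty}\|V(t+s)f-V(t)f\|_{L^{1}}=0 \qquad\forall f\in L^{1}(\Omega,\d\mu),$$
which rules out the ``$2$'' alternative in the 0--2 law for the positive contraction $C_{0}$-semigroup $\vt$ on $L^{1}$; the remaining ``$0$'' alternative yields $\lim_{t\to\infty}\|V(t+s)f-V(t)f\|_{L^{1}}=0$ for every $f$ and every $s\geq 0$. Finally, irreducibility of $\vt$ together with $\mathrm{Ker}(T)\neq\{0\}$ and Proposition \ref{aren} imply that the ergodic projection $\mathbb{P}$ onto $\mathrm{Ker}(T)$ is well-defined and of rank one; a standard mean-ergodic argument (whose Ces\`aro means converge to $\mathbb{P}f$) combined with the asymptotic norm-continuity from the previous step upgrades mean convergence to the desired strong convergence $V(t)f\to\mathbb{P}f$ as $t\to\infty$.

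\smallskip\noindent\textbf{Main obstacle.} The critical point is step three: converting operator-norm continuity of the \emph{tail} $R_{m}(\cdot)$ into enough regularity of $\vt$ itself to preclude the ``$2$'' alternative of the 0--2 law. Substochasticity of the low-order Dyson--Phillips terms is essential to keep their contribution uniformly controlled, and the delicate interplay between their strong decay (from step two) and the norm-continuity of $R_{m}$ must be handled carefully to avoid asymptotic oscillations of size $2$.
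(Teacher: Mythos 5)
The paper does not actually prove Theorem \ref{02law}; it is stated as an imported result from \cite{mmk1}, and the surrounding text only indicates that the proof there rests on Greiner's 0--2 law \cite[p.\,346]{arendt}. So there is no in-paper proof to compare against; I can only assess your outline on its own terms.

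Your strategy is consistent with the route the paper attributes to \cite{mmk1}, and you correctly read the hypothesis $\lim_{t\to 0}\|U(t)f\|=0$ as a misprint for $\lim_{t\to\infty}\|U(t)f\|=0$ (otherwise it would force $f=0$ by strong continuity). Step 2 is in essence correct; the point you need to make explicit is that $\int_{0}^{\infty}\|BU(s)f\|\,\d s\le\|f\|$ for $f\geq 0$, which follows from the conservation inequality: since $B$ is positive, $\|BU(s)f\|=\int_{\Omega}BU(s)f\,\d\mu\leq -\int_{\Omega}AU(s)f\,\d\mu=-\frac{\d}{\d s}\|U(s)f\|$, so integrating gives $\int_{0}^{t}\|BU(s)f\|\,\d s\leq\|f\|-\|U(t)f\|\leq\|f\|$. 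This integrability is what controls the tail $[T,t]$ in your splitting, together with $\|U_{j}(t)\|\leq 1$ (from $0\leq U_{j}(t)\leq V(t)$ on the positive cone). Note that you wrote the recursion as $U_{j+1}(t)=\int_{0}^{t}U(t-s)BU_{j}(s)\,\d s$ rather than the paper's $U_{j+1}(t)=\int_{0}^{t}U_{j}(t-s)BU(s)\,\d s$; the two define the same operators (both reduce to an integral of $U(a)BU(b)B\cdots BU(c)$ over the simplex $a+b+\cdots+c=t$), but the paper's form makes the inductive step cleaner because it requires integrability of $\|BU(\cdot)f\|$ only, not of $\|BU_{j}(\cdot)f\|$ for every $j$.

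The genuinely delicate point is the one you flag as the main obstacle, and your narrative for step 3 leaves it hanging: operator-norm continuity of $R_{m}$ on $[0,\infty)$ alone does not give $\limsup_{t\to\infty}\|R_{m}(t+s)-R_{m}(t)\|\to 0$ as $s\to 0^{+}$, since norm continuity is not uniform near infinity, and the estimate $\limsup_{t}\|V(t+s)f-V(t)f\|\leq\limsup_{t}\|R_{m}(t+s)-R_{m}(t)\|\|f\|$ is then not enough. The missing ingredient is the convolution identity for Dyson--Phillips tails:
\begin{equation*}
R_{m}(t+s)=V(t)R_{m}(s)+\sum_{j<m}R_{m-j}(t)U_{j}(s),
\end{equation*}
which follows from $U_{k}(t+s)=\sum_{i+j=k}U_{i}(t)U_{j}(s)$. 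Applied to a fixed $f$ this gives
\begin{equation*}
\|(R_{m}(t+s)-R_{m}(t))f\|\leq\|R_{m}(s)\|\,\|f\|+\|U(s)f-f\|+\sum_{1\leq j<m}\|U_{j}(s)f\|,
\end{equation*}
and the right-hand side is \emph{independent of $t$} and tends to $0$ as $s\to 0^{+}$ (using $R_{m}(0)=0$ and norm continuity of $R_{m}$ for the first term, and strong continuity for the others). This is the step that actually yields your claimed $\lim_{s\to 0^{+}}\limsup_{t\to\infty}\|V(t+s)f-V(t)f\|=0$ for each $f$; without it, the jump from norm continuity of $R_{m}$ to asymptotic equicontinuity of $V$ is unjustified. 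With this inserted, your step 4 (ruling out the oscillating alternative of the 0--2 law, then identifying the strong limit with the ergodic projection via mean ergodicity and Proposition \ref{aren}) is the expected conclusion.
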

\begin{nb}
Recall that, under the above hypothesis, $\mathrm{Ker}(A)=\mathrm{span}(\varphi)$ is one dimensional with $\varphi > 0$, $\mathrm{Ker}(A^{*})=\mathrm{span}(\varphi^{*})$ is one dimensional where $A^{*}$ denotes the adjoint of $A$ and $\varphi^{*} > 0$. In such a case, the \com $(V(t))_{t \geq 0}$ is mean ergodic with ergodic projection $\mathbb{P}$ given by
$$\mathbb{P}f =\left(\int_{\Omega} \varphi^{*}\,f \d\mu\right)\,\varphi \qquad \forall f \in L^{1}(\Omega,\d\mu)$$
where $\varphi^{*}$ and $\varphi$ are chosen such that $\int_{\Omega}\varphi^{*}\,\varphi\d\mu=1$ (see \cite{mmk1} for details).
\end{nb}

\section{Exponential trend to equilibrium for hard potentials}\label{sec:hard}

We consider in this section the case of hard-potentials and aim to prove Theorem \ref{theo:introHard}. We recall that we shall consider here a positive weight function $m=m(v)$ such that $m^{-1}(v) \geq 1$ for any $v \in \R^{d}$ and will work in the space
$$\X=L^1(m^{-1}(v)\d v).$$
The typical weights we shall consider are  exponential weights given by \eqref{expwe} and algebraic weights given by \eqref{algwe}. 

Since we are interested here in the properties of the linear BE in the space $\X$ for hard potential interactions, we shall consider the collision interactions
$$B(v-\vb,\sigma)=|v-\vb|^\gamma\,b(\cos \theta), \:\:\:\cos \theta=\left\langle \dfrac{v-\vb}{|v-\vb|};\n\right\rangle$$
with $b$ nonnegative satisfying the cut-off assumption \eqref{cutoff} and moreover $\gamma \geq 0.$ Introduce then
$$\Y=\Y_\gamma=L^1\left(\R^d\,;\,\left(1+|v| \right)^\gamma m^{-1}(v)\d v\right) \subset \X$$
and  let
$$T f(v)=\Sigma(v) f(v), \qquad \D(T)=\{f \in \X\;;\Sigma f \in \X\}=\Y_\gamma.$$
Clearly, $T$ generates a \com $(U(t))_{t \geq 0}$ in $\X$ defined by
$$U(t)f(v)=\exp(-\Sigma(v) t)f(v)\qquad t\geq 0,\:\:f \in \X.$$
The linear Boltzmann operator is then defined as
$$\L f(v)=Kf(v)-\Sigma(v)f(v)=\left(K+T\right)f(v), \qquad \forall f \in \D(T)$$
where the gain part $K$ is given by \eqref{gain}. 

A first very important observation is the following:
\begin{lemme}\label{lemme:app}
The linear operator $K$ depends continuously on $b \in L^{1}(\S)$: more precisely, if $(b_{n})_{n} \subset L^{1}(\S)$ is a sequence of kernels such that
$$\lim_{n\to \infty}\|b_{n}-b\|_{L^{1}(\S)}=\lim_{n \to \infty}\int_{\S}\left|b_{n}(\cos \theta)-b(\theta)\right|\d\sigma=0$$
then
$$\lim_{n \to \infty}\|K_{n}-K\|_{\mathscr{B}(\Y,\X)}=0$$
where $(K_{n})_{n}$ is the sequence of collision operators associated to $(b_{n})_{n}$.
\end{lemme}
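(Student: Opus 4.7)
By linearity, $K_n-K$ is nothing but the gain operator $K_{b_n-b}$ built from the kernel $b_n-b$; writing $K_\psi$ for the gain operator obtained by replacing $b$ with $\psi \in L^{1}(\S)$ in \eqref{gain}, the lemma reduces to the uniform continuity estimate
\begin{equation}\label{plan:key}
\|K_\psi f\|_\X \;\leq\; C\,\|\psi\|_{L^1(\S)}\,\|f\|_\Y, \qquad \forall\, f \in \Y,\ \forall\, \psi \in L^1(\S),
\end{equation}
for some constant $C=C(\gamma,d,m)$; applied to $\psi=b_n-b$ this immediately gives $\|K_n-K\|_{\mathscr{B}(\Y,\X)}\to 0$ as $n\to\infty$.

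The natural route to \eqref{plan:key} is the weak (pre/post-collisional) form of $Q^+$: for nonnegative $f$ and nonnegative test function $\phi$,
\[
\IR [K_\psi f](v)\,\phi(v)\,\d v \;=\; \IRR \IS |v-\vb|^\gamma\,\psi(\cos\theta)\,f(v)\,\M(\vb)\,\phi(\v')\,\d\sigma\,\d\vb\,\d v,
\]
obtained from the involutive change of variables $(v,\vb,\sigma)\leftrightarrow(\v',\vb',(v-\vb)/|v-\vb|)$. Using $|K_\psi f|\leq K_{|\psi|}|f|$ and taking $\phi=m^{-1}$, the problem reduces to transferring the factor $m^{-1}(\v')$ back to $m^{-1}(v)$. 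By energy conservation $|\v'|^2+|\vb'|^2 = |v|^2+|\vb|^2$, so $|\v'|\leq |v|+|\vb|$, and combining this with the subadditivity $(x+y)^{s}\leq x^{s}+y^{s}$ (valid for $s\in(0,1]$) in the exponential case \eqref{expwe}, or with $(x+y)^{\beta}\leq C_\beta(x^\beta+y^\beta)$ in the algebraic case \eqref{algwe}, one obtains
\[
m^{-1}(\v') \;\leq\; m^{-1}(v)\,\omega(\vb),
\]
where $\omega$ has at most sub-Gaussian growth, so that $\M(\vb)\,\omega(\vb)$ still decays like a Maxwellian.

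The angular integral is then handled by rotational invariance of $\d\sigma$: for any fixed $v\neq\vb$,
\[
\IS |\psi(\cos\theta)|\,\d\sigma \;=\; |\mathbb{S}^{d-2}|\int_{-1}^{1}(1-s^2)^{(d-3)/2}|\psi(s)|\,\d s \;=\; \|\psi\|_{L^1(\S)},
\]
independently of $v,\vb$; and the remaining $\vb$-integral satisfies $\int_{\R^d}|v-\vb|^\gamma\,\M(\vb)\,\omega(\vb)\,\d\vb \leq C(1+|v|)^\gamma$ thanks to $\gamma\geq 0$ and the Gaussian tail of $\M\omega$. Combining these ingredients yields \eqref{plan:key}, hence the lemma.

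The only genuinely delicate step in this plan is the weight transfer $m^{-1}(\v')\leq m^{-1}(v)\,\omega(\vb)$: it is exactly this step which forces the restriction $s\in(0,1]$ in \eqref{expwe}, since for $s>1$ subadditivity of $x\mapsto x^s$ fails and the energy bound $|\v'|\leq|v|+|\vb|$ would leave a residual growth in $|v|$ not absorbable by $\M(\vb)$. For soft potentials, where $m\equiv 1$, the transfer is trivial and the entire argument simplifies considerably.
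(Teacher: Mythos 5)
Your proposal is correct and follows essentially the same route as the paper: by linearity, reduce to a uniform bound $\|K_\psi f\|_\X\leq C\|\psi\|_{L^1(\S)}\|f\|_\Y$, apply the pre/post-collisional change of variables, and transfer the weight from $m^{-1}(v')$ to $m^{-1}(v)\,m^{-1}(\vb)$ via the energy conservation bound $|v'|^{2}\leq|v|^{2}+|\vb|^{2}$ combined with subadditivity of $r\mapsto r^{s}$ for $s\in(0,1]$ (resp. the elementary power inequality for the algebraic weight). Framing the estimate through the weak form with test function $\phi=m^{-1}$ rather than writing out the $L^1$ norm directly, and passing through $K_\psi$, is a minor presentational difference; the substance is identical.
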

\begin{proof} The proof relies on standard computations for the Boltzmann operator (see for instance \cite[Proposition 2.1]{Mo} for similar considerations). Let $\gamma \geq  0$ be given and let $(b_{n})_{n}\subset L^{1}(\S)$ be given.  Using the explicit expression \eqref{gain}, for any $n \in \mathbb{N}$ and any $f \in \Y$ it holds
$$
\|K_{n}f-Kf\|_{\X} \leq \int_{\R^{2d}\times \S}\left|B(v-\vb,\sigma)-B_{n}(v-\vb,\sigma)\right|\,|f(v')|\,\M(\vb')m^{-1}(v)\d v\d\vb\d\sigma$$
where we set $B_{n}(v-\vb,\sigma)=|v-\vb|^{\gamma}b_{n}(\cos\theta),$ $B(v-\vb,\sigma)=|v-v|^{\gamma}b(\cos\theta).$ Using the pre-post collisional change of variables $(v',\vb',\sigma) \to (v,\vb,\sigma)$ (see \cite[Chapter 1, Section 4.5]{villani}), we obtain
\begin{multline*}
\|K_{n}f-Kf\|_{\X} \leq \int_{\R^{2d}\times \S}\left|B(v-\vb,\sigma)-B_{n}(v-\vb,\sigma)\right|\,|f(v)|\,\M(\vb)m^{-1}(v')\d v\d\vb\d\sigma\\
= \int_{\R^{2d}}|b_{n}(\cos\theta)-b(\cos\theta)|\,|v-\vb|^{\gamma}|f(v)|\,\M(\vb)m^{-1}(v')\d v\d\vb\d\sigma\\
\leq \int_{\R^{d}}(1 +|v|)^{\gamma}\,|f(v)|\d v \int_{\R^{d}}\M(\vb)(1+|\vb|)^{\gamma}\d\vb\\
\int_{\S}\left|b_{n}(\cos\theta)-b(\cos \theta)\right|\,m^{-1}(v')\d\sigma
\end{multline*}
where we used that $|v-\vb|^{\gamma} \leq (1+|v|)^{\gamma}(1+|\vb|)^{\gamma}$ for any $v,\vb \in \R^{2d}.$ \\

Now, if $m$ is an exponential weight of type \eqref{expwe}, since $|v'|^{s}=(|v'|^{2})^{s/2}\leq (|v'|^{2}+|\vb'|^{2})^{s/2})=(|v|^{2}+|\vb|^{2})^{s/2} \leq |v|^{s}+|\vb|^{s}$ for any $v,\vb,\sigma$, we get that
$$m^{-1}(v') \leq m^{-1}(v)m^{-1}(\vb) \qquad \forall v,\vb,\sigma \in \R^{2d}\times \S$$
to that the above inequality reads
$$\|K_{n}f-Kf\|_{\X} \leq \|b_{n}-b\|_{L^{1}(\S)}\int_{\R^{d}} (1+|v|)^{\gamma}\,|f(v)|m^{-1}(v)\,\d v \int_{\R^{d}}\M(\vb) (1+|\vb|)^{\gamma}m^{-1}(\vb)\,\d\vb$$
and, since $\ds \int_{\R^{d}}\M(\vb) (1+|\vb|)^{\gamma}m^{-1}(\vb)\,\d\vb=C(m)< \infty$ we get that
$$\|K_{n}f-Kf\|_{\X}\leq C(m) \|b_{n}-b\|_{L^{1}(\S)}\,\|f\|_{\Y} \qquad \forall f \in \Y$$
which proves the result. \\

If the weight $m$ is an algebraic weight of type \eqref{algwe}, as above we see that, for any $v,\vb,\sigma \in\R^{2d}\times \S$,
$$m^{-1}(v')=1+|v'|^{\beta} \leq 1+\left(|v|^{2}+|\vb|^{2}\right)^{\beta/2} \leq 1+2^{\beta/2-1}\left(|v|^{\beta}+|\vb|^{\beta}\right)$$
so that
$$m^{-1}(v') \leq C_{\beta}\,m^{-1}(v)\,m^{-1}(\vb)$$
with $C_{\beta}=\max(1,2^{\beta/2-1})$. One concludes as above that 
$$\|K_{n}f-Kf\|_{\X}\leq C(\beta) \|b_{n}-b\|_{L^{1}(\S)}\,\|f\|_{\Y} \qquad \forall f \in \Y$$
for some positive constant $C(\beta)$ depending only on $\beta.$
\end{proof}

\begin{nb} As we shall see later on, since the angular collision kernel maybe approximated by some sequence of \emph{bounded} kernels over $\S$, to investigate the weak-compactness properties of $K$, it will be enough to assume $b$ to be bounded. \end{nb} 

 For simplicity, we denote $\mathbf{K}_\gamma$ the gain part associated to the \emph{variable hard-spheres} collision kernel $B_\gamma(v-\vb,\sigma)=|v-\vb|^\gamma,$ i.e.
$$\mathbf{K}_\gamma f(v)=\int_{\R^d \times \S} |\v-\vb|^\gamma f(\v')\M(\vb')\d\vb\d\sigma \qquad \forall f \in \Y$$
and, clearly, for any nonnegative $f$, one sees that
$$K f(v) \leq \|b\|_{L^\infty} \mathbf{K}_\gamma f(v) \qquad \forall v \in \R^d.$$
One has the following fundamental result
\begin{propo}\label{prop:comp} If the weight function $m$ are given by \eqref{expwe} or \eqref{algwe}
then, for any $\gamma \in [0,d-2]$,
$$\mathbf{K}_\gamma \::\:\Y \to \X$$
is a  positive weakly-compact operator.  \end{propo}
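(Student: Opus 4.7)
My plan is to establish weak compactness of $\mathbf{K}_{\gamma}\,:\,\Y \to \X$ via the Dunford--Pettis criterion: the image of a bounded subset of $\Y$ under $\mathbf{K}_{\gamma}$ must be equi-integrable and tight in $\X$. Positivity is immediate, since $\mathbf{K}_{\gamma}$ is represented by the nonnegative Carleman kernel $k_{\gamma}(v,w)$ given in \eqref{eq:Kkerne}, whose explicit form is derived in Appendix \ref{sec:appA}. The first reduction is to the critical exponent $\gamma=d-2$: the elementary inequality $|v-\vb|^{\gamma}\leq 1+|v-\vb|^{d-2}$ (valid for $\gamma\in[0,d-2]$) gives inside the integral the pointwise bound
\[
0 \leq \mathbf{K}_{\gamma} f \leq \mathbf{K}_{0} f + \mathbf{K}_{d-2} f, \qquad f \geq 0,
\]
and, together with the sub-multiplicativity of $m^{-1}$ established in the proof of Lemma \ref{lemme:app}, it suffices to treat the extreme cases $\gamma=0$ and $\gamma=d-2$. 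The operator $\mathbf{K}_{0}$, with constant radial cross-section, has a smooth and regular kernel for which equi-integrability follows from standard averaging arguments against $\M$.

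The main case is $\mathbf{K}_{d-2}$, for which the Carleman representation from Appendix \ref{sec:appA} provides the explicit form
\[
k_{d-2}(v,w) = \frac{C}{|v-w|} \int_{w+(v-w)^{\perp}} \M(y) \, \d\pi(y),
\]
where $\d\pi$ denotes the $(d-1)$-dimensional surface measure on the affine hyperplane $w+(v-w)^{\perp}$. This kernel features an integrable singularity $|v-w|^{-1}$ together with Gaussian decay in both $v$ and $w$ inherited from $\M$ along the hyperplane. The two Dunford--Pettis conditions reduce to the kernel estimates
\[
\lim_{R\to\infty}\sup_{w\in\R^{d}} (1+|w|)^{-(d-2)}\, m(w) \int_{|v|>R} k_{d-2}(v,w)\, m^{-1}(v) \d v = 0
\]
(tightness at infinity) and
\[
\lim_{|E|\to 0}\sup_{w\in\R^{d}} (1+|w|)^{-(d-2)}\, m(w) \int_{E} k_{d-2}(v,w)\, m^{-1}(v) \d v = 0
\]
(non-concentration), both of which I would verify by combining the Gaussian decay along the hyperplane with the weight-transfer provided by the sub-multiplicativity of $m^{-1}$ under the pre/post-collisional change of variables.

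The principal technical obstacle is precisely the uniformity in $w$ of these estimates: for large $|w|$, the Gaussian on the hyperplane $w+(v-w)^{\perp}$ must dominate the growth of the weight factor $(1+|w|)^{-(d-2)}\, m(w)$, while for bounded $|w|$ the mass of the kernel near $v=w$ must be tamed via $L^{1}$-integrability of the singularity $|v-w|^{-1}$ on compact subsets of $\R^{d}$. A delicate balance between Gaussian decay and polynomial or exponential weights is therefore required, and this balance is exactly what breaks down in the unweighted setting: as emphasized in Remark \ref{noweight}, weak compactness of $\mathbf{K}_{\gamma}$ genuinely fails when $m\equiv 1$, making the non-triviality of the weight $m$ structurally necessary in the argument.
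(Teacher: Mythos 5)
Your overall strategy --- positivity from the nonnegativity of the Carleman kernel, followed by Dunford--Pettis (equi-integrability plus tightness) applied to the kernel $k_\gamma$ --- is exactly the paper's, and the two displayed kernel estimates you write down for tightness and non-concentration are essentially conditions \eqref{DP11}--\eqref{DP22}. However, two of your intermediate steps break down and a third, which the paper cannot do without, is missing.

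First, the reduction to the extreme cases $\gamma=0$ and $\gamma=d-2$ via $\mathbf{K}_\gamma f\leq \mathbf{K}_0 f+\mathbf{K}_{d-2}f$ does not work at the operator level. The majorant $\mathbf{K}_{d-2}$ is naturally defined on $\Y_{d-2}=L^1((1+|v|)^{d-2}m^{-1}\d v)$, which is a \emph{proper} subspace of $\Y_\gamma$ when $\gamma<d-2$. By Proposition \ref{BCL}, $\int k_{d-2}(v,w)m^{-1}(v)\d v \sim (1+|w|)^{d-2-\alpha}m^{-1}(w)$, so $\mathbf{K}_{d-2}$ is not even bounded $\Y_\gamma\to\X$ once $\gamma<d-2-\alpha$. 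A domination argument for weak compactness requires the dominating operator to act boundedly on the same domain, so this reduction fails. The paper instead uses the \emph{kernel} bound $k_\gamma(v,w)\le |v-w|^{\gamma-(d-2)}k_{d-2}(v,w)$ inside the Dunford--Pettis integrals, carrying the extra factor $|v-w|^{\gamma-(d-2)}$ along and treating all $\gamma\in[0,d-2]$ simultaneously; this avoids the domain mismatch entirely.

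Second, the claim that $\mathbf{K}_0$ ``has a smooth and regular kernel'' is the opposite of the truth: $k_0(v,w)\lesssim |v-w|^{1-d}\times(\text{Gaussian factors})$, which is the \emph{strongest} singularity on the diagonal among all $\gamma\in[0,d-2]$ (it is borderline locally integrable in $\R^d$). The non-concentration estimate is hardest precisely at $\gamma=0$: in the paper's H\"older argument near $v=w$ one needs $p<\tfrac{d}{d-1-\gamma}$, and the admissible range shrinks to $p<\tfrac{d}{d-1}$ exactly at $\gamma=0$. So replacing the general $\gamma$ case by $\gamma=0$ would make the problem strictly harder, not easier.

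Third, for tightness at infinity in the regime $|w|>r/2$, ``Gaussian decay along the hyperplane plus sub-multiplicativity of $m^{-1}$'' is not enough. What is actually required is the quantitative estimate $H_\gamma(w)=\int k_\gamma(v,w)m^{-1}(v)\d v\le C(1+|w|^{\gamma-\alpha})m^{-1}(w)$ with $\alpha=s$ (exponential weight) or $\alpha=2$ (algebraic weight); this is Proposition \ref{BCL} and its proof requires a careful region-splitting ($3|w|\cos\theta\geq -2\varrho$ versus its complement) that is substantially more delicate than what you sketch. The factor $|w|^{\gamma-\alpha}/(1+|w|)^\gamma\to 0$ coming out of that estimate is exactly the mechanism that makes the non-trivial weight $m$ indispensable, in line with Remark \ref{noweight}, but you would need to identify and prove that lemma to close the argument.
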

\begin{proof} Recall that (see Appendix A)
$$\mathbf{K}_{\gamma} f (v)=\int_{\R^d} k_{\gamma}(v,w)f(w)\d w$$
where $k_\gamma(v,w) \leq |v-w|^{\gamma-(d-2)}k_{d-2}(v,w)$ and
$$k_{d-2}(v,w)=C|v-w|^{-1}\exp\left(-\frac{1}{8}\left[|v-w| + \frac{|v|-|w|}{|v-w|}\right]^2\right) \qquad \forall v,w \in \R^d \times \R^d.$$
 Let us prove the weak compactness property. Let $\d\nu(v)=m^{-1}(v)\d v$ and let $\mathcal{B}$ be the unit ball of $\Y$. Since $\X=L^1(\d\nu)$, according to Dunford-Pettis Theorem, the weak compactness of $\mathbf{K}_{d-2}$  amounts to prove that
\begin{equation}\label{DP1}
\sup_{f \in \mathcal{B}}\int_{A} \left|\mathbf{K}_{\gamma} f(v)\right|\d\nu(v) \longrightarrow 0 \quad \text{ as } \quad  \nu(A) \to 0
\end{equation}
and
\begin{equation}\label{DP2}
\sup_{f \in \mathcal{B}}\int_{|v| > r} \left|\mathbf{K}_{\gamma} f(v)\right|\d\nu(v) \longrightarrow 0 \quad \text{ as } \quad r \to \infty.
\end{equation}
Using the representation of $\mathbf{K}_{\gamma}$ as an integral operator, it is easy to check that \eqref{DP1} and \eqref{DP2} will follow if one is able prove that
\begin{equation}\label{DP11}
\sup_{w \in \mathbb{R}^d}\dfrac{m(w)}{(1+|w|)^{\gamma}}\int_{A} k_{\gamma}(v,w) m^{-1}(v)\d v \longrightarrow 0 \quad \text{ as } \quad \nu(A) \to 0
\end{equation}
and
\begin{equation}\label{DP22}
\sup_{w \in \mathbb{R}^d}\dfrac{m(w)}{(1+ |w|)^{\gamma}}\int_{|v| > r} k_{\gamma}(v,w)  m^{-1}(v)\d v  \longrightarrow 0 \quad \text{ as } \quad r \to \infty.
\end{equation}
Let us prove \eqref{DP11}. Let $A \subset \mathbb{R}^d$ be a given Borel subset and let $w \in \mathbb{R}^d$ be fixed. Let $B_w=\{v \in \mathbb{R}^d\;,\;|v-w| < 1\}.$ Since $k_{\gamma}(v,w)  \leq C |v-w|^{\gamma+1-d}$ one has
\begin{multline*}
\int_{A} k_{\gamma}(v,w)  m^{-1}(v)\d v \leq C \int_A |v-w|^{\gamma+1-d}\d\nu(v)\\
=C \left(\int_{A \cap B_w} |v-w|^{\gamma+1-d}\d\nu(v)+\int_{A \cap B_w^c}|v-w|^{\gamma+1-d}\d\nu(v)\right).
\end{multline*}
Clearly, since $\gamma+1-d \leq -1$
$$\int_{A \cap B_w^c}|v-w|^{\gamma+1-d}\d\nu(v) \leq \nu(A)$$
while, for any $p >1$, $1/q+1/p=1$, one has
$$
\int_{A \cap B_w} |v-w|^{\gamma+1-d}\d\nu(v) \leq \left(\int_{A \cap B_w} \d\nu(v)\right)^{1/q}\,\left(\int_{A \cap B_w} |v-w|^{p(\gamma+1-d)}m^{-1}(v)\d v\right)^{1/p}.$$
Now, if $m(v)=\exp(-a|v|^{s})$ for $a > 0$ and $s \in (0,1)$, one gets that $$m^{-1}(v) \leq m^{-1}(w) \exp(a|v-w|^s) \leq \exp(a) m^{-1}(w) \qquad \forall w \in \mathbb{R}^{d}, \qquad \forall v \in B_{w}$$
while, for algebraic weights, i.e. if $m(v)=(1+|v|)^{-\beta}$, $\beta > 0$ then 
\begin{multline*}
m^{-1}(v)=(1+|v-w+w|^{\beta}) \leq 1+2^{\beta-1}|v-w|^{\beta}+2^{\beta-1}|w| \\
\leq (1+2^{\beta-1}) (1+|w|^{\beta})
 \qquad \forall w \in \mathbb{R}^{d}, \quad v \in B_{w}.\end{multline*}
One sees from this that, in both cases, there exists some positive constant $C > 0$ such that
$$\int_{A \cap B_w} |v-w|^{p(\gamma+1-d)}m^{-1}(v)\d v
\leq \,C\,m^{-1}(w)\int_{B_w} |v-w|^{p(\gamma+1-d)}\d v.$$
Choosing now $1 \leq p < \frac{d}{d-\gamma-1}$, one sees that
$$\int_{B_w} |v-w|^{p(\gamma+1-d)}\d v=|\S| \int_0^1 \dfrac{\d\varrho}{\varrho^{1-d-p(1+\gamma-d)}}  $$
is finite and is independent of $w$. Thus, there exists $C=C(p) >0$ such that
$$\int_{A} k_{\gamma}(v,w) m^{-1}(v)\d v \leq C \left(m^{-\frac{1}{p}}(w)\nu(A)^{1/q}+\nu(A)\right) \qquad \forall w \in \mathbb{R}^3.$$
Since $p >1$, this proves that \eqref{DP11} holds true. Let us now prove \eqref{DP22}. One first notices that
\begin{equation}\label{DP22r}
\sup_{|w| \leq r/2}\dfrac{m(w)}{(1+|w|)^{\gamma}}\int_{|v| > r} k_{\gamma}(v,w)  m^{-1}(v)\d v  \longrightarrow 0 \quad \text{ as } \quad r \to \infty.
\end{equation}
Indeed, one notices that
$$k_{\gamma}(v,w)  \leq C\,|v-w|^{\gamma+1-d} \exp\left(\frac{1}{4}\left[|w|^2 -|v|^2\right]\right)$$
with $\gamma+1-d \leq -1$. Therefore, if $|w| \leq r/2$ and $|v| > r$, one gets
$$k_{\gamma}(v,w)  \leq C\left(\frac{r}{2}\right)^{\gamma+1-d}  \exp\left(-\tfrac{3}{16}|v|^2\right)$$
where we used that $|v| > 2|w|.$ Then, \eqref{DP22r} follows easily since, in both the considered cases,
$$\IR \exp\left(-\tfrac{3}{16}|v|^2\right)\d\nu(v) < \infty.$$
Consequently, to prove \eqref{DP22}, it is enough to show that
\begin{equation}\label{DP22rr}
\sup_{|w| > r/2}\dfrac{m(w)}{(1+ |w|)^{\gamma}}\int_{|v | > r} k_{\gamma}(v,w) m^{-1}(v)\d v  \longrightarrow 0 \quad \text{ as } \quad r \to \infty.\end{equation}
According to  Proposition \ref{BCL}, for both the considered weights, there exist $C_0 > 0$ and $\alpha > 0$ such that
$$\int_{\mathbb{R}^d} k_{\gamma}(v,w) m^{-1}(v)\d v \leq C_0 \left(1+|w|^{\gamma-\alpha}\right)m^{-1}(w)$$
with $\alpha=s$ if $m(v)=\exp(-a|v|^{s})$ $s \in (0,1]$ or $\alpha=2$ if $m(v)=(1+|v|^{-\beta})$, $\beta > 0.$
Therefore, for any $r >0$,
$$\dfrac{m(w)}{(1+|w|)^{\gamma}}\int_{|v| > r} k_{\gamma}(v,w) m^{-1}(v)\d v \leq C_0 \dfrac{1+|w|^{\gamma-\alpha}}{(1+|w|)^{\gamma}}$$
and \eqref{DP22rr} follows since  $ \sup_{|w| > r/2} \dfrac{1+|w|^{\gamma-\alpha}}{(1+|w|)^{\gamma}} \to 0$ as $r \to \infty.$ This achieves to prove that $\mathbf{K}_\gamma\::\:\Y \to \X$ is weakly-compact.\end{proof}
\begin{nb}\label{noweight} Notice that the above result \emph{does not hold true} without weight $m^{-1}$ (i.e. if $a=0$ or $\beta=0$). Indeed, it is shown in \cite[Proposition 3.22]{m2As} that
$$\mathbf{K}_{d-2}\::\:L^1(\R^d;(1+|v|)\d v) \to L^1(\R^d;\d v)$$
is not weakly compact. The same method shows that actually, $\mathbf{K}_\gamma$ is not weakly compact for $\gamma \in [0,d-2]$. Notice that, in the absence of weight, it is \eqref{DP22rr} which is violated while \eqref{DP11} and \eqref{DP22r} still hold true.
\end{nb}
\begin{nb} Notice also that almost all the above computations are still valid for $\gamma < 0.$ In particular, \eqref{DP22r} and \eqref{DP11} still hold true for $-d \leq \gamma < 0$. However, the final argument in our proof does not hold true for $\gamma < 0$, since clearly
$$\lim_{r \to \infty} \sup_{|w| > r/2} \dfrac{1+|w|^{\gamma-\alpha}}{(1+|w|)^{\gamma}} =\infty.$$
\end{nb}

\begin{cor} For any $\gamma \in (0,d-2]$ and any $b(\cdot) \in L^1(\S)$, let $K$ denote the collision operator associated to $B(v-\vb,\sigma)=|v-\vb|^{\gamma}b(\cos\theta)$ defined by \eqref{gain}. Then 
$$K\,R(\lambda,T)\::\;\X \to \X$$
is positive and weakly compact for any $\lambda > 0.$ 
\end{cor}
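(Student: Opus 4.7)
The positivity part is immediate: $K$ is an integral operator with nonnegative kernel, and since $\mathfrak{S}(T)\subset(-\infty,-\eta]$ with $\eta=\inf_v \Sigma(v)>0$, for any $\lambda>0$ the resolvent $R(\lambda,T)$ acts as multiplication by the positive function $(\lambda+\Sigma(v))^{-1}$. Hence $KR(\lambda,T)$ is positive. Moreover, for any $f\in\X$ the function $R(\lambda,T)f(v)=(\lambda+\Sigma(v))^{-1}f(v)$ satisfies $\Sigma\cdot R(\lambda,T)f\in\X$, so $R(\lambda,T):\X\to\Y=\D(T)$ is bounded. Consequently, it will suffice to establish that $K:\Y\to\X$ is weakly compact, since composing a bounded operator with a weakly compact one yields a weakly compact operator.

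The plan for the weak compactness of $K:\Y\to\X$ is to proceed in two reduction steps. First, I reduce to the case of a bounded angular kernel $b\in L^\infty(\S)$. For general $b\in L^1(\S)$, consider the truncations $b_n(s):=b(s)\,\mathbf{1}_{\{b(s)\leq n\}}$; then $b_n\in L^\infty(\S)$ and $\|b_n-b\|_{L^1(\S)}\to 0$ by dominated convergence. By Lemma \ref{lemme:app}, the associated operators $K_n$ satisfy $\|K_n-K\|_{\mathscr{B}(\Y,\X)}\to 0$. Since the class of weakly compact operators is a norm-closed subspace of $\mathscr{B}(\Y,\X)$, if each $K_n$ is weakly compact then so is $K$.

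Second, assuming $b\in L^\infty(\S)$, I use the pointwise domination
$$0\leq Kf(v)\leq \|b\|_{L^\infty(\S)}\,\mathbf{K}_\gamma |f|(v), \qquad v\in\R^d,$$
valid since $B(v-\vb,\sigma)\leq \|b\|_{L^\infty}|v-\vb|^\gamma$ and the gain-part integrand is nonnegative. By Proposition \ref{prop:comp}, $\mathbf{K}_\gamma:\Y\to\X$ is weakly compact (this is where the hypothesis $\gamma\in[0,d-2]$ enters). Weak compactness in $L^1$-spaces is characterized via the Dunford--Pettis theorem by uniform integrability together with tightness, and both properties are monotone: they are preserved by pointwise domination of the moduli. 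Thus the image of the unit ball of $\Y$ under $K$ is uniformly integrable and tight, hence relatively weakly compact in $\X$, proving that $K$ is weakly compact.

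The only delicate point in this scheme is the transfer of weak compactness from $\mathbf{K}_\gamma$ to $K$ via pointwise domination; this is the standard lattice argument based on Dunford--Pettis and poses no obstacle. The real content of the corollary is already contained in Proposition \ref{prop:comp} together with Lemma \ref{lemme:app}, which allow us to replace the original kernel $b\in L^1(\S)$ first by a bounded one and then by the pure variable-hard-spheres kernel $|v-\vb|^\gamma$.
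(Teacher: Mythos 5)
Your proof is correct and follows essentially the same path as the paper: reduce to $K:\Y\to\X$ weak compactness, approximate $b\in L^1(\S)$ by bounded kernels using Lemma \ref{lemme:app} together with the norm-closedness of weakly compact operators, and then dominate $K$ by $\|b\|_{L^\infty}\mathbf{K}_\gamma$ and invoke Proposition \ref{prop:comp} via Dunford--Pettis. Your extra remarks (the explicit truncation $b_n=b\,\mathbf{1}_{\{b\leq n\}}$ and the monotonicity of uniform integrability and tightness) simply make explicit steps that the paper leaves implicit.
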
 
\begin{proof} The fact that $KR(\lambda,T)$ is positive is obvious. Given $\lambda > 0$, the range of $R(\lambda,T)$ is $\Y$ and one has to prove that
$$K \::\:\Y \to \X \quad \text{ is weakly-compact}.$$
Let $(b_{n})_{n}$ be a sequence of angular kernel such that $b_{n} \in L^{\infty}(\S)$ and $\|b_{n}-b\|_{L^{1}(\S)} \to 0$ as $n \to \infty$ and let $(K_{n})_{n}$ be the sequence of collision operator associated to $(b_{n})_{n}$. According to Lemma \ref{lemme:app}, $\lim_{n}\|K_{n}-K\|_{\mathscr{B}(\Y,\X)}=0.$ In particular, if $K_{n}$ is weakly compact for any $n \in \mathbb{N}$, so is $K$. It suffices therefore to prove that $K\::\:\Y \to \X$ is weakly-compact whenever $b \in L^{\infty}(\S).$ Now, for $b \in L^{\infty}(\S)$, one clearly has
$$Kf(v) \leq \|b\|_{L^{\infty}(\S)}\mathbf{K}_{\gamma}f(v) \qquad \forall v \in \R^{d}, \quad \forall f \in \Y, f \geq 0,$$
i.e. $K$ is dominated by the operator $\|b\|_{L^{\infty}(\S)}\mathbf{K}_{\gamma}.$ By Dunford-Pettis Theorem, the weak compactness of $\mathbf{K}_{\gamma}$ given by Proposition \ref{prop:comp} ensures that of $K$.
\end{proof}


From now on, we shall always consider that the angular kernel $b \in L^{1}(\S)$.

According to the previous Proposition, Theorem \ref{theommk} applies to this case yielding the following
\begin{theo}\label{theo:generation} If the weight function is given by \eqref{expwe} or \eqref{algwe}, then, for any $\gamma \in [0,d-2]$, the linear Boltzmann operator $(\L,\D(\L))$ with $\D(\L)=\Y $ is the generator of a positive \com $(V(t))_{t \geq 0}$ on $\X$ with moreover
$$\mathfrak{S}_{\mathrm{ess}}(V(t))=\mathfrak{S}_{\mathrm{ess}}(U(t)) \qquad \forall t \geq 0.$$
\end{theo}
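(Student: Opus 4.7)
The plan is to obtain Theorem \ref{theo:generation} as a direct application of the abstract perturbation result Theorem \ref{theommk}, with unperturbed generator $A=T$ and positive perturbation $B=K$. All the analytic content has already been packaged in the preceding material; only the verification of the hypotheses of Theorem \ref{theommk} remains.

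First I would recall that $T$, defined as multiplication by $-\Sigma(v)$, generates the positive $C_0$-semigroup $(U(t))_{t\geq 0}$ on $\X=L^1(m^{-1}(v)\d v)$, with $[U(t)f](v)=\exp(-\Sigma(v)t)f(v)$. By \eqref{sigma}, the natural domain $\{f\in\X:\Sigma f\in \X\}$ coincides with $\Y=L^1((1+|v|)^\gamma m^{-1}(v)\d v)$. For any $\lambda>0$, the resolvent $R(\lambda,T)$ is multiplication by $(\lambda+\Sigma(v))^{-1}$ and maps $\X$ onto $\Y=\D(T)$. Next I would verify that the perturbation $K$ satisfies the two hypotheses needed to invoke Theorem \ref{theommk}:
\begin{enumerate}
\item[(i)] $K$ is positive on $\D(T)=\Y$, which is clear since $K$ is the integral operator with nonnegative kernel $k_B(v,w)$ from \eqref{eq:Kkerne};
\item[(ii)] $K R(\lambda,T):\X\to\X$ is weakly compact for some (equivalently, every) $\lambda>0$, which is precisely the content of the Corollary stated just above Theorem \ref{theo:generation}.
\end{enumerate}

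With (i) and (ii) in hand, Theorem \ref{theommk} applies and immediately delivers all the conclusions of Theorem \ref{theo:generation}: $\L=T+K$ with domain $\D(\L)=\D(T)=\Y$ generates a $C_0$-semigroup $(V(t))_{t\geq 0}$ on $\X$, and the essential spectra coincide, $\mathfrak{S}_{\mathrm{ess}}(V(t))=\mathfrak{S}_{\mathrm{ess}}(U(t))$ for all $t\geq 0$. Positivity of $(V(t))_{t\geq 0}$ is then free: the Dyson--Phillips series recalled in \eqref{dyson} writes $V(t)=\sum_{j\geq 0}U_j(t)$, where $U_0(t)=U(t)\geq 0$ and each $U_{j+1}(t)$ is obtained by composing and integrating positive operators, so every term of the series is positive and the sum is a positive operator.

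Since essentially every nontrivial ingredient has been proved earlier, there is no real obstacle left; the only thing to be careful about is making the identification $\mathrm{Range}\, R(\lambda,T)=\Y$ explicit so that the weak compactness of $K:\Y\to\X$ (the content of Proposition \ref{prop:comp} combined with Lemma \ref{lemme:app} and domination by $\mathbf{K}_\gamma$) translates immediately into weak compactness of $KR(\lambda,T):\X\to\X$, matching the precise hypothesis of Theorem \ref{theommk}. The restriction $\gamma\in[0,d-2]$ and the necessity of a nontrivial weight $m$ enter only through Proposition \ref{prop:comp}, as already noted in Remark \ref{noweight}.
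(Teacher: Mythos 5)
Your proposal is correct and follows exactly the paper's argument: verify positivity of $K$ and weak compactness of $KR(\lambda,T)$ (via the Corollary, itself relying on Lemma \ref{lemme:app}, domination by $\mathbf{K}_\gamma$, and Proposition \ref{prop:comp}), then invoke Theorem \ref{theommk}. The extra remark on reading off positivity of $(V(t))_{t\geq 0}$ from the Dyson--Phillips series is a useful clarification but is already implicit in Theorem \ref{theommk} and the surrounding discussion.
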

\begin{nb}\label{spectrL} Notice that the spectrum $\mathfrak{S}(T)$ of $T$ in $\X$ is given by
$$\mathfrak{S}(T)=\mathrm{Range}(-\Sigma)=(-\infty,-\eta]$$
where $\eta=\mathrm{ess-inf}_{v \in \R^d} \Sigma(v) > 0$ is explicit. Moreover, the essential spectrum of $(\L,\D(\L))$ is then
$$\mathfrak{S}_{\mathrm{ess}}(\L)=\mathfrak{S}_{\mathrm{ess}}(T) \subset (-\infty,-\eta]$$
and $\mathfrak{S}(\L)=\mathfrak{S}_{\mathrm{ess}}(T) \bigcup \{\lambda_k\}_{k \geq 1}$ where $(\lambda_k)_k$ are eigenvalues of $(\L,\D(\L))$ with finite algebraic multiplicities.\end{nb}
\begin{nb}Notice that, since the kernel $k(\cdot,\cdot)$ is positive:
$$k(v,w) > 0 \qquad \text{ for a. e. } v,w \in \R^d \times \R^d $$
the semigroup $(V(t))_{t \geq 0}$ generated by $(\L,\D(\L))$ is \emph{irreducible} in $\X$. Actually,  if  $f \in \X$, $f \geq 0$, $f \neq 0$, then $[V(t)f](v) > 0$ for a. e. $v \in \R^d$. Indeed, the semigroup being given by a Dyson-Phillips expansion series
$$V(t)f=\sum_{n=0}^{\infty}V_{n}(t)$$
where $V_{0}(t)=U(t)$ and $V_{n+1}(t)f=\int_{0}^{t}V_{n}(t-s)KU(s)f	\d s$ for any $f \in \X$. Since the kernel $k(\cdot,\cdot)$ is positive, it is easy to check that $[V_{n}(t)f](v) > 0$ for a. e. $v \in \R^{d}$ if $f \in \X$, $f \geq 0$ non identically equal to zero. 
\end{nb}

Notice that $0$ is a simple eigenvalue of $\L$ with eigen-space spanned by $\M$. In particular, the spectral bound $s(\L)$ is such that $s(\L) \geq 0$. Finally, using Proposition \ref{propo:sLomega} one has
\begin{propo}\label{prop:type} The type $\omega_0(V)$ of the \com   $(V(t))_{t \geq 0}$ is zero, i.e.
$$\omega_0(V)=s(\L)=0.$$
 \end{propo}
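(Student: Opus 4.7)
The plan is to combine Proposition \ref{propo:sLomega}, which gives the identity $\omega_0(V) = s(\L)$ for positive $C_0$-semigroups on $L^1$-spaces, with a direct computation of the spectral bound $s(\L)$. I would first check the easy lower bound: the Gaussian $\M$ has enough decay to belong to $\X$ regardless of whether $m$ is of exponential type \eqref{expwe} or algebraic type \eqref{algwe}, and $\L\M=0$, so $0$ is an eigenvalue of $\L$ and $s(\L)\geq 0$.

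For the upper bound $s(\L)\leq 0$, I would exploit the spectral picture already recorded in Remark \ref{spectrL}: any $\lambda\in\mathfrak{S}(\L)$ with $\mathrm{Re}\lambda>-\eta$ is an isolated eigenvalue of $\L$ with finite algebraic multiplicity, hence a pole of the resolvent $R(\cdot,\L)$. Since $(V(t))_{t\geq 0}$ is a positive $C_0$-semigroup on the Banach lattice $\X$, the spectral bound $s(\L)$ belongs to $\mathfrak{S}(\L)$; combined with $s(\L)\geq 0 > -\eta$, this places $s(\L)$ among the isolated eigenvalues and makes it a pole of the resolvent. Because the semigroup $(V(t))_{t\geq 0}$ is irreducible (as noted in the remark following Theorem \ref{theo:generation}), Proposition \ref{aren}(4) then produces a quasi-interior eigenvector, i.e.\ a strictly positive $f\in\X$ with $\L f = s(\L)\,f$.

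The argument closes with the mass-conservation identity
$$\int_{\R^d}\L g(v)\,\d v=0,\qquad \forall g\in\D(\L)=\Y,$$
which is the standard consequence of the pre-post collisional change of variables $(v,v_*)\leftrightarrow(v',v_*')$ applied to the defining formula \eqref{bolt}. Applying this to the eigenfunction $g=f$ yields
$$s(\L)\int_{\R^d}f(v)\,\d v=\int_{\R^d}\L f(v)\,\d v=0,$$
and since $f>0$ almost everywhere the integral is strictly positive, forcing $s(\L)=0$. Together with Proposition \ref{propo:sLomega} this gives $\omega_0(V)=s(\L)=0$.

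The main (mild) technical point is to check that all the ingredients are meaningful in the weighted space $\X$: that $\M\in\X$, that mass conservation may legitimately be applied to $f$ (which holds since $f\in\D(\L)$ entails $\L f\in\X\hookrightarrow L^1(\R^d,\d v)$ thanks to $m^{-1}\geq 1$), and that the Perron-Frobenius framework of Proposition \ref{aren} is indeed available in $\X=L^1(\R^d,m^{-1}\d v)$. Each of these is standard, so no serious obstacle is expected.
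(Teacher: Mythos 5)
Your argument is correct and yields the same conclusion by a dual route. Both proofs share the same skeleton: the lower bound $s(\L)\geq 0$ from $\L\M=0$ with $\M\in\X$; the observation that $s(\L)>-\eta$ places $s(\L)$ among the isolated eigenvalues (a pole of the resolvent); irreducibility plus Proposition~\ref{aren} to produce a strictly positive eigenelement at $s(\L)$; and a duality pairing of two strictly positive objects to force $s(\L)=0$, after which $\omega_0(V)=s(\L)$ follows from Proposition~\ref{propo:sLomega}. The difference is which side of the duality you use. The paper takes the \emph{known} eigenfunction $\M\in\mathrm{Ker}(\L)$ and pairs it with an abstractly constructed strictly positive eigenfunction $g^\star$ of $\L^\star$ at $s(\L)$ coming from Proposition~\ref{aren}, reading off $s(\L)\langle\M,g^\star\rangle=0$. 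You instead take the abstractly constructed strictly positive eigenfunction $f$ of $\L$ at $s(\L)$ from Proposition~\ref{aren}(4) and pair it with the \emph{known} co-eigenfunction, namely mass conservation $\L^\star m=0$ (equivalently $\int_{\R^d}\L g\,\d v=0$ for $g\in\D(\L)$), reading off $s(\L)\int_{\R^d}f\,\d v=0$. These are precise duals, and the paper itself records $\L^\star m=0$ and $\Pi_0=m\otimes\M$ in the remark immediately following the proof, so your variant is fully consistent with the paper's picture. Your technical checks (that $\M\in\X$, that $\L f\in\X\hookrightarrow L^1(\R^d,\d v)$ so mass conservation applies, and that $\X$ is a bona fide $L^1$-lattice for Proposition~\ref{aren}) are exactly the right points to flag and all hold.
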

\begin{proof} Let $\langle\cdot;\cdot\rangle_{\X,\X^\star}$ denotes the duality bracket between $\X$ and the dual space $\X^\star$. Now, since $\omega_{\mathrm{ess}}(V) < 0 \leq s(\L)$, one knows (see Remark \ref{spectrL}) that $s(\L)$ is an isolated eigenvalue of $\L$ with finite algebraic multiplicity. According to Proposition \ref{aren}, since the \com $(V(t))_{t \geq 0}$ is irreducible, $s(\L)$ is also an eigenvalue of the adjoint operator $\L^\star$ in $\X^\star$ associated to a \emph{positive eigenfunction} $g^\star$, $g^\star(v) > 0$ for a.e. $v \in \R^d$. Then, since $\L(\M)=0$, one has
$$0=\left\langle\L(\M)\;;\; g^\star\right\rangle_{\X,\X^\star}=\left\langle\M\;;\;\L^\star(g^\star)\right\rangle_{\X,\X^\star}=s(\L) \left\langle\M\;;\;g^\star\right\rangle_{\X,\X^\star}$$
and, since both $g^\star$ and $\M$ are positive, one has $\left\langle\M\;;\;g^\star\right\rangle_{\X,\X^\star} > 0$ from which necessarily $s(\L)=0.$ The conclusion follows from the well-know fact that, being the \com $(V(t))_{t \geq 0}$ positive, one has $\omega_0(V)=s(\L)$.
\end{proof}
\begin{nb} In the above proof, we used the fact that $s(\L)$ is an eigenvalue of the adjoint operator $(\L^\star,\D(\L^\star))$ associated to a positive eigenfunction $g^\star$. Actually, thanks to the conservative properties of $\L$ one can now be more precise.  Namely, one notices that
 \begin{equation}\label{mass}\int_{\R^d}\L f(v) \d v=0 \qquad \forall f \in \D(\L).\end{equation}
In particular, with the notations of the above proof, one deduces that
$$\left\langle \L f\;;\;m\right\rangle_{\X,\X^\star}=\left\langle f\;;\;\L^\star m\right\rangle_{\X,\X^\star}=\int_{\R^d} \L f(v) \d v=0 \qquad \forall f \in \D(\L).$$
In particular, $\L^\star\,m=0$ and $s(\L)=0$ is an eigenvalue of the adjoint operator $(\L^\star,\D(\L^\star))$  associated to the eigenfunction $g^\star=m$. Moreover, using again Proposition \ref{aren}, the spectral projection $\Pi_0$ associated to the eigenvalue $s(\L)=0$
of $\L$ is given by $\Pi_0=m \otimes \M$, i.e.
\begin{equation}\label{pi_0}
 \Pi_0 f=\langle f,m\rangle_{\X,\X^\star} \M =\varrho_f \M\;;\;\quad \text{ with } \varrho_f =\int_{\R^d} f(v)\d v\;;\quad \forall f \in \X.
\end{equation}
\end{nb}
We are in position to prove Theorem \ref{theo:introHard}

\begin{proof}[Proof of Theorem \ref{theo:introHard}] The fact that $\L$ is the generator of a positive semigroup in $\X$ has been already proven in Theorem \ref{theo:generation} so we focus here only on the large time behaviour of $(V(t))_{t\geq 0}.$ Combining Theorem \ref{theo:generation} with Proposition \ref{prop:type}, one has
$$\omega_{\mathrm{ess}}(V)=\omega_{\mathrm{ess}}(U) \leq -\eta < 0=\omega_{0}(V)=s(\L).$$
We get therefore the conclusion using Proposition \ref{prop:positive},  Theorem \ref{theo:engel}  together with the expression of $\Pi_0$ given by \eqref{pi_0}.
\end{proof}

\section{Convergence to equilibrium for linear BE with soft potentials}\label{sec:soft}

We investigate in this section the case of soft potentials. Our analysis is performed in the ``unweighted'' space
$$\X=L^{1}(\R^{d},\d v)$$ 
since we shall apply results pertaining to the ``substochastic theory'' of semigroups. We recall here that, in such a case, the collision frequency is bounded by virtue of \eqref{sigma}. We use the notations of the previous section. Using again the decomposition 
$$\L=K-T$$
with $T f(v)=\Sigma(v)f(v)$, one has $\D(T)=\X$ and, consequently, the $C_{0}$-semigroup  $(U(t))_{t \geq 0}$ in $\X$ defined by
$$U(t)f(v)=\exp(-\Sigma(v) t)f(v)\qquad t\geq 0,\:\:f \in \X,$$
is \emph{uniformly continuous} in the following sense:
\begin{equation}\label{normcontinuous}\lim_{t \to 0^{+}}\|U(t)-\mathbf{I}\|_{\mathscr{L}(\X)}=0\end{equation}
where $\mathscr{L}(X)$ stands for the Banach space  of all bounded operators in $\X$ endowed with the uniform operator topology induced by $\|\cdot\|_{\mathscr{L}(\X)}$ whereas $\mathbf{I}$ is the identity operator in $\X.$ Indeed, one checks easily that, for any $f \in \X$:
$$\|U(t)f-f\|_{\X} \leq \int_{\R^{d}}\left|\exp(-t\Sigma(v))-1\right|\,|f(v)|\d\nu(v) \leq t\int_{\R^{d}}\Sigma(v)\,|f(v)|\d\nu(v) 
\leq \sigma_{2}\,t \|f\|_{\X},$$
from which
$$\|U(t)-\mathbf{I}\|_{\mathscr{L}(\X)} \leq t\sigma_{2}\qquad \forall t > 0$$
where $\sigma_{2} >0$ is the positive constant appearing in \eqref{sigma}. Then, \eqref{normcontinuous} follows. Concerning the collision operator $K$, we have the following:
\begin{propo}\label{propo:bounded} Assume $m$ to be given by \eqref{expwe} or \eqref{algwe}. If $\gamma \in (-d,0]$ and $b \in L^{1}(\S)$ is nonnegative, then $K \::\:\X \to \X$ is bounded. 
\end{propo}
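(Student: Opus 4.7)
The plan is to exploit the structure of the loss/gain decomposition together with the pre-post collisional change of variables, reducing the $L^1$-norm of $Kf$ to an integral involving a convolution with $\M$ that stays bounded for $\gamma \in (-d,0]$. Since the statement concerns a weighted $L^1$-space (with $m$ possibly trivial), I will carry out the estimate in full generality, so that the unweighted case $m\equiv 1$ falls out as a special case.

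First, I would reduce to nonnegative $f$ by means of the pointwise bound $|Kf(v)|\le K|f|(v)$, which follows since the Carleman kernel $k_B(v,w)$ is nonnegative. So it suffices to bound $\|Kf\|_\X$ for $f\ge 0$. Writing out the definition of $K$ and applying Fubini,
\begin{equation*}
\|Kf\|_{\X}=\int_{\R^d\times\R^d\times\S} B(v-\vb,\sigma)\,f(v')\,\M(\vb')\,m^{-1}(v)\,\d v\,\d\vb\,\d\sigma,
\end{equation*}
and then the involutive change of variables $(v,\vb,\sigma)\to(v',\vb',\sigma)$, which has unit Jacobian and preserves $|v-\vb|$ and $b(\cos\theta)$, turns this into
\begin{equation*}
\|Kf\|_{\X}=\int_{\R^d\times\R^d\times\S} B(v-\vb,\sigma)\,f(v)\,\M(\vb)\,m^{-1}(v')\,\d v\,\d\vb\,\d\sigma.
\end{equation*}

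The second step is to dispose of the factor $m^{-1}(v')$, using exactly the pointwise weight estimates already established in the course of the proof of Lemma \ref{lemme:app}: for exponential weights \eqref{expwe} one has $m^{-1}(v')\le m^{-1}(v)m^{-1}(\vb)$ by subadditivity of $r\mapsto r^s$ with $s\in(0,1]$, and for algebraic weights \eqref{algwe} one has $m^{-1}(v')\le C_\beta\,m^{-1}(v)m^{-1}(\vb)$. Combining this with $\int_\S b(\cos\theta)\d\sigma=\ell_b$ yields
\begin{equation*}
\|Kf\|_{\X}\le C\,\ell_b\int_{\R^d} f(v)\,m^{-1}(v)\,\widetilde\Sigma(v)\,\d v,
\qquad \widetilde\Sigma(v):=\int_{\R^d}|v-\vb|^\gamma\,\M(\vb)\,m^{-1}(\vb)\,\d\vb.
\end{equation*}

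The final step, and the only one requiring any genuine verification, is to check that $\widetilde\Sigma\in L^\infty(\R^d)$. Since $\M\,m^{-1}$ is a rapidly decaying function (Gaussian times at most a stretched sub-Gaussian or polynomial factor, hence globally integrable and bounded), one splits $|v-\vb|^\gamma=|v-\vb|^\gamma\mathbf{1}_{|v-\vb|\le 1}+|v-\vb|^\gamma\mathbf{1}_{|v-\vb|>1}$: the singular part integrates because $\gamma>-d$ (and $\M m^{-1}$ is bounded in a neighborhood of any $v$), and the tail part is bounded by $1$ times $\|\M m^{-1}\|_{L^1}$. Hence $\sup_v \widetilde\Sigma(v)<\infty$, and therefore $\|Kf\|_\X\le C'\,\|f\|_\X$ with $C'$ depending only on the weight and on $\ell_b,\gamma$. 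In the unweighted case this last step reduces trivially to the upper bound $\Sigma(v)\le\sigma_2$ from \eqref{sigma}. The main (minor) technical point is just the case analysis for the weight-swap estimate on $m^{-1}(v')$; everything else is a direct computation.
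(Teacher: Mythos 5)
Your proof is correct, and it takes a mildly different route from the paper's. Both arguments start identically: reduce to $f\ge 0$, Fubini, and the pre--post collisional change of variables $(v,\vb,\sigma)\to(v',\vb',\sigma)$. The differences are in what follows. First, you carry the weight $m^{-1}(v')$ through the computation and absorb it via the submultiplicativity estimates $m^{-1}(v')\le C\,m^{-1}(v)\,m^{-1}(\vb)$ already proved in Lemma~\ref{lemme:app}; the paper's proof, although the statement hypothesizes a general weight, is actually written for the unweighted case $\X=L^1(\R^d,\d v)$ (which is what Section~\ref{sec:soft} uses), so your version is the more general one and subsumes theirs. Second, to bound the remaining convolution $\widetilde\Sigma(v)=\int_{\R^d}|v-\vb|^\gamma\,\M(\vb)\,m^{-1}(\vb)\,\d\vb$ uniformly in $v$, you use the elementary near/far splitting of $|v-\vb|^\gamma$ (the singularity is locally integrable since $\gamma>-d$, the tail is bounded since $\gamma\le 0$, and $\M m^{-1}$ is both bounded and integrable). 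The paper instead invokes the Hardy--Littlewood--Sobolev inequality with an $L^1$--$L^q$ pairing. Your direct splitting is arguably the cleaner route here: the HLS inequality in the form cited does not quite apply with one exponent equal to $1$, and what that step really proves is precisely the $L^\infty$ bound on $\M*|\cdot|^\gamma$ that you establish directly. No gaps.
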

\begin{proof}
The proof uses some of the computations made in Lemma \ref{lemme:app}. Namely, given $f \in \X$ and $\gamma \in (-d,0]$, $b \in L^{1}(\S)$, one sees that 
\begin{multline*}
\|Kf\|_{\X} \leq \int_{\R^{2d}\times \S} B(v-\vb,\sigma) \,|f(v)|\,\M(\vb)\d v\d\vb\d\sigma\\
= \int_{\R^{2d}}b(\cos\theta) \,|v-\vb|^{\gamma}|f(v)|\,\M(\vb)\d v\d\vb\d\sigma.
\end{multline*}

Therefore, we get
$$\| Kf\|_{\X} \leq \| b\|_{L^{1}(\S)}\int_{\R^{2d}}\,|f(v)|\,\M(\vb)\,|v-\vb|^{\gamma}\,\d\vb\d v.$$
Since it clearly holds that $\M \in L^{q}(\R^{d})$ for any $q > 1$, one deduces from  Hardy-Littlewood-Sobolev inequality (see \cite[Theorem 4.3]{lieb}) that
$$\int_{\R^{2d}}\,|f(v)|\,\M(\vb)\,|v-\vb|^{\gamma}\,\d\vb\d v \leq C_{d}(\gamma)\,\|f\|_{L^{1}(\R^{d})}\,\|\M\|_{L^{q}(\R^{d})}$$
with some universal constant $C_{d}(\gamma)$ depending only on $\gamma$ and with $q >1$ such that $\gamma/d=1/q-1.$ Thish proves the result.
\end{proof}
With this in hands, one can prove a first non quantitative convergence theorem
\begin{propo}\label{propo:nonquantit} 
If  $\gamma \in (-d,0)$ and $b(\cdot) \in L^1(\S)$ then $\L$ generates a \com $(V(t))_{t \geq 0}$   in $\X$ which satisfies
$$\lim_{t \to \infty}\|V(t)f_{0}-\varrho_0 \M \|_\X =0$$
where $\varrho_0=\ds \int_{\R^d} f_{0}(v)\d v$ for any $f_{0} \in \X$.
\end{propo}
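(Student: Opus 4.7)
The plan is to apply Theorem \ref{02law} with $A := -T$ (the generator of the substochastic, uniformly continuous semigroup $\ut$ defined by $U(t)f(v) = e^{-\Sigma(v)t} f(v)$) and $B := K$. First I would use the fact that, for $\gamma \in (-d,0)$, the collision frequency $\Sigma$ is bounded (cf.~\eqref{sigma}), so $T$ is a bounded operator on $\X$; by Proposition \ref{propo:bounded} the same is true of $K$. Hence $\L = K - T$ is bounded on $\X$ and generates a uniformly continuous positive \com $\vt$, which takes care of the generation part of the statement.

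Next I would verify the remaining hypotheses of Theorem \ref{02law}. Positivity of $B$ is immediate from the integral representation of $K$ in Appendix \ref{sec:appA}; the multiplication structure of the resolvent yields $\|(\lambda-A)^{-1}\|_{\mathscr{L}(\X)} \le 1/\lambda$, so
\[
r_\sigma\bigl(K(\lambda-A)^{-1}\bigr) \;\le\; \|K\|_{\mathscr{L}(\X)}/\lambda \;\longrightarrow\; 0 \qquad (\lambda \to \infty),
\]
which is in particular strictly less than $1$ for $\lambda$ large. The mass-conservation identity \eqref{mass} forces $\int_{\R^d}(Af+Bf)\,\mathrm{d}v = 0 \le 0$. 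Because $\Sigma(v) > 0$ for every $v \in \R^d$, dominated convergence yields $\lim_{t\to\infty}\|U(t)f\|_{\X} = 0$ for every $f \in \X$. The irreducibility of $\vt$ is checked exactly as in Section \ref{sec:hard}: the Dyson-Phillips expansion consists of positive terms and the kernel $k_B(v,w)$ appearing in \eqref{eq:Kkerne} is strictly positive. The kernel of $\L$ contains $\M$ and is therefore non-trivial. Finally, because $\L$ is bounded, the whole semigroup $\vt$ is uniformly continuous in operator norm, so every tail $R_m(t)$ of the Dyson-Phillips expansion is norm continuous and one may take $m = 0$.

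Theorem \ref{02law} then provides $V(t)f \to \mathbb{P}f$ in $\X$ for every $f \in \X$, where $\mathbb{P}$ is the ergodic projection onto $\mathrm{Ker}(\L)$. To identify $\mathbb{P}$, I would combine the irreducibility of $\vt$, which yields $\mathrm{Ker}(\L) = \mathrm{span}(\M)$, with the mass-conservation identity \eqref{mass} in the form $\langle \L f, \mathbf{1}\rangle_{\X,\X^\star} = 0$: this forces $\L^\star \mathbf{1} = 0$ and hence $\mathbf{1} \in \mathrm{Ker}(\L^\star)$. Since the pairing $\int_{\R^d} \mathbf{1}\cdot \M\,\mathrm{d}v = 1$ is already correctly normalized, the Remark following Theorem \ref{02law} then gives $\mathbb{P}f = \varrho_f\,\M$, which, specialized to $f = f_0$, is exactly the claimed limit. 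The only genuinely non-trivial step in this plan is the verification of irreducibility; it however follows immediately from the strict positivity of $k_B$ established in Appendix \ref{sec:appA}, and every other hypothesis is built into the boundedness of $\L$ and the mass-conservation identity \eqref{mass}.
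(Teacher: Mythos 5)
Your proposal is correct and follows essentially the same route as the paper: boundedness of $T$ (via \eqref{sigma}) and of $K$ (Proposition \ref{propo:bounded}) makes $\L$ a bounded generator, so both $(U(t))_{t\geq0}$ and $(V(t))_{t\geq0}$ are uniformly continuous, the Dyson--Phillips remainders are norm continuous, and Theorem \ref{02law} applies thanks to irreducibility and $\M\in\mathrm{Ker}(\L)$. You merely verify the remaining hypotheses (spectral radius bound, mass conservation \eqref{mass}, strong decay of $U(t)$, identification of $\mathbb{P}f=\varrho_f\M$) more explicitly than the paper, which leaves them implicit.
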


\begin{proof} Since both $T$ and $K$ are bounded operator, one has $\L \in \mathscr{L}(\X)$ and the  \com $(V(t))_{t \geq 0}$ generated by $\L$ is \emph{uniformly continuous}. Since the same holds for the \com $(U(t))_{t \geq 0}$, defining 
$$R_{1}(t)=V(t)-U(t)$$
it is clear that the mapping $t \geq 0 \mapsto R_{1}(t) \in \mathscr{L}(\X)$ is continuous (with respect to the operator norm). Since moreover $\mathrm{Ker}(\L) \neq \{0\}$ (since $\mathcal{M} \in \X$ and $\L(\mathcal{M})=0$) and the semigroup $(V(t))_{t \geq 0}$ is irreducible, one concludes with Theorem \ref{02law}.
\end{proof}

We can make the above result more precise and provide here a quantitative version of the above convergence result. Recall that the gain operator $K$ can be written as an integral operator
$$K f(v)=\int_{\R^{d}}k(v,w)f(w)\d w \qquad \forall f \in \X$$
for some nonnegative measurable kernel $k(v,w)$ such that
\begin{equation}\label{eq:sigK}
\Sigma(v)=\int_{\R^{d}}k(w,v)\d w \qquad \forall v \in \R^{d}.\end{equation}
Recalls that $\Sigma \in L^{\infty}(\R^{d})$ and $\inf_{v} \Sigma(v)=0$. The above identity will play a crucial role in the proof of the following:
\begin{propo}\label{prop:spectR}
Define the mapping ${\vartheta}\::\:\R^{+}\to \R^{+}$ by
\begin{equation}\label{Estimation resolvante}
\vartheta(r):=\frac{1}{r}\dfrac{1}{1-\frac{\Sigma _{\max }}{\sqrt{r^{2}+\Sigma _{\max
}^{2}}}}\ \ (r>0)   
\end{equation}%
where $\Sigma_{\max }:=\sup_{v}\Sigma(v) <\infty.$
Then, one has
$$i\,\R \setminus \{0\} \subset \varrho(\L)$$
and
$$\left\|R(i\alpha,\L)\right\|_{\mathscr{L}(\X)} \leq \vartheta\left(|\alpha|\right) \qquad \forall \alpha \in \R \setminus \{0\}.$$
\end{propo}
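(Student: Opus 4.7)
The plan is to obtain a resolvent bound via the standard perturbative factorization $i\alpha - \L = (\mathbf{I} - K(i\alpha + T)^{-1})(i\alpha + T)$ and invert the first factor by Neumann series. Since $T$ is the bounded multiplication by $\Sigma$, for any $\alpha \in \R \setminus \{0\}$ the multiplication operator $i\alpha + T$ acts by multiplication by $i\alpha + \Sigma(v)$, whose inverse is bounded on $\X$ with
\[
\bigl\|(i\alpha + T)^{-1}\bigr\|_{\mathscr{L}(\X)} \leq \sup_{v \in \R^{d}} \frac{1}{|i\alpha + \Sigma(v)|} = \sup_{v} \frac{1}{\sqrt{\alpha^{2} + \Sigma(v)^{2}}} \leq \frac{1}{|\alpha|},
\]
since $\Sigma \geq 0$. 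This gives one of the two factors appearing in $\vartheta(|\alpha|)$.

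The key step (and the main technical point) is the bound
\[
\bigl\| K(i\alpha + T)^{-1} \bigr\|_{\mathscr{L}(\X)} \leq \frac{\Sigma_{\max}}{\sqrt{\alpha^{2} + \Sigma_{\max}^{2}}} < 1.
\]
I would prove it by exploiting the integral nature of $K$ and the mass-conservation identity \eqref{eq:sigK}. Namely, for any $f \in \X$, since $K$ has nonnegative kernel $k(v,w)$, by Fubini
\[
\bigl\| K(i\alpha+T)^{-1} f\bigr\|_{\X} \leq \int_{\R^{d}} \frac{|f(w)|}{|i\alpha + \Sigma(w)|}\left(\int_{\R^{d}} k(v,w)\,\d v\right)\d w = \int_{\R^{d}} \frac{\Sigma(w)\,|f(w)|}{\sqrt{\alpha^{2}+\Sigma(w)^{2}}}\,\d w,
\]
where the bracketed integral equals $\Sigma(w)$ thanks to \eqref{eq:sigK}. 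Since $r \mapsto r/\sqrt{\alpha^{2}+r^{2}}$ is increasing on $\R^{+}$ and $\Sigma(w) \leq \Sigma_{\max}$, the claimed inequality follows.

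Combining these two ingredients: because $\|K(i\alpha+T)^{-1}\|_{\mathscr{L}(\X)} < 1$, the operator $\mathbf{I} - K(i\alpha+T)^{-1}$ is invertible on $\X$ by Neumann series, with
\[
\bigl\|(\mathbf{I} - K(i\alpha+T)^{-1})^{-1}\bigr\|_{\mathscr{L}(\X)} \leq \frac{1}{1 - \Sigma_{\max}/\sqrt{\alpha^{2}+\Sigma_{\max}^{2}}}.
\]
Hence $i\alpha - \L$ is a bijection of $\X$ for every $\alpha \in \R \setminus \{0\}$, which proves $i\R \setminus \{0\} \subset \varrho(\L)$, and from the factorization
\[
R(i\alpha,\L) = (i\alpha+T)^{-1}\bigl(\mathbf{I} - K(i\alpha+T)^{-1}\bigr)^{-1}
\]
one obtains $\|R(i\alpha,\L)\|_{\mathscr{L}(\X)} \leq \vartheta(|\alpha|)$ by multiplying the two bounds. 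The only real subtlety is the use of the mass-conservation identity \eqref{eq:sigK} to convert the $L^{1}$-norm of $K(i\alpha+T)^{-1}f$ into a scalar estimate; once this is available, the remainder is a direct Neumann-series argument.
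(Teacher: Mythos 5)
Your proof is correct and follows essentially the same route as the paper: the same Neumann-series factorization of $i\alpha-\L$ through the multiplication operator, the same identity $\|(i\alpha+T)^{-1}\|=|\alpha|^{-1}$, and the identical key step of bounding $\|K(i\alpha+T)^{-1}\|$ via Fubini and the mass-conservation identity \eqref{eq:sigK} to obtain the factor $\Sigma_{\max}/\sqrt{\alpha^{2}+\Sigma_{\max}^{2}}$. The only difference is a cosmetic one of sign convention for $T$.
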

\begin{proof} Consider the resolvent equation
$$\lambda f-\L f=g, \qquad g \in \X, \:f \in \D(\L)=\X$$
where $\lambda \notin \mathrm{Range}(-\Sigma)=\mathfrak{S}(T)$ where $T$ is the multiplication operator by $-\Sigma$. Since $\L=K+T$ and $\lambda \in \varrho(T)$, such a problem is then equivalent to
$$\phi-KR(\lambda,T)\phi=g$$
where $\phi=(\lambda-T)f$. In particular, for such $\lambda$, one will get $\lambda \in \varrho (\L)$ if  
$$
\left\Vert  KR(\lambda,T)\right\Vert_{\mathscr{L}(\X)} <1 $$%
and then
\begin{equation}\label{eq:resol}
\phi=\sum_{n=0}^{\infty }\left( KR(\lambda,T)\right)
^{n}g \qquad \text{ and } \qquad f=\sum_{n=0}^{\infty}KR(\lambda,T)\left( KR(\lambda,T)\right)
^{n}g .\end{equation}
Let us now consider $\lambda=i\alpha$ with $\alpha \in \R$, $\alpha \neq 0.$ Then, $\alpha \notin \mathrm{Range}(-\Sigma)$ and
$R(i\alpha,T)$ is the multiplication operator by $\frac{1}{i\alpha+\Sigma}.$ Consequently,
$$KR(i\alpha,T)\phi=\int_{\R^{d}}
\dfrac{k(v,w)}{i\alpha +\Sigma (w)}\phi(w)\d w \qquad \forall \phi \in \X.$$
Consequently
\begin{equation*}\begin{split}
\|KR(i\alpha,T)\phi\|_{\X} &\leq \int_{\R^{d}}\d v\int_{\R^{d}}\frac{k(v,w)}{\sqrt{\alpha ^{2}+\Sigma ^{2}(w)}}%
\left\vert \phi(w)\right\vert \d w\\
&=\int_{\R^{d}}\frac{\Sigma(w)}{\sqrt{\alpha^{2}+\Sigma^{2}(w)}}|\phi(w)|\d w
\end{split}
\end{equation*}%
where we used Fubini's theorem and the identity \eqref{eq:sigK}. Therefore
$$\left\|KR(i\alpha,T)\right\|_{\mathscr{L}(\X)} \leq \sup_{w} \frac{\Sigma(w)}{\sqrt{\alpha^{2}+\Sigma^{2}(w)}} = \dfrac{\Sigma_{\max}}{\sqrt{\alpha^{2}+\Sigma_{\max}^{2}}} <1$$
since the mapping $r \geq 0 \mapsto \frac{r}{\sqrt{\alpha^{2}+r^{2}}}$ is nondecreasing. Then, $i\alpha \in \varrho(\L)$ and from \eqref{eq:resol}, we deduce that
$$
\|R(i\alpha,\L)\|_{\mathscr{L}(\X)} \leq 
\dfrac{1}{|\alpha|}\sum_{n=0}^{\infty }\left( \frac{\Sigma _{\max }}{\sqrt{\alpha
^{2}+\Sigma _{\max }^{2}}}\right) ^{n}  
= \frac{1}{\left\vert \alpha \right\vert }\frac{1}{1-\frac{\Sigma _{\max }}{%
\sqrt{\alpha ^{2}+\Sigma _{\max }^{2}}}}$$
where we used the fact that $\|R(i\alpha,T)\|_{\mathscr{L}(\X)}=\sup_{v}|i\alpha+\Sigma(v)|^{-1}=|\alpha|^{-1}.$ This proves the result.
\end{proof}

Introduce the mapping
$$\vartheta_{\log}(r):=\vartheta(r)\log \left( 1+\frac{\vartheta(r)}{r}\right) \qquad r > 0.$$
Notice that, since $\vartheta(\cdot)$ is strictly decreasing with
$$
\lim_{r\rightarrow +\infty }\vartheta(r)=0,\ \lim_{r\rightarrow 0_{+}}\vartheta(r)=+\infty . 
$$
the same holds true for $\vartheta_{\log}(\cdot)$ and $\vartheta_{\log}^{-1}\::\:\R^{+}\to \R^{+}$ denotes its inverse mapping.\\

Recall that the \com $(V(t))_{t \geq 0}$ generated by $\L$ in $\X$ is such that 
$$\lim_{t \to \infty}\|V(t)f_{0}-\mathbb{P}f_{0}\|_\X =0$$
where  the projection $\mathbb{P}$ is given by
$$\mathbb{P}f_{0}=\left(\int_{\R^d} f_{0}(v)\d v\right)\M \quad \text{ for any }  f_{0} \in \X.$$
Moreover, $\mathbb{P}$ is the spectral projection over $\mathrm{Ker}(\L)$ and, as such, the following holds
$$\X=\mathrm{Ker}(\L) \oplus \overline{\mathrm{Im}(\L)}.$$
Denote then by
$$\mathbf{Z}=\mathrm{Ker}(\L) \oplus  \mathrm{Im}(\L).$$
It is easy to check that $\mathbf{Z}$ is dense in $\X$ and one  deduces from the spectral properties of $\L$ established in Proposition \ref{prop:spectR}  the following
\begin{theo}\label{theo:rate}
Assume that $f \in  \mathbf{Z}$, then, for any $c \in (0,1)$,  
$$\|V(t)f-\mathbb{P}f\|_{\X}=\mathrm{O}\left(\vartheta_{\log}^{-1}(ct)\right) \qquad \text{ as } \quad t \to \infty.$$
\end{theo}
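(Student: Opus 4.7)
The plan is to apply a quantitative Tauberian theorem of Chill type to the bounded $C_{0}$-semigroup $(V(t))_{t\geq 0}$, exploiting the resolvent estimate provided by Proposition \ref{prop:spectR}. As a preliminary reduction, since $f\in\mathbf{Z}=\mathrm{Ker}(\L)\oplus\mathrm{Im}(\L)$, I would first decompose $f=f_{1}+\L g$ with $f_{1}\in\mathrm{Ker}(\L)$ and $g\in\X=\D(\L)$. Because $\mathbb{P}$ is the spectral projection onto $\mathrm{Ker}(\L)$ along $\overline{\mathrm{Im}(\L)}$, one has $\mathbb{P}f=f_{1}$; combined with $V(t)f_{1}=f_{1}$ and the commutation $V(t)\L=\L V(t)$ valid on $\D(\L)=\X$ (note that in the soft-potentials setting $\L$ is a bounded operator), this gives the identity
\[
V(t)f-\mathbb{P}f=V(t)\L g=\L V(t)g,\qquad t\geq 0,
\]
so that the problem reduces to estimating the decay of the orbit $t\mapsto V(t)h$ for $h=\L g\in\mathrm{Im}(\L)$.

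To apply the Tauberian theorem I would first observe that $(V(t))_{t\geq 0}$ is a contraction semigroup on $\X$: this follows from the conservation identity $\int_{\R^{d}}\L h\,\d v=0$ together with positivity preservation, since for arbitrary $h\in\X$ the splitting $h=h_{+}-h_{-}$ yields $\|V(t)h\|_{\X}\leq\|V(t)h_{+}\|_{\X}+\|V(t)h_{-}\|_{\X}=\|h_{+}\|_{\X}+\|h_{-}\|_{\X}=\|h\|_{\X}$. Proposition \ref{prop:spectR} then provides the exact spectral input needed, namely $i\R\setminus\{0\}\subset\varrho(\L)$ together with $\|R(i\alpha,\L)\|_{\mathscr{L}(\X)}\leq\vartheta(|\alpha|)$, the only singularity being at $\alpha=0$, which reflects the fact that $0\in\sigma(\L)$ has already been factored out by passing to $\overline{\mathrm{Im}(\L)}$. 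Invoking the Chill--Batty--Duyckaerts quantitative Tauberian theorem (see \cite{Chill,batty1,batty2}) then converts such a resolvent bound into a decay rate for orbits starting in $\mathrm{Im}(\L)$ governed by the inverse of the function $r\mapsto\vartheta(r)\log(1+\vartheta(r)/r)=\vartheta_{\log}(r)$, yielding for any $c\in(0,1)$
\[
\|V(t)f-\mathbb{P}f\|_{\X}=\|V(t)(\L g)\|_{\X}=\mathrm{O}\bigl(\vartheta_{\log}^{-1}(ct)\bigr)\qquad\text{as }t\to\infty,
\]
which is the announced conclusion.

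The part I expect to require the most care is the application of the Tauberian theorem itself. In its standard formulation, the hypothesis is $\sigma(A)\cap i\R=\emptyset$ together with a bound $M(|\alpha|)$ on $\|R(i\alpha,A)\|$ that grows as $|\alpha|\to\infty$; here, by contrast, $0\in\sigma(\L)$ and the bound $\vartheta(|\alpha|)$ blows up precisely as $\alpha\to 0^{+}$, while it decays as $|\alpha|\to\infty$. One therefore has to pass to the invariant subspace $\overline{\mathrm{Im}(\L)}=(\mathbf{I}-\mathbb{P})\X$, on which the restriction $\widetilde{\L}$ inherits the same resolvent bound away from $0$ (because $\mathbb{P}$ commutes with $R(i\alpha,\L)$ and leaves $\overline{\mathrm{Im}(\L)}$ invariant), and to invoke the exact variant of the theorem that accommodates a resolvent bound degenerating at $0$ rather than at infinity. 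Once this is done, the explicit polynomial bound $(1+t)^{-1/(3+\varepsilon)}$ claimed in Theorem \ref{theo:introSoft1} will follow from the elementary asymptotics $\vartheta(r)\sim 2\Sigma_{\max}^{2}/r^{3}$ as $r\to 0^{+}$, which entail $\vartheta_{\log}(r)\sim 8\Sigma_{\max}^{2}r^{-3}\log(1/r)$ and hence $\vartheta_{\log}^{-1}(t)\sim(\log t/t)^{1/3}$ as $t\to\infty$.
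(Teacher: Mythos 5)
Your overall strategy coincides with the paper's: both reduce the decay of $V(t)f-\mathbb{P}f$ for $f\in\mathbf{Z}$ to the decay of $V(t)\L g$, $g\in\X$, and feed the resolvent estimate of Proposition~\ref{prop:spectR} into a quantified Ingham/Tauberian theorem of Chill--Seifert type. There are, however, two imprecisions worth flagging, one of which is a genuine missing step.

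First, you invoke contractivity of $(V(t))_{t\geq0}$ as the hypothesis under which the Tauberian theorem applies, but the paper's proof hinges on a different property: since $\L$ is a \emph{bounded} generator, $(V(t))_{t\geq0}$ is a uniformly continuous, hence analytic, hence asymptotically analytic semigroup, and this is precisely the hypothesis required by the cited \cite[Corollary~2.12]{Chill}. Contractivity alone does not supply what that corollary asks for; the observation is trivial once made (because $\L\in\mathscr{L}(\X)$), but in its absence the appeal to the Tauberian theorem is not actually justified as written. Second, the tentative plan in your last paragraph --- pass to the invariant subspace $\overline{\mathrm{Im}(\L)}=(\mathbf{I}-\mathbb{P})\X$ to eliminate the singularity at $0$ --- would not work: the paper's own remark following this theorem shows that $\mathrm{Im}(\L)$ is \emph{not} closed, so $0$ remains in the spectrum of the restricted generator $\widetilde{\L}$, and the singularity of the resolvent at $0$ persists on $\overline{\mathrm{Im}(\L)}$. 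The correct way to package the reduction, which you in fact achieve earlier in your argument via $V(t)f-\mathbb{P}f=V(t)\L g$, and which the paper makes explicit, is to estimate the operator norm $\|V(t)\L R(1,\L)\|_{\mathscr{L}(\X)}$: Chill's Corollary~2.12 gives $\|V(t)\L R(1,\L)\|=\mathrm{O}(\vartheta_{\log}^{-1}(ct))$ directly from the blow-up $\|R(i\alpha,\L)\|\leq\vartheta(|\alpha|)$ near $\alpha=0$, and then $\|V(t)\L g\|\leq \|V(t)\L R(1,\L)\|\,\|(1-\L)g\|$ gives the claim. So the last paragraph of your proposal should be replaced by the analyticity observation plus a direct application of the corollary, rather than the subspace restriction.
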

\begin{proof} 
Since its generator $\L$ is bounded, the \com $(V(t))_{t \geq 0}$ is analytic and therefore asymptotically analytic \cite{engel}. Then, combining Proposition \ref{prop:spectR} with \cite[Corollary 2.12]{Chill}, we deduce that, for any $c \in (0,1)$
\begin{equation}\label{vtL}
\|V(t)\L\,R(1,\L)\|_{\mathscr{L}(\X)}=\mathrm{O}(\vartheta_{\log}^{-1}(c\,t)) \qquad \text{ as } \quad t \to \infty.\end{equation}
In particular,
$$\|V(t)h\|= \mathrm{O}(\vartheta_{\log}^{-1}(c\,t)) \qquad \text{ as } \quad t \to \infty$$
for any $h \in \mathrm{Im}(\L)$ from which we get the result since $\mathbb{P}f=V(t)\mathbb{P}f$ for any $t\geq 0$.\end{proof}

One easily deduces the following algebraic rate of convergence:
\begin{cor}
Let $f \in \mathbf{Z}$. Then, for any $\varepsilon > 0$, there exists $C > 0$ such that
\begin{equation}\label{eq:alg}
\|V(t)f-\mathbb{P}f\|_{\X} \leq C\left(1+t\right)^{-\frac{1}{3+\varepsilon}} \qquad \forall t \geq 0.\end{equation}
\end{cor}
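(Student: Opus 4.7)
The plan is to reduce the claim \eqref{eq:alg} to an explicit asymptotic analysis of the inverse function $\vartheta_{\log}^{-1}$ appearing in Theorem \ref{theo:rate}, and then patch the resulting estimate on $[T_0,\infty)$ with a trivial bound on $[0,T_0]$.

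First, I would fix any $c \in (0,1)$ and invoke Theorem \ref{theo:rate}, which yields a constant $M = M(f,c)$ and a time $T_0 = T_0(f,c) \geq 0$ such that
$$
\|V(t)f - \mathbb{P}f\|_{\X} \leq M\,\vartheta_{\log}^{-1}(ct) \qquad \forall t \geq T_0.
$$
The only work left is therefore to show that for every $\varepsilon>0$ one has $\vartheta_{\log}^{-1}(s) = O\bigl(s^{-1/(3+\varepsilon)}\bigr)$ as $s\to\infty$.

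Second, I would expand $\vartheta$ near $r=0^+$, where the singularity lives. Writing $\sqrt{r^{2}+\Sigma_{\max}^{2}} = \Sigma_{\max}\bigl(1 + \tfrac{r^{2}}{2\Sigma_{\max}^{2}} + O(r^{4})\bigr)$ one gets $1 - \Sigma_{\max}/\sqrt{r^{2}+\Sigma_{\max}^{2}} = \tfrac{r^{2}}{2\Sigma_{\max}^{2}} + O(r^{4})$, so that
$$
\vartheta(r) \sim \frac{2\Sigma_{\max}^{2}}{r^{3}}, \qquad \frac{\vartheta(r)}{r} \sim \frac{2\Sigma_{\max}^{2}}{r^{4}} \qquad (r \to 0^+).
$$
Consequently $\log\bigl(1 + \vartheta(r)/r\bigr) \sim 4\log(1/r)$ and
$$
\vartheta_{\log}(r) \sim \frac{8\,\Sigma_{\max}^{2}\,\log(1/r)}{r^{3}} \qquad (r \to 0^+).
$$

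Third, I would invert this asymptotic relation. Setting $s = \vartheta_{\log}(r)$ and taking logarithms, the relation $r^{3}s \sim 8\Sigma_{\max}^{2}\log(1/r)$ yields $\log s \sim 3\log(1/r)$ as $s\to\infty$, hence $\log(1/r) \sim \tfrac{1}{3}\log s$ and therefore
$$
\vartheta_{\log}^{-1}(s) \sim \left(\frac{8\Sigma_{\max}^{2}}{3}\right)^{1/3}\,\left(\frac{\log s}{s}\right)^{1/3} \qquad (s\to\infty).
$$
For any $\varepsilon>0$ we have $-\tfrac{1}{3}+\tfrac{1}{3+\varepsilon} = -\tfrac{\varepsilon}{3(3+\varepsilon)} < 0$, and since $(\log s)^{1/3}$ is dominated by any positive power of $s$ at infinity, we conclude that there is a constant $C_\varepsilon$ with $\vartheta_{\log}^{-1}(s) \leq C_\varepsilon\,s^{-1/(3+\varepsilon)}$ for all $s$ large enough. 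Combined with the first step, this gives \eqref{eq:alg} on $[T_0,\infty)$ for a suitable constant.

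Finally, to extend the bound to all $t \geq 0$, I would observe that $\L \in \mathscr{L}(\X)$ by Proposition \ref{propo:bounded}, so the semigroup $(V(t))_{t\geq 0}$ is norm-continuous and $\|V(t)\|_{\mathscr{L}(\X)}$ is bounded on $[0,T_0]$; hence $\|V(t)f - \mathbb{P}f\|_{\X}$ is bounded on $[0,T_0]$ by a constant depending on $f$, and one absorbs this into $C$ since $(1+t)^{-1/(3+\varepsilon)} \geq (1+T_0)^{-1/(3+\varepsilon)}$ there. The main and only substantive step is the asymptotic inversion in the third paragraph; the rest is a direct application of Theorem \ref{theo:rate} together with norm-continuity of the semigroup.
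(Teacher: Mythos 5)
Your proof is correct and takes essentially the same route as the paper's: expand $\vartheta$ near $r=0^{+}$ to obtain $\vartheta(r)\sim 2\Sigma_{\max}^{2}/r^{3}$, note that the logarithmic factor in $\vartheta_{\log}$ is subpolynomial, and invert. The only difference is in how the inversion is executed. The paper takes a shortcut: it absorbs the $\log$ into an extra power of $r$, bounding $\vartheta_{\log}(r)\leq \kappa_{\varepsilon}\,r^{-(3+\varepsilon)}$ near $0^{+}$, and inverts that coarser inequality directly to get $\vartheta_{\log}^{-1}(ct)=O(t^{-1/(3+\varepsilon)})$. You instead carry out the inversion sharply, deriving the explicit asymptotic $\vartheta_{\log}^{-1}(s)\sim\left(8\Sigma_{\max}^{2}/3\right)^{1/3}\left(\log s/s\right)^{1/3}$ and only then trading the sharp $(\log s/s)^{1/3}$ for the weaker $s^{-1/(3+\varepsilon)}$. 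Your version is a bit longer but actually establishes the strictly better decay $\|V(t)f-\mathbb{P}f\|_{\X}=O\left((\log t/t)^{1/3}\right)$ before rounding down to the stated rate, which is a nice observation. Your final step bounding the difference on $[0,T_{0}]$ via norm-continuity of $(V(t))_{t\geq 0}$ (using $\L\in\mathscr{L}(\X)$) is correct and spells out something the paper leaves implicit in the ``$\forall t\geq 0$'' of the conclusion.
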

\begin{proof}
The proof consists simply in noticing that, since
$\vartheta(r) \simeq \frac{2\Sigma_{\max}^{2}}{r^{3}}$ whenever $r \simeq 0^{+}$,
for any $\varepsilon > 0$, there is some universal constant $\kappa_{\varepsilon} > 0$ such that
$$\vartheta_{\log}(r) \leq \dfrac{\kappa_{\varepsilon}}{r^{3+\varepsilon}} \qquad \text{ for } \quad r \simeq 0^{+}.$$
One deduces from that that
$$\vartheta_{\log}^{-1}(ct)=\mathrm{O}(t^{-\frac{1}{3+\varepsilon}}) \qquad \text{ for } \quad t \to \infty$$
and the result follows. 
\end{proof}

\begin{nb} 
Notice that the above convergence result, combined with Datko-Pazy's Theorem shows in particular that $\mathrm{Im}(\L)$ is not closed in $\X$. Indeed, if it were closed, we would have $\mathbf{Z}=\X$ and the above convergence theorem would imply that for, say $p > 4$ 
$$\int_{0}^{\infty}\left\|V(t)f-\mathbb{P}f\right\|_{\X}^{p}\d t < \infty \qquad \forall f \in \X.$$ 
According to Datko-Pazy's Theorem \cite[Theorem V.1.8]{engel}, the semigroup $(V(t)(\mathbb{I-P}))_{t\geq 0}$ would then be asymptotically exponentially stable. In particular, the spectrum of its generator $\L(\mathbb{I-P})$ would be contained in $\{z \in \mathbb{C}\,;\,\mathrm{Re}z \leq -\lambda_{*}\}$ for some $\lambda_{*} > 0$ and $0$ would be an isolated eigenvalue of $\L$. We know that this cannot be the case so $\mathbf{Z} \neq \X.$
\end{nb}

\begin{nb}\label{nb:caflisch} The study of the decay rate of the Boltzmann linearized operator in the hilbert space
$$\H=L^{2}(\R^{d},\M^{-1}(v)\d v)$$
has been performed by R. Caflisch in the seminal paper \cite{caflisch} in the dimension $d=3$ for $\gamma \in (-1,0).$ The same techniques can be performed for the linear operator $\L$, and, adopting the notations of the previous result, \cite[Theorem 3.1]{caflisch} would read as follows: for any $f \in \H$ such that 
\begin{equation}\label{eq:cafl}
\|f-\varrho_{f}\M\|_{\infty,a}=\sup_{v \in \R^{d}} \exp(a\,|v|^{2})\,|f(v)-\varrho_{f}\M(v)| < \infty \qquad \text{ for some } 0 < {a} < \frac{1}{4}\end{equation}
there exists $C >0 $ and $\lambda > 0$ (explicit and depending on $\gamma$, $\|f\|_{\alpha}$) such that
$$\|V(t)f-\varrho_{f}\M\|_{\H} \leq C\,\|f-\varrho_{f}\M\|_{\infty,a}\,\exp\left(-\lambda t^{\frac{2}{2-\gamma}}\right) \qquad \forall t \geq 0.$$
Notice that, here again, $\H$ splits as $\H=\mathrm{Ker}(\L) \oplus \overline{\mathrm{Im}(\L)}$. One can get rid of the very restrictive pointwise decay \eqref{eq:cafl} using an approach similar to the one we performed for the proof of Theorem \ref{theo:rate}. Indeed, recall that $\L$ is self-adjoint in the space $\H$ \cite{caflisch}) so that, classical spectral analysis of self-adjoint operators \cite{engel} ensures that
$$\|R(i\alpha\,,\,\L)\|_{\mathscr{L}(\H)} \leq \frac{1}{|\alpha|} \qquad \forall \alpha \in \R\setminus \{0\}.$$
Then, according to \cite[Theorem 1.5]{battychill},  such an estimate is equivalent to
$$\|V(t)\L\,R(1,\L)\|_{\mathscr{L}(\H)}=\mathrm{O}\left(\frac{1}{t}\right) \qquad \text{ as } t \to \infty.$$
Arguing then as in the proof of Theorem \ref{theo:rate}, one conclude that, if $f \in \mathrm{Ker}(\L) \oplus \mathrm{Im}(\L)$, then there exists $C>0$ such that
$$\|V(t)f-\varrho_{f}\,\M\|_{\H} \leq \frac{C}{1+t} \qquad \forall t \geq 0.$$
Again, as in Remark 4.6,  $\mathrm{Im}(\L)$ is not closed.
\end{nb}
\appendix

\section{Computation of the kernel $k(v,w)$}\label{sec:appA}

This section is devoted to the computation of the gain part operator $\mathbf{K}_\gamma$ for the special case in which the collision kernel is given by
$$B(\v-\vb,\sigma)=|\v-\vb|^\gamma \qquad \text{ for } \gamma \in (-d,\infty)$$
and, in particular, does not depend on the angular cross-section. In such a case, 
$$\mathbf{K}_\gamma=\int_{\R^d \times \S} |\v-\vb|^\gamma f(v')\M(\vb')\d \vb \d\sigma.$$
We aim to prove that $\mathbf{K}_\gamma$ admits the integral representation
$$\mathbf{K}_\gamma f(v)=\int_{\R^d} k_\gamma(v,w)f(w)\d w.$$
and shall resort to the so-called Carleman representation to compute the kernel $k_\gamma(v,w)$. Namely, one considers an alternative parametrization of post-collisional velocities as
$$v'=v- \langle \v-\vb,\n\rangle \n \qquad \text{ and } \quad \vb'=\vb + \langle \v-\vb,\n\rangle \n, \qquad \n \in \S$$
for which
$$\mathbf{K}_\gamma f(v)=\int_{\R^d \times \S} \widetilde{B}(v-\vb,\n) f(v')\M(\vb')\d\vb \d \n$$
where
$$\widetilde{B}(v-\vb,\n)=2^{d-2}\,\left|\left\langle \dfrac{v-\vb}{|v-\vb|}; \n\right\rangle\right|^{d-2} B(v-\vb,\sigma).$$
We refer to \cite[Chapter 2A. 4]{villani} for details. Since $|v'-\v|=|\langle v-\vb,\n\rangle|$, one gets that $\widetilde{B}(\v-\vb,\n)=2^{d-2} \left|v'-v\right|^{d-2} \,\left|v-\vb\right|^{\gamma-(d-2)}$ and
$$\mathbf{K}_\gamma f(v)=2^{d-2} \int_{\R^d \times \S} \left|v-\vb\right|^{\gamma-(d-2)}\left|v'-v\right|^{d-2} f(v')\M(\vb')\d \vb \d\n.$$
For fixed $\n \in \S$, we perform the change of variables $\vb \mapsto u=\vb-\v$ to get
$$\mathbf{K}_\gamma f(v)=2^{d-2} \int_{\R^d \times \S} f(v+\langle u,\n\rangle \n) \M(\v+u-\langle u,\n\rangle \n)|u|^{\gamma-(d-2)}\,|\langle u, \n\rangle|^{d-2}\d u \d\n$$
and then, setting $u=r\n + z$, $z \perp \n$, $r \in \R$, we get
$$\mathbf{K}_\gamma f(v)=2^{d-2} \int_{\R \times \S} f(v+ r\n)|r|^{d-2}\d r \d\n\left(\int_{\n^\perp} \M(\v +z)\,|z+r\n|^{\gamma-(d-2)} \d z\right).$$
Setting now $w=v+r\n$, $\d w=2|r|^{1-d}\d r\d\n$, we get
$$\mathbf{K}_\gamma f(v)=2^{d-1} \int_{\R^d} f(w) \dfrac{\d w}{|v-w|} \left(\int_{(v-w)^\perp} \M(v + z)\,|z+w-v|^{\gamma-(d-2)} \d z\right)$$
i.e.
$$\mathbf{K}_\gamma f(v)= \int_{\R^d} k_\gamma(v,w)\,f(w)\d w$$
with
$$k_\gamma(v,w)=2^{d-1}\,|v-w|^{-1}\int_{(v-w)^{\perp}}  \M(v+z) \,|z+w-v|^{\gamma-(d-2)}\d z.$$
Let us compute the above kernel in a more precise way. Let $v,w \in \R^d$ be given. We denote the center of mass by $V=\dfrac{v+w}{2}$. Then, for any $z \in (v-w)^\perp$, one has
$$|v+z|^2=|V + z|^2 + \frac{1}{4}|v-w|^2 + \frac{1}{2}\left(|v|^2-|w|^2\right)$$
and, setting $V=V_0+V_\perp$ where $V_\perp \perp v-w$ and $V_0$ parallel to $(v-w)$ we get
$$|V + z|^2 =|V_0|^2 + |V_\perp + z|^2 \qquad \text{ with } \quad |V_0|^2=\dfrac{\left(|v|^2-|w|^2\right)^2}{4|v-w|^2}$$
i.e.
$$|v+z|^2 = |V_\perp + z|^2 + \dfrac{1}{4}\left[|v-w| + \dfrac{|v|^2-|w|^2}{|v-w|}\right]^2.$$
Therefore
\begin{multline*}k_\gamma(v,w)=2^{d-1}(2\pi)^{-d/2}\,|v-w|^{-1}\exp\left(-\frac{1}{8}\left[|v-w| + \dfrac{|v|^2-|w|^2}{|v-w|}\right]^2\right)\\
\int_{(v-w)^\perp} \exp\left(-\frac{|V_\perp + z|^2}{2}\right)\,\left|z +w-v \right|^{\gamma-(d-2)}\d z.\end{multline*}
We denote by $I_{\gamma}(v,w)$ this last integral:
\begin{equation*}\begin{split}
I_{\gamma}(v,w)&=\int_{(v-w)^\perp} \exp\left(-\frac{|V_\perp + z|^2}{2}\right)\,\left|z +w-v \right|^{\gamma-(d-2)}\d z\\
&=\int_{(v-w)^\perp} \exp\left(-\frac{|\xi|^2}{2}\right)\,\left|V_{\perp}-\xi +v-w \right|^{\gamma-(d-2)}\d \xi
\end{split}\end{equation*}
where we recall that $V_\perp$ is orthogonal to $v-w$. 
One sees in particular that, if $\gamma=d-2$, the last integral is constant (independent of $v,w$): 
\begin{equation*}
I_{d-2}(v,w)=\int_{\R^{d-1}} \exp\left(-\frac{|x|^2}{2}\right)\d x=(2\pi)^{\frac{d-1}{2}},\end{equation*}
so that
\begin{equation}\label{kd-2}
k_{d-2}(v,w)=2^{d-1}(2\pi)^{-1/2}\,|v-w|^{-1}\exp\left(-\frac{1}{8}\left[|v-w| + \dfrac{|v|^2-|w|^2}{|v-w|}\right]^2\right) \qquad \forall v,w .
\end{equation}
For general $\gamma \leq d-2$, one notices that, being  both $V_{\perp}$ and $\xi$ orthogonal to $v-w$:
$$I_{\gamma}(v,w)=\int_{(v-w)^{\perp}} \exp\left(-\frac{|\xi|^{2}}{2}\right)\left(|V_{\perp}-\xi|^{2}+|v-w|^{2}\right)^{\frac{\gamma-d+2}{2}}\d\xi$$
so that
\begin{equation}\label{compg}I_{\gamma}(v,w)\leq |v-w|^{\gamma-(d-2)}I_{d-2}(v,w) \qquad \forall v,w\,\:\forall \gamma \leq d-2\end{equation}
and
\begin{equation}\label{k-d-2}k_\gamma(v,w) \leq |v-w|^{\gamma-(d-2)}k_{d-2}(v,w) \qquad \forall \gamma \leq d-2.\end{equation}
Such a rough estimate is enough to prove a general property of $k_{\gamma}$ (already established in \cite[Proposition A.1]{BCL} in the case of hard-spheres in dimension $d=3$):
\begin{propo}\label{BCL} The following holds.
\begin{enumerate}
\item Assume $m(v)=\exp(-a|v|^s)$, $a >0$, $s \in (0,1]$ and set
$$H_\gamma(w)=\int_{\R^d} k_{\gamma}(v,w)m^{-1}(v)\d v, \qquad \qquad w \in \R^d, \quad \gamma \leq d-2.$$
Then, there exists a positive constant $C_0  >0$ such that
$$H_\gamma(w) \leq C_0(1 + |w|^{\gamma-s})\,m^{-1}(w) \qquad \forall w \in \R^d.$$
\item If $m(v)=\left(1+|v|^{\beta}\right)^{-1}$ for $\beta > 0$ and let $H_{\gamma}$ be defined as above. Then, there exists a positive constant $C_{1} > 0$ such that
$$H_\gamma(w) \leq C_1(1 + |w|^{\gamma-2})\,m^{-1}(w) \qquad \forall w \in \R^d.$$
\end{enumerate}
\end{propo}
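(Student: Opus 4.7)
The plan rests on the bound \eqref{k-d-2}, which reduces everything to estimating
$$\int_{\R^{d}} |v-w|^{\gamma-(d-2)} k_{d-2}(v,w)\, m^{-1}(v)\,\d v.$$
The first preliminary step is to simplify the exponent of $k_{d-2}$ given in \eqref{kd-2}. Using the identity $|v-w|^{2}+|v|^{2}-|w|^{2}=2\langle v,v-w\rangle$, I would rewrite
$$k_{d-2}(v,w) \leq C\,|v-w|^{-1}\exp\!\Bigl(-\tfrac{1}{2}\langle v,\omega\rangle^{2}\Bigr), \qquad \omega=\tfrac{v-w}{|v-w|}.$$
This is the clean form on which the rest of the argument rests.

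Next, I would pass to polar coordinates centered at $w$: setting $v=w+r\omega$, $r>0$, $\omega\in\S$, we have $\d v=r^{d-1}\d r\,\d\omega$, $|v-w|=r$, $\langle v,\omega\rangle=r+\langle w,\omega\rangle$, so
$$H_{\gamma}(w)\leq C\int_{0}^{\infty} r^{\gamma}\int_{\S}\exp\!\Bigl(-\tfrac12(r+\langle w,\omega\rangle)^{2}\Bigr)\,m^{-1}(w+r\omega)\,\d\omega\,\d r.$$
Taking $w/|w|$ as the polar axis and writing $\tau=\langle w,\omega\rangle/|w|$, then substituting $\mu=r+|w|\tau$ (so $\d\tau=\d\mu/|w|$), one gets $|w+r\omega|^{2}=|w|^{2}-r^{2}+2r\mu$. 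Using $(1-\tau^{2})^{(d-3)/2}\leq 1$ for $d\geq 3$, this reduces the estimate to
$$H_{\gamma}(w) \leq \frac{C}{|w|}\int_{0}^{\infty}r^{\gamma}\,\d r\int_{r-|w|}^{r+|w|}e^{-\mu^{2}/2}\,m^{-1}\bigl(\sqrt{|w|^{2}-r^{2}+2r\mu}\bigr)\,\d\mu.$$

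For exponential weights $m^{-1}(v)=e^{a|v|^{s}}$, the key point is that the Gaussian in $\mu$ localizes to $|\mu|\lesssim 1$, and for $r<|w|$ one has $|v|^{2}=|w|^{2}-r^{2}+2r\mu\leq|w|^{2}$ to leading order. Taylor expansion of $x\mapsto x^{s/2}$ at $x=|w|^{2}-r^{2}$ gives
$$(|w|^{2}-r^{2}+2r\mu)^{s/2} \approx (|w|^{2}-r^{2})^{s/2} + r\mu\,(|w|^{2}-r^{2})^{s/2-1},$$
so the $\mu$ integration becomes a shifted Gaussian bounded by a constant times $\exp\bigl(a(|w|^{2}-r^{2})^{s/2}\bigr)$. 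Next, writing $(|w|^{2}-r^{2})^{s/2}=|w|^{s}(1-r^{2}/|w|^{2})^{s/2}$ and Taylor expanding in $r^{2}/|w|^{2}$ yields $|w|^{s}-(s/2)r^{2}|w|^{s-2}+\mathrm{O}(r^{4}|w|^{s-4})$. The $r$ integrand therefore factors essentially as $m^{-1}(w)\cdot r^{\gamma}\exp\!\bigl(-\frac{as}{2}r^{2}|w|^{s-2}\bigr)$, a Gaussian at effective scale $r\sim|w|^{1-s/2}$. Standard evaluation gives the desired polynomial prefactor in $|w|$. The algebraic case is handled analogously: one replaces $|v|^{s}$ by $\log(1+|v|^{\beta})$, whose Taylor expansion $\log(1-r^{2}/|w|^{2})^{\beta/2}\approx -(\beta/2)r^{2}/|w|^{2}$ gives an effective Gaussian scale $r\sim|w|$ and leads to the decay exponent $\gamma-2$.

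The principal technical obstacle is the degeneration of $(|w|^{2}-r^{2})^{s/2}$ near the transition $r\approx|w|$, where the Taylor expansion breaks down and one cannot directly invoke the Laplace asymptotics. I would handle this by splitting the $r$ integration at some fixed threshold $c|w|$ with $c<1$: on $[0,c|w|]$ the expansion above is uniformly valid, while on $(c|w|,\infty)$ the Gaussian in $\mu=r+|w|\tau$ can be directly exploited, since $\mu$ cannot simultaneously be small and lie in $[r-|w|,r+|w|]$ unless $\tau\approx-r/|w|$ which confines $\omega$ to a narrow angular window of measure $\mathrm{O}(|w|^{-1})$, producing only a harmless correction to the main bound.
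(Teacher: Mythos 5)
Your reduction via \eqref{k-d-2} to the hard-sphere kernel, the passage to polar coordinates centered at $w$, and the substitution $\mu=r+|w|\tau$ all match what the paper does (with $\varrho,\theta$ in place of your $r,\tau$). The divergence is in the final one-dimensional $r$-integral, and that is where there is a genuine gap.

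You Taylor-expand $(|w|^{2}-r^{2}+2r\mu)^{s/2}$ twice (first in $\mu$, then in $r^{2}/|w|^{2}$), producing a Gaussian in $r$ at scale $|w|^{1-s/2}$, and assert that ``standard evaluation gives the desired polynomial prefactor.'' It does not, except at $\gamma=1$. After the $\frac{1}{|w|}$ factor coming from the $\mu$-substitution, your Gaussian yields
$$\frac{1}{|w|}\int_{0}^{c|w|}r^{\gamma}\,e^{-\frac{as}{2}r^{2}|w|^{s-2}}\,\d r\;\asymp\;|w|^{(\gamma+1)(1-s/2)-1}=|w|^{\gamma-(\gamma+1)s/2},$$
whereas the claimed exponent is $\gamma-s$. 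The discrepancy is $\frac{s(1-\gamma)}{2}$: for $\gamma>1$ your bound is stronger (which of course still proves the Proposition), but for $\gamma<1$ it is strictly weaker than $|w|^{\gamma-s}$ and the argument does not close. The same accounting for the algebraic weight, where your effective scale is $r\sim|w|$, gives $\sim|w|^{\gamma}\,m^{-1}(w)$, not the claimed $|w|^{\gamma-2}\,m^{-1}(w)$. In short, the concavity bound $(|w|^{2}-r^{2})^{s/2}\le |w|^{s}-\frac{s}{2}r^{2}|w|^{s-2}$ is lossy: it captures the right Gaussian width but discards exactly the $\gamma$-dependent sharpness the statement requires.

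The paper avoids this by a genuinely different one-dimensional device. After performing the $\theta$-integral (which produces the $\frac{1}{|w|}$), it substitutes $x=|w|^{2}-\varrho^{2}/3$, so that $\varrho^{\gamma}\d\varrho\propto(|w|^{2}-x)^{\frac{\gamma-1}{2}}\d x$, bounds the prefactor by $C|w|^{\gamma-1}$ (note this step uses $\gamma\ge 1$), and then applies the elementary primitive-type estimate $\int_{0}^{R}e^{ax^{s/2}}\d x\le\frac{2}{as}\,R^{1-s/2}\,e^{aR^{s/2}}$. The net power $|w|^{-1}\cdot|w|^{\gamma-1}\cdot|w|^{2-s}=|w|^{\gamma-s}$ is exactly what is claimed. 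So the key idea you are missing is this change of variable combined with the integral estimate of $\int e^{ax^{s/2}}\d x$, which preserves the $\gamma$-dependence. Separately, your treatment of the region $r>c|w|$ is only qualitative; the paper instead splits according to the sign of $\varrho+|w|\cos\theta$ (the sets $A_{1}$ and $A_{2}$), which leads to a clean pointwise estimate on the complementary region rather than a hand-waved angular-measure argument.
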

\begin{proof} In both cases, recall that, according to \eqref{k-d-2}, one has
$$H_\gamma(w) \leq  \int_{\R^d} |v-w |^{\gamma-(d-2)}k_{d-2}(v,w)m^{-1}(v)\d v$$
where
 $k_{d-2}(v,w)$ is given by \eqref{kd-2}. Taking into account that
$\frac{|v|^2-{|w|}^2}{|v-w|} - |v-w|
  =
  2 \frac{v-w}{|v-w|}\cdot w$ we may rewrite \eqref{kd-2} as
\begin{equation}
  \label{eq:k2}
  k_{d-2}(v,w)
  = 2^{d-1}(2\pi)^{-1/2} |v-w|^{-1} \exp \left\{
    -\frac{1}{2}
    \left(
 |v-w| +  \frac{v-w}{|v-w|} \cdot w
    \right)^2
  \right\}.
\end{equation}
Assume now that $m$ is an exponential weight, i.e. $m(v)=\exp(-a|v|^{s})$ for $a > 0$ and $s \in (0,1].$ Performing the change of variables $u=v-w$ and using spherical coordinates (with $\varrho=|u|$ and $\varrho|w|\cos \theta=u \cdot w$)  one gets therefore
\begin{equation*}
H_\gamma(w) \leq C_d |\mathbb{S}^{d-2}| \int_{0}^\infty \d\varrho \int_0^\pi G(\varrho, \theta) \d\theta
\end{equation*}
for some universal constant $C_d > 0$ and where
$$G(\varrho,\theta)=\varrho^{\gamma} \exp \left\{
      -\frac{1}{2}
      \bigg(\varrho +  |w|\cos\theta
      \bigg)^2
      +  a       \bigg(\varrho^2+|w|^2 + 2\varrho|w|\cos \theta \bigg)^{s/2}\right\}\left(\sin\theta\right)^{d-2}.$$
Setting now  $A=[0,\infty) \times [0,\pi]$, we  split $A$ into the two regions of integration:
$$A_1= \{(\varrho,\theta) \in A \, ;\,3 |w| \cos\theta \geq
    - 2 \varrho \} \qquad \text{ and } \qquad A_2=A\setminus A_1.$$
Notice first that, since $\cos\theta \leq 1$ and $s\in (0,1]$
$$\exp\left(a(\varrho^2+|w|^2+2\varrho|w|\cos\theta)^{s/2}\right) \leq \exp\left(a(\varrho+|w|)^{s}\right) \leq \exp(a \varrho^s)\exp(a |w|^s) \qquad \forall (\varrho,y) \in A.$$
Moreover, since $ \varrho +  |w|\cos\theta \geq \varrho/3$ for any $(\varrho,\theta) \in A_1$ we have
  \begin{equation}\begin{split}
    \label{eq:region}
    \int_{A_1} G(\varrho,\theta)   \d\varrho\d \theta
    &\leq \exp(a |w|^s) \int_0^\infty \varrho^{\gamma}\d\varrho \! \int_{0}^\pi  \exp
    \left( -\frac{1}{18}\varrho^2 + a \varrho^s \right) \,(\sin\theta)^{d-2}\,\d \theta
    \\
  &\leq C_1 \exp(a |w|^s) = C_1 m^{-1}(w) \end{split}\end{equation}
  since the integral is convergent. Let us estimate now the integral over $A_2$.  For any $(\varrho,\theta) \in A_2$, one notices first that
$$\varrho^2 + |w|^2 + 2 \varrho|w|\cos \theta < |w|^2 -\varrho^2/3 \qquad \text{ and } \qquad \varrho \leq (3/2) |w|,$$
so that
  \begin{multline}
    \label{eq:region2}
    \int_{A_2}  G(\varrho,\theta) \d\varrho\d \theta
    \\
    \leq \int_0^{(3/2)|w|} \varrho^{\gamma}\exp \left( a \left(|w|^2 -\varrho^2/3
      \right)^{s/2} \right) \d\varrho\int_{0}^\pi \exp \left( -\frac{1}{2} \left( \varrho +   |w|\cos\theta \right)^2 \right) \,\left(\sin\theta\right)^{ d-2}\,\d \theta
  \end{multline}
  To carry out the $\theta$-integral, perform the change of variables  $z =
   \varrho + |w|\cos \theta$ to get
  \begin{equation*}
    \int_{0}^\pi
    \exp \left(  -\frac{1}{2} \left( \varrho +   |w|\cos\theta \right)^2
    \right)
     \,\left(\sin \theta\right)^{d-2}\,\d \theta
    \leq
    \frac{1}{|w|}
    \int_{-\infty}^{\infty}
    \exp \left( - z^2/2
    \right)
    \,\d z
    =
    \frac{C_2}{|w|}
  \end{equation*}
  for some explicit $C_2 >0$ \footnote{notice that this proof needs  $d \geq 3$}. Plugging this  in \eqref{eq:region2} we obtain
$$\int_{A_2}  G(\varrho, \theta)  \d\varrho\d \theta
    \leq \frac{C_2}{|w|}\int_0^{(3/2)|w|} \varrho^{\gamma}\exp \left( a \left(|w|^2 -\varrho^2/3
      \right)^{s/2} \right) \d\varrho.$$
Setting now  $x=|w|^2 - \varrho^2/3$, we obtain
$$\int_{A_2}  G(\varrho,\cos \theta)  \d\varrho\d \theta\leq \dfrac{3C_2}{2\,|w|}3^{\frac{\gamma-1}{2}}\int_{|w|^2/4}^{|w|^2}\left(|w|^2-x\right)^{\frac{\gamma-1}{2}}\exp(a x^{s/2})\d x.$$
Since $\tfrac{3}{4}|w|^2 \leq |w|^2-x \leq |w|^2$ for any $x \in (\tfrac{1}{4}|w|^2,|w|^2)$, one sees that
\begin{equation}\label{eq:region2-2}\int_{A_2}F(\varrho,y)\d\varrho\d y \leq C_3|w|^{\gamma-2}\int_{0}^{|w|^2}\exp(a x^{s/2})\d x\end{equation}
for some explicit $C_3 > 0$ depending only on $\gamma$. We observe now that, for any $r >0$, \begin{equation*}\begin{split}
    \int_0^r \exp ( a x^{s/2} ) \,\d x
    &\leq \frac{2}{as}
    r^{1-s/2} \int_0^r \frac{as}{2} x^{s/2-1}\exp ( a x^{s/2}
    ) \,\d x
    \\
    &= \frac{2}{as} r^{1-s/2} \int_0^r \frac{\d}{\d x}\exp  ( a
      x^{s/2} ) \,\d x \leq \frac{2}{as} r^{1-s/2} \exp ( a
      r^{s/2} ).\end{split}\end{equation*}
 Using this in \eqref{eq:region2-2} for $r = |w|^2$ we get
\begin{equation}
    \label{eq:region2-3}
    \int_{A_2}F(\varrho,y)\d\varrho\d y
    \leq
    \frac{2C_3}{a\,s}
    |w|^{\gamma-s}
    \exp \left( a |w|^{s} \right).
  \end{equation}
    Putting together \eqref{eq:region} and \eqref{eq:region2-3} we finally obtain the result.
    
  Assume now that $m^{-1}(v)=1+|v|^{\beta}$ for $\beta >0.$ One argues in a similar way. Namely, as above,  
  \begin{equation*}
H_\gamma(w) \leq C_d |\mathbb{S}^{d-2}| \int_{A} G(\varrho, \theta) \d\varrho\d\theta
\end{equation*}
for some universal constant $C_d > 0$ and where now
$$G(\varrho,\theta)=\varrho^{\gamma} \exp \left\{
      -\frac{1}{2}
      \bigg(\varrho +  |w|\cos\theta
      \bigg)^2\right\}
       \left(1+       \bigg(\varrho^2+|w|^2 + 2\varrho|w|\cos \theta \bigg)^{\beta/2}\right)\left(\sin\theta\right)^{d-2}.$$
As above, one splits the region $A=[0,\infty) \times [0,\pi]$  into the two above regions $A_{1}$ and $A_{2}$. One clearly has
$$  \left(1+       \bigg(\varrho^2+|w|^2 + 2\varrho|w|\cos \theta \bigg)^{\beta/2}\right)\leq (1+2^{\beta-1})\left(1+\varrho^{\beta}\right)\left(1+|w|^{\beta}\right)$$
from which, as above,
$$\int_{A_{1}}G(\varrho,\theta)\d\varrho\d\theta \leq \left(1+2^{\beta-1}\right)m^{-1}(w)\int_{A_{1}}\varrho^{\gamma}\exp\left(-\frac{1}{18}\varrho^{2}\right)\left(1+\varrho^{\beta}\right)\left(\sin\theta\right)^{d-2}\d\varrho\d\theta$$
so that there is $C_{1} > 0$ such that
\begin{equation}\label{eq:reg1}
\int_{A_{1}}G(\varrho,\theta)\d\varrho\d\theta \leq C_{1}m^{-1}(w).\end{equation}
Arguing now as above, one gets that
\begin{multline*}
\int_{A_{2}} G(\varrho,\theta)\d\varrho\d\theta \leq \int_{0}^{\frac{3}{2}|w|^{2}}\varrho^{\gamma}\left(1+\left(|w|^{2}-\frac{\varrho^{2}}{3}\right)^{\frac{\beta}{2}}\right)\d\varrho\\
\int_{0}^{\pi}\exp\left(-\frac{1}{2}\left(\varrho+|w|\cos\theta\right)^{2}\right) \left(\sin\theta\right)^{d-2}\d\theta\\
\leq \frac{C}{|w|}\int_{0}^{\frac{3}{2}|w|}\varrho^{\gamma}\,\left(1+\left(|w|^{2}-\frac{\varrho^{2}}{3}\right)^{\frac{\beta}{2}}\right)\d\varrho\end{multline*}
for some positive constant $C >0$. One computes the above integral as in the previous part and obtain that there exists a positive constant $C_{2} > 0$ such that
$$\int_{0}^{\frac{3}{2}|w|}\varrho^{\gamma}\,\left(1+\left(|w|^{2}-\frac{\varrho^{2}}{3}\right)^{\frac{\beta}{2}}\right)\d\varrho \leq C_{2}\,|w|^{\gamma-1}\int_{\frac{|w|^{2}}{4}}^{|w|^{2}}\left(1+x^{\beta/2}\right)\d x \leq C_{2}|w|^{\gamma-1}m^{-1}(w).$$
Gathering all together, we get that there exists $C_{3} > 0$ such that
\begin{equation}\label{eq:reg2}
\int_{A_{2}}G(\varrho,\theta)\d\varrho\d\theta \leq C_{3}\,|w|^{\gamma-2}\,m^{-1}(w).\end{equation}
Combining \eqref{eq:reg1} and \eqref{eq:reg2} we get the desired conclusion.
    \end{proof}
     
\begin{nb}
Notice that the above computations are valid for any $\gamma \in (-d,\infty)$ (i.e. it is valid for both hard and soft potentials). However, it turns out to be particularly relevant for hard potentials $\gamma \in [0,d-2]$. For soft potentials, it will become inoperative as explained by the following reasoning. Recalling that $\X=L^1(\R^d,\d\nu(v))$ with $\d\nu(v)=m^{-1}(v)\d v$, one can write $\mathbf{K}_\gamma$ as an integral operator \emph{for the measure $\d\nu$}
$$\mathbf{K}_\gamma f(v)=\int_{\R^d}\mathbf{k}_\gamma(v,w) f(w)\d\nu(w) $$
by simply setting
$$\mathbf{k}_\gamma(v,w)=k_\gamma(v,w)m(w), \qquad \forall v,w \in \R^d.$$
In such a case, the above Proposition asserts that
$$\int_{\R^d}\mathbf{k}_\gamma(v,w)\d\nu(v)=m(w)H_\gamma(w) \leq C_0\left(1+|w|^{\gamma-\alpha}\right)$$
where $\alpha=s$ if $m(v)=\exp(-a|v|^{s})$, $s \in (0,1]$ and $\alpha=2$ if $m(v)=(1+|v|^{\beta})^{-1}$ $(\beta > 0).$ 
Since, for $\gamma \geq 0$ the mapping
$$w \mapsto \frac{1+|w|^{\gamma-\alpha}}{(1+|w|)^\gamma} \quad \text{ is bounded}$$
we see that, for \emph{hard potentials} there exists $\vartheta \in L^\infty(\R^d)$ such that
$$\int_{\R^d}\mathbf{k}_\gamma(v,w)\d\nu(v) \leq\vartheta(w)\,\Sigma(w) \qquad \text{ for a. e. } w \in \R^d$$
i.e. \cite[Inequality (5)]{MMK} is satisfied. Notice however that nothing guarantees that the subcriticality condition \cite[Inequality (6)]{MMK} is met in $\X$. It it not clear if the above is still true for soft potential, i.e. whenever $\gamma < 0$.
 \end{nb}
    
We go on with properties of the $k_{\gamma}(v,w)$ for positive $\gamma$. First of all, the detailed balance law holds true:
$$k_{\gamma}(v,w)\M(w)=k_{\gamma}(w,v)\M(v) \qquad \forall v,w \in \R^d \times \R^d$$
This leads us to define
$$p_{\gamma}(v,w)=\M^{-1/2}(\v)k_{\gamma}(\v,w){\M}^{1/2}(w),\qquad \qquad \v,w \in \R^d \times \R^d,$$
so that $p_{\gamma}(v,w)=p_{\gamma}(w,v)$. Notice that, 
$$p_{\gamma}(v,w) \leq |v-w|^{\gamma-d+2}\M^{-1/2}k_{d-2}(\v,w)\M^{1/2}(w)$$
from which we easily see that
$$p_{\gamma}(v,w) \leq C\,|\v-w|^{\gamma-d+1}\exp\left\{-\frac{1}{8}\left(|v-w|^{2}+\frac{\left(|v|^{2}-|w|^{2}\right)^{2}}{|\v-w|^{2}}\right)\right\} \qquad \forall \v,w \in \R^{2d}.$$

This yields to the following whose prove is a direct adaptation of that given in \cite[Lemma 3.2]{arlotti}.
\begin{lemme}\label{lemmG}
For any $\gamma > \frac{d-2}{2}$, there exists $C >0$
such that
$$\int_{\R^d}|p_{\gamma}(v,w)|^2  \d w  \leq \dfrac{C}{(1+|v|)}, \qquad \forall v \in \R^d.$$
\end{lemme}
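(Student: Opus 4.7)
The plan is to use the pointwise estimate
\[
p_\gamma(v,w) \leq C\,|v-w|^{\gamma-d+1}\exp\!\left\{-\tfrac{1}{8}\Bigl(|v-w|^{2}+\tfrac{(|v|^{2}-|w|^{2})^{2}}{|v-w|^{2}}\Bigr)\right\}
\]
provided just before the statement and to reduce the $L^{2}$ integral to a one-dimensional convergent Gaussian integral, with the factor $|v|^{-1}$ emerging from the transverse (angular) directions.

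First, I would make the translation $u = w - v$, so that $|v-w|=|u|$ and
\[
(|v|^{2}-|w|^{2})^{2}/|v-w|^{2} = (2v\cdot u + |u|^{2})^{2}/|u|^{2} = (2\,v\cdot u/|u| + |u|)^{2}.
\]
Passing to polar coordinates $u=\rho\sigma$ with $\rho>0$ and $\sigma \in \S$, so that $v\cdot u/|u| = v\cdot\sigma$ and $du = \rho^{d-1}\d\rho\d\sigma$, the $L^2$ integral becomes
\begin{equation*}
\int_{\R^d}p_\gamma(v,w)^{2}\d w \leq C^{2}\int_{0}^{\infty}\rho^{2\gamma-d+1}\exp\!\bigl(-\rho^{2}/4\bigr)\left(\int_{\S}\exp\!\bigl(-\tfrac{1}{4}(2v\cdot\sigma+\rho)^{2}\bigr)\d\sigma\right)\d\rho.
\end{equation*}

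Next, I would bound the inner angular integral by disintegrating on the line spanned by $v$: write $\cos\theta = v\cdot\sigma/|v|$ and $\d\sigma=|\mathbb{S}^{d-2}|\sin^{d-2}\theta\,\d\theta$, then substitute $s = 2|v|\cos\theta + \rho$, giving $\d s = -2|v|\sin\theta\,\d\theta$ and in particular $\d\theta \leq (2|v|)^{-1}(\sin\theta)^{-1}\d s$. After bounding the spurious factor $(1-((s-\rho)/2|v|)^{2})^{(d-3)/2}\leq 1$ and extending the integration to the whole real line, one obtains
\[
\int_{\S}\exp\!\bigl(-\tfrac{1}{4}(2v\cdot\sigma+\rho)^{2}\bigr)\d\sigma \leq \frac{|\mathbb{S}^{d-2}|}{2|v|}\int_{\R}\exp(-s^{2}/4)\,\d s = \frac{C_{1}}{|v|},
\]
uniformly in $\rho$. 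Plugging this back, since the outer integral $\int_{0}^{\infty}\rho^{2\gamma-d+1}\exp(-\rho^{2}/4)\d\rho$ converges exactly when $2\gamma-d+1>-1$, i.e.\ $\gamma>(d-2)/2$, one gets an estimate of the form $C_{2}/|v|$ valid for all $v \neq 0$.

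Finally, to handle the region $|v|$ small (where the $1/|v|$ bound is useless), I would simply drop the Gaussian factor involving $v$ and get the trivial bound
\[
\int_{\R^d}p_\gamma(v,w)^{2}\d w \leq C^{2}|\S|\int_{0}^{\infty}\rho^{2\gamma-d+1}\exp(-\rho^{2}/4)\d\rho =: C_{3}<\infty,
\]
again by the hypothesis $\gamma>(d-2)/2$. Combining both regimes gives the desired bound $C/(1+|v|)$. The only delicate point is the change of variables in the angular integral, where one must use $d\geq 3$ (so that $\sin^{d-2}\theta$ cancels against the Jacobian) to get the clean $1/|v|$ factor; this is the same mechanism as in \cite[Lemma~3.2]{arlotti}, and for $d=2$ a separate (easier) argument would be required, but the statement is concerned with $d\geq 3$.
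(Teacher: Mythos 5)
Your proof is correct and takes essentially the same route as the paper's: the same pointwise bound on $p_\gamma$, the same translation-plus-polar-coordinates reduction to an integral over $(\rho,\theta)$, the same extraction of the factor $1/|v|$ from the angular Jacobian via a substitution that uses the $\sin^{d-2}\theta$ weight (requiring $d\geq 3$), and the same observation that $\gamma>(d-2)/2$ makes the radial integral $\int_0^\infty\rho^{2\gamma-d+1}e^{-\rho^2/4}\,\d\rho$ converge, which covers the small-$|v|$ regime. The paper phrases the substitution as a two-step change of variables $(\rho,\theta)\mapsto(x,y)\mapsto(t,s)$ rather than your direct $s=2|v|\cos\theta+\rho$, but the mechanism and hypotheses are identical.
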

\begin{proof} We use the above bound to get that there exists $C >0$ such that
\begin{multline*}
\int_{\R^{d}}p_{\gamma}(v,w)^{2}\d w \leq C \int_{\R^{d}}|v-w|^{2\gamma+2-2d}\exp\left\{-\frac{1}{4}\left(|v-w|^{2}+\dfrac{\left(|v|^{2}-|w|^{2}\right)^{2}}{|v-w|^{2}}\right)\right\}\d w\\
\leq C \int_{\R^{d}}|u|^{2\gamma+2-2d}\exp\left\{-\frac{1}{4}\left(|u|^{2}+\dfrac{\left(|v|^{2}-|u-v|^{2}\right)^{2}}{|u|^{2}}\right)\right\}\d u
\end{multline*}
where we set $u=v-w$. Using spherical coordinates $u=\varrho\,\d\n$ and denoting by $\theta$ the angle between $u$ and $v$, one readily gets
\begin{multline*}
\int_{\R^{d}}p_{\gamma}(v,w)^{2}\d w \leq C \int_{0}^{\infty}\varrho^{2\gamma+1-d}\d\varrho\int_{\S}\exp\left(-\frac{1}{4}\left(\varrho^{2}+(2|v|\cos\theta-\varrho)^{2}\right)\right)\d\n\\
=C \int_{0}^{\infty}\varrho^{2\gamma+1-d}\d\varrho\int_{0}^{\pi}\sin^{d-2}(\theta)\exp\left(-\frac{1}{4}\left(\varrho^{2}+(2|v|\cos\theta-\varrho)^{2}\right)\right)\d\theta.\end{multline*}
Notice that the assumption $2\gamma+1-d > 0$ implies
$$\sup_{v \in \R^{d}}\int_{\R^{d}}p_{\gamma}(v,w)^{2}\d w \leq C \pi \int_{0}^{\infty}\varrho^{2\gamma+1-d} \exp\left(-\frac{\varrho^{2}}{4}\right)\d\varrho < \infty.$$
Set for simplicity
$$I(v)=\int_{0}^{\infty}\varrho^{2\gamma+1-d}\d\varrho\int_{0}^{\pi}\sin^{d-2}(\theta)\exp\left(-\frac{1}{4}\left(\varrho^{2}+(2|v|\cos\theta-\varrho)^{2}\right)\right)\d\theta, \qquad v \in \R^{d}.$$
As in \cite[Lemma 3.1]{arlotti}, perform the change of variables $x=\varrho/|v|-2\cos\theta$, $y=\varrho/|v|$, one gets 
\begin{multline*}
I(v)=\frac{|v|^{2\gamma+2-d}}{2}\int_{\Omega}y^{2\gamma+1-d}\left(1-\frac{(x-y)^{2}}{4}\right)^{\frac{d-3}{2}}\exp\left\{-\frac{|v|^{2}}{4}\left(x^{2}+y^{2}\right)\right\}\d x\d y\\
\leq \frac{|v|^{2\gamma+2-d}}{2}\int_{\Omega}y^{2\gamma+1-d}\exp\left\{-\frac{|v|^{2}}{4}\left(x^{2}+y^{2}\right)\right\}\d x\d y\end{multline*}
where $\Omega=\{(x,y) \in \R^{2}\,;\,y \geq 0\,;\,|x-y|\leq 2\}.$ Changing the variables again as $t=|v|x/2$ and $s=|v|y/2$ one finally gets
$$I(v) \leq \frac{2^{2\gamma+2-d}}{|v|}\int_{\R}\d t\int_{0}^{\infty}\exp\left\{-\left(t^{2}+s^{2}\right)\right\}\,s^{2\gamma+1-d}\d s$$
and using again that $2\gamma+1-d$ one sees that this last integral is convergent which gives the conclusion.
\end{proof}

\section{Quantitative estimates  of the spectral gap for hard-potentials}\label{sec:appB}

We present here a quantitative version of Theorem \ref{theo:introHard} which relies on recent advances on the so-called factorisation method and enlargement of the functional space, initiated in \cite{Mo} for the nonlinear Boltzmann equation and extended to an abstract framework in \cite{gualdani}. The method consists in using the fact that, in the hilbert space 
$$\mathcal{H}=L^{2}(\R^{d},\M^{-1}\d v)$$
the spectral gap of the linear Boltzmann operator $\L$ is explicit (at least for hard-spheres interactions \cite{LMT} but the approach given then can be extended easily to the general hard potential dealt with in this paper). In particular, since $\L$ is self-adjoint in this space, the decay of the associated semigroup is directly deduced from the spectral structure of the generator. 

To avoid confusion, in this section, we shall denote $\L_{2}$ the linear Boltzmann in the Hilbert space $\mathcal{H}$ while $\L_{1}$ will stand for the linear Boltzmann operator in $\X$ where $\X$ has been defined in the Introduction:
$$\X=L^{1}(\R^{d},m^{-1}(v)\d v)$$
where the weight $m$ is given by \eqref{expwe} or \eqref{algwe}. We also denote by $(V_{2}(t))_{t\geq 0}$ and $(V_{1}(t))_{t \geq 0}$ the $C_{0}$-semigroups generated respectively by $\L_{2}$ and $\L_{1}.$ 

Notice that we restrict ourselves, for simplicity, to the case in which $b \in L^{\infty}(\S)$, which, as seen in Section \ref{sec:hard} is no restriction. Then, again, there is no loss in generality in assuming that $b$ is constant, $b\equiv 1.$ In such a case, the linear operator $K$ corresponds simply to $\mathbf{K}_{\gamma}.$

We begin by recalling the following result (see \cite{LMT, levermore})
\begin{theo}
For $\gamma \in [0,d-2]$, the linear operator $(\L_{2},\D(\L_{2})$ is negative self-adjoint in $\mathcal{H}$ where
$$\D(\L_{2})=\left\{f \in \mathcal{H}\,;\,\Sigma f \in \mathcal{H}\right\}=L^{2}(\R^{d},\left(1+|v|^{\gamma}\right)\M^{-1}(v)\d v).$$
Moreover, $\mathrm{Ker}(\L_{2})=\mathrm{span}(\M)$ and there exists some \emph{explicit} $\mu_{2} > 0$ such that
$$-\int_{\R^{d}}f\,\L_{2}f\,\M^{-1}\d v \geq \mu_{2}\|f\|_{\mathcal{H}} \qquad \forall f \in \D(\L_{2}) \quad \text{ with } \quad \int_{\R^{d}}f(v)\d v=0.$$
As a consequence, 
$$\|V_{2}(t)f -\varrho_f \M \|_{\mathcal{H}} \leq  \exp(-\mu_2 t)\|f \|_{\mathcal{H}} \qquad \text{ for all   }  f \in \mathcal{H}, \quad \forall t \geq 0$$
where $\varrho_f=\ds \int_{\R^d} f(v)\d v$ for any $f \in \mathcal{H}$.
\end{theo}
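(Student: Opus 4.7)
The plan is to exploit the detailed balance relation $k_\gamma(v,w)\M(w) = k_\gamma(w,v)\M(v)$ derived in Appendix \ref{sec:appA} in order to realise $\L_2$ as a self-adjoint operator on $\mathcal{H}$, and then to combine Weyl's theorem with a direct dissipation-type identity to extract the spectral gap.

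First, I would fix the domain $\D(\L_2) = \{f \in \mathcal{H} : \Sigma f \in \mathcal{H}\}$, which coincides with $L^2(\R^d, (1+|v|^\gamma)\M^{-1}\d v)$ by virtue of the two-sided bound \eqref{sigma} on $\Sigma$. The multiplication operator $Tf = -\Sigma f$ is manifestly self-adjoint on this domain. For the gain part $K$, a direct computation based on the detailed balance identity yields $\langle Kf, g\rangle_{\mathcal{H}} = \langle f, Kg\rangle_{\mathcal{H}}$ for every $f, g \in \D(\L_2)$, so $K$ is symmetric. Setting $h = f/\M$ and using the pre/post collisional involution together with the elastic identity $\M(v)\M(\vb) = \M(v')\M(\vb')$, a standard symmetrisation argument produces the dissipation identity
$$-\langle \L_2 f, f\rangle_{\mathcal{H}} = \frac{1}{2}\int_{\R^{2d}\times \S} B(v-\vb,\sigma)\,\M(v)\M(\vb)\bigl(h(v')-h(v)\bigr)^2\d v\d\vb\d\sigma \geq 0.$$
This shows at once that $\L_2$ is negative and that $\langle \L_2 f, f\rangle_{\mathcal{H}} = 0$ forces $h$ to be collisionally invariant hence constant, so $\mathrm{Ker}(\L_2) = \mathrm{span}(\M)$. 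Self-adjointness of $(\L_2, \D(\L_2))$ then follows from the Kato--Rellich theorem once the symmetric perturbation $K$ is shown to be $T$-bounded with relative bound zero (which is inherited from the kernel estimates of Proposition \ref{BCL} applied with $m \equiv 1$ and integrated against $\M^{-1}$).

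The spectral gap is then produced through Weyl's theorem. The symmetrised kernel $p_\gamma(v,w) = \M^{-1/2}(v) k_\gamma(v,w)\M^{1/2}(w)$, estimated in Lemma \ref{lemmG} and extended by the comparison inequality \eqref{compg} to the full range $\gamma \in [0,d-2]$, furnishes a compact operator $K R(\lambda, T)$ on $\mathcal{H}$ for any $\lambda > 0$. Therefore $\mathfrak{S}_{\mathrm{ess}}(\L_2) = \mathfrak{S}_{\mathrm{ess}}(T) = \mathfrak{S}(T) \subset (-\infty, -\eta]$ with $\eta = \inf_v \Sigma(v) > 0$, and in the gap $(-\eta, 0]$ the spectrum of $\L_2$ consists of at most finitely many real eigenvalues of finite multiplicity. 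Since $\L_2 \leq 0$ and $0$ is a simple eigenvalue with eigenspace $\mathrm{span}(\M)$, there exists $\mu_2 > 0$ such that $\mathfrak{S}(\L_2) \subset \{0\} \cup (-\infty, -\mu_2]$; by the spectral theorem this reads as the Poincaré inequality $-\langle \L_2 f, f\rangle_{\mathcal{H}} \geq \mu_2 \|f\|_{\mathcal{H}}^2$ whenever $f \perp \M$ in $\mathcal{H}$, which, since $\langle f,\M\rangle_{\mathcal{H}} = \int f\d v$, is equivalent to $\varrho_f = 0$.

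The semigroup estimate is then an immediate consequence of functional calculus. Writing $f = \varrho_f \M + (f - \varrho_f \M)$, the two summands are orthogonal in $\mathcal{H}$, the first is invariant under $V_2(t)$, and $\L_2 \leq -\mu_2$ on the orthogonal complement of $\mathrm{span}(\M)$, so
$$\|V_2(t)f - \varrho_f\M\|_{\mathcal{H}} = \|V_2(t)(f - \varrho_f\M)\|_{\mathcal{H}} \leq e^{-\mu_2 t}\|f - \varrho_f\M\|_{\mathcal{H}} \leq e^{-\mu_2 t}\|f\|_{\mathcal{H}}.$$
The main obstacle is the \emph{quantitative} determination of $\mu_2$: the qualitative existence follows cheaply from the compactness argument above, but producing an explicit constant depending only on $d$, $\gamma$ and $\|b\|_{L^1(\S)}$ requires a more delicate energy-method analysis of the Dirichlet form, as carried out in \cite{LMT} for hard spheres in dimension three and extended to the range $\gamma \in [0,d-2]$ by the weighted coercivity machinery of \cite{levermore}.
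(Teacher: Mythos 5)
You should first note that the paper itself does not prove this theorem: it is recalled, with references, from \cite{LMT} and \cite{levermore}, so your proposal is in effect a reconstruction of the classical argument rather than an alternative to anything in the text. The skeleton you chose is the standard one and most of it is sound: detailed balance does make $K$ symmetric on $\mathcal{H}$; the pre/post-collisional change of variables gives the Dirichlet-form identity $-\langle \L_{2}f,f\rangle_{\mathcal{H}}=\frac12\int B\,\M(v)\M(\vb)\left(h(v')-h(v)\right)^{2}\d v\d\vb\d\sigma$, hence negativity and $\mathrm{Ker}(\L_{2})=\mathrm{span}(\M)$; and once one knows that $K$ is a relatively compact symmetric perturbation of the multiplication operator $T$, Weyl's theorem plus the spectral theorem do give the gap and the semigroup estimate exactly as you wrote.

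The genuine gap is in the step where you claim compactness (and relative boundedness) of $K$ on $\mathcal{H}$ for the whole range $\gamma\in[0,d-2]$. Lemma \ref{lemmG} is proved only for $\gamma>\frac{d-2}{2}$, and it cannot be ``extended by the comparison inequality \eqref{compg} to the full range'': the restriction in that lemma is caused by \eqref{compg} itself, since the factor $|v-w|^{\gamma-(d-2)}$ yields $p_{\gamma}(v,w)^{2}\lesssim |v-w|^{2\gamma+2-2d}e^{-\cdots}$, which is not locally integrable near the diagonal once $\gamma\leq\frac{d-2}{2}$ (in $d=3$ this excludes $\gamma\in[0,\tfrac12]$, in particular Maxwell molecules). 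So, as written, your route establishes compactness of $KR(\lambda,T)$ only on part of the asserted range; covering all of $[0,d-2]$ requires Grad-type estimates or the results of \cite{levermore} — which is precisely what the paper cites. Similarly, $T$-boundedness of $K$ does not follow from ``Proposition \ref{BCL} with $m\equiv 1$'': that proposition is a weighted $L^{1}(\d v)$ estimate, stated and proved for nontrivial weights, and a bound on $\int k_{\gamma}(v,w)\d v$ does not control the operator on $L^{2}(\M^{-1})$; what does work is a Schur test on the symmetric kernel $p_{\gamma}$ using the pointwise bound displayed just before Lemma \ref{lemmG}, which gives $\sup_{v}\int p_{\gamma}(v,w)\d w<\infty$ for every $\gamma>-1$. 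Finally, even granting compactness, your argument only produces \emph{some} $\mu_{2}>0$, whereas the statement asserts an \emph{explicit} constant; you acknowledge this and defer to \cite{LMT}, so the quantitative half of the theorem remains, exactly as in the paper, a citation rather than a proof.
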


Notice that, the spectrum of $\L_{2}$ in $\H$ can be expressed as follows
$$\mathfrak{S}(\L_{2})=\left(-\infty,-\eta\right] \cup  \left\{\lambda_{n}\right\}_{n}$$
where $\lambda_{1}=0 > \lambda_{2} >  \lambda_{n} >  \lambda_{n+1}$ $(n \in \mathbb{R})$ are nonpositive eigenvalues of $\L_{2}$ with finite algebraic multiplicities and $\lim_{n\to\infty}\lambda_{n}=-\eta=-\inf_{v \in \R^{d}}\Sigma(v) > 0.$ For any $n \in \mathbb{N}$, we denote by
$\Pi_{\L_{2},n}$ the spectral projection associated to $\lambda_{n}$. Notice that $\mu_{2}=-\lambda_{2}.$\medskip

In order to extend the above result to the weighted $L^{1}$-space $\X$, we shall resort to the following that we state here for the above operators $\L_{2}$ and $\L_{1}$:
\begin{theo}{\cite[Theorem 2.1]{gualdani}}\label{theoMMG}
Assume that the linear operator $(\L_{1},\D(L_{1}))$ in $\X$ admits the following splitting:
$$\L_{1}=\mathcal{A}_{1}+ \mathcal{B}_{1}$$
where
\begin{enumerate}[(i)]
\item $\mathcal{A}_{1}\::\:\X  \to \H$ is bounded;
\item the operator
  $\mathcal{B}_{1}\::\:\D(\mathcal{B}_{1}) \to \X$ (with
  $\D(\mathcal{B}_{1})=\D(\L_{1})=\Y$) is $\beta$-dissipative for some
  positive $\beta >0$, i.e.
  \begin{equation}
    \label{Bdiss}
    \IR \mathrm{sign}f(v) \mathcal{B}_{1}f(v)m^{-1}(v)\d v
    \leq -\beta  \|f\|_{\Y}
    \qquad \forall f \in \Y.
  \end{equation}
\end{enumerate}
Then, for any $\varepsilon > 0$, 
$$\mathfrak{S}(\L_{1}) \cap \{z \in \mathbb{C}\;\,\,\mathrm{Re}z \geq -\eta+\varepsilon\}=\mathfrak{S}(\L_{2}) \cap \{z \in \mathbb{C}\,;\,\,\mathrm{Re}z \geq -\eta +\varepsilon\}=\{\lambda_{1},\ldots, \lambda_{N_{\varepsilon}}\}$$
for some finite $N_{\varepsilon} \in \mathbb{N}.$ Moreover, for any $k \in \{1,\ldots,N_{\varepsilon}\}$, $ \lambda_{k}$ is an eigenvalue of $\L_{1}$ with algebraic multiplicity which is finite and equal to the one of $\lambda_{k}$ as an eigenvalue of $\L_{2}$ and the associated spectral projection $\Pi_{\L_1,k}$ is given by $\Pi_{\L_{1},k}\vert_{\H}=\Pi_{\L_{2},k}$.
\end{theo}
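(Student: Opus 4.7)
The plan is to adapt the enlargement-of-the-functional-space strategy of Gualdani-Mischler-Mouhot, whose heart is a careful factorization of the resolvent of $\L_{1}$ by means of the splitting $\L_{1}=\mathcal{A}_{1}+\mathcal{B}_{1}$, combined with the complete spectral information already available for $\L_{2}$ on the small Hilbert space $\H$. First I would exploit hypothesis (ii) to establish basic semigroup estimates: the $\beta$-dissipativity condition \eqref{Bdiss} ensures that $(\mathcal{B}_{1},\D(\mathcal{B}_{1}))$ generates a $C_{0}$-semigroup on $\X$ with growth bound at most $-\beta$, so that the half-plane $\{\mathrm{Re}\,\lambda>-\beta\}$ is contained in the resolvent set of $\mathcal{B}_{1}$ with the uniform estimate $\|R(\lambda,\mathcal{B}_{1})\|_{\mathscr{L}(\X)}\leq (\mathrm{Re}\,\lambda+\beta)^{-1}$. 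With $\beta$ chosen large enough (which the splitting used in practice guarantees), the entire region $\{\mathrm{Re}\,\lambda\geq -\eta+\varepsilon\}$ lies in $\varrho(\mathcal{B}_{1})$. A parallel verification, implicit in the choice of splitting, shows that $\mathcal{B}_{1}$ also acts dissipatively on $\H$.

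Next I would write the standard resolvent identity
\begin{equation*}
R(\lambda,\L_{1})=R(\lambda,\mathcal{B}_{1})+R(\lambda,\mathcal{B}_{1})\,\mathcal{A}_{1}\,R(\lambda,\L_{1})
\end{equation*}
and iterate it to get, for any integer $N\geq 1$,
\begin{equation*}
R(\lambda,\L_{1})=\sum_{n=0}^{N-1}\bigl(R(\lambda,\mathcal{B}_{1})\mathcal{A}_{1}\bigr)^{n}R(\lambda,\mathcal{B}_{1})+\bigl(R(\lambda,\mathcal{B}_{1})\mathcal{A}_{1}\bigr)^{N}R(\lambda,\L_{1}).
\end{equation*}
By hypothesis (i), the leftmost $\mathcal{A}_{1}$ in the remainder sends $\X$ into $\H$; writing the symmetric identity on the other side and combining with the analogous expansion for $\L_{2}$ in $\H$, one obtains a representation of the form
\begin{equation*}
R(\lambda,\L_{1})=\mathcal{U}_{N}(\lambda)+\mathcal{V}_{N}(\lambda)\,R(\lambda,\L_{2})\,\mathcal{W}_{N}(\lambda),
\end{equation*}
where $\mathcal{U}_{N}(\lambda)$, $\mathcal{V}_{N}(\lambda)$, $\mathcal{W}_{N}(\lambda)$ are explicit holomorphic families of bounded operators $\X\to\X$, $\H\to\X$ and $\X\to\H$ respectively, built as finite products of $R(\lambda,\mathcal{B}_{1})$ and $\mathcal{A}_{1}$. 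This representation makes manifest that all singularities of $R(\lambda,\L_{1})$ in the region $\{\mathrm{Re}\,\lambda\geq -\eta+\varepsilon\}$ come from those of $R(\lambda,\L_{2})$, and conversely since the same argument applies with the roles of $\L_{1}$ and $\L_{2}$ exchanged.

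The three conclusions of the theorem then follow. Since $\L_{2}$ is self-adjoint in $\H$ with spectrum $(-\infty,-\eta]\cup\{\lambda_{n}\}_{n\geq 1}$ accumulating only at $-\eta$, the region $\{\mathrm{Re}\,z\geq -\eta+\varepsilon\}$ contains only finitely many eigenvalues $\lambda_{1},\ldots,\lambda_{N_{\varepsilon}}$, each of finite algebraic multiplicity. The factorization transfers this structure to $\L_{1}$ in $\X$, with identical eigenvalues and preserved multiplicities. Finally, the identity of the spectral projections $\Pi_{\L_{1},k}|_{\H}=\Pi_{\L_{2},k}$ is obtained by Dunford calculus: both projections are given by
\begin{equation*}
\Pi_{\L_{j},k}=\frac{1}{2i\pi}\oint_{\Gamma_{k}}R(\lambda,\L_{j})\,\d\lambda,\qquad j=1,2,
\end{equation*}
along a small loop $\Gamma_{k}$ enclosing only $\lambda_{k}$, and the resolvent identity above forces these contour integrals to coincide on $\H$.

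The main technical obstacle is the quantitative control of the iterated products $\bigl(R(\lambda,\mathcal{B}_{1})\mathcal{A}_{1}\bigr)^{N}$: one must show that for $N$ large enough, these operators become small in norm (and in fact map $\X$ into $\H$) uniformly on compact subsets of $\{\mathrm{Re}\,\lambda\geq -\eta+\varepsilon\}$ avoiding the eigenvalues $\lambda_{1},\ldots,\lambda_{N_{\varepsilon}}$. This requires delicate resolvent estimates on $\|R(\lambda,\mathcal{B}_{1})\|$ as $|\mathrm{Im}\,\lambda|\to\infty$, and is where the concrete structure of the splitting plays a decisive role: $\mathcal{B}_{1}$ is typically chosen as the multiplication operator $-\Sigma$ augmented by a truncation of $K$ to small velocities, while $\mathcal{A}_{1}$ collects the complementary high-velocity part, so that the smoothing effect of the Carleman kernel $k_{\gamma}$ computed in Appendix \ref{sec:appA} precisely produces the required gain in integrability $\X\to\H$. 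Once these estimates are in place, the spectral transfer follows by the standard perturbation arguments sketched above.
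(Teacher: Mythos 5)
The paper does not prove this theorem; it is quoted as \cite[Theorem 2.1]{gualdani} (Gualdani--Mischler--Mouhot), and the paper's own contribution in Appendix \ref{sec:appB} is only the \emph{verification} of hypotheses (i)--(ii) for the Boltzmann operator via Proposition \ref{BCL} and Lemma \ref{lemmG}. So there is no internal proof to match against; what can be assessed is whether your sketch is a faithful rendering of the GMM factorization argument. In outline it is, but three points deserve sharpening.

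First, the operator order in the iterated resolvent identity matters and your version has it in the wrong place. You write the remainder as $\bigl(R(\lambda,\mathcal{B}_1)\mathcal{A}_1\bigr)^N R(\lambda,\L_1)$, whose outermost operator is $R(\lambda,\mathcal{B}_1)$, which does not land in $\H$. To exploit hypothesis (i) one should organize the Dyson expansion so that $\mathcal{A}_1$ is the last operator applied before composing with the resolvent on the small space, and more to the point, one should use $R(\lambda,\L_2)$ itself rather than $R(\lambda,\L_1)$ restricted. The working candidate is
\begin{equation*}
\Gamma(\lambda)\;=\;R(\lambda,\mathcal{B}_1)\;+\;R(\lambda,\L_2)\,\mathcal{A}_1\,R(\lambda,\mathcal{B}_1),
\end{equation*}
which makes sense because $\mathcal{A}_1 R(\lambda,\mathcal{B}_1)$ maps $\X$ boundedly into $\H$ by (i). One then checks directly, using $\L_1=\mathcal{A}_1+\mathcal{B}_1$ and the fact that $\L_2$ is the part of $\L_1$ in $\H$, that $(\lambda-\L_1)\Gamma(\lambda)=\mathrm{Id}$ and $\Gamma(\lambda)(\lambda-\L_1)=\mathrm{Id}$; this both proves $\lambda\in\varrho(\L_1)$ and identifies $R(\lambda,\L_1)=\Gamma(\lambda)$. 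Your phrase about "the leftmost $\mathcal{A}_1$ sending $\X$ into $\H$" and "the symmetric identity on the other side" gestures in this direction but does not avoid the circularity of invoking $R(\lambda,\L_1)$ on $\H$ before knowing $\lambda\in\varrho(\L_1)$.

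Second, under hypothesis (i) as stated a single factorization suffices ($N=1$): the regularizing gain $\X\to\H$ is immediate, and no smallness of $\bigl(\mathcal{A}_1 R(\lambda,\mathcal{B}_1)\bigr)^N$ is needed. Your discussion of "for $N$ large enough these operators become small" and of delicate high-frequency resolvent estimates imports machinery from the more general GMM framework where the regularization is only gained progressively; here it is an overcomplication. Third, your remark that $\beta$ can be "chosen large enough" so that $\{\mathrm{Re}\,\lambda\geq -\eta+\varepsilon\}\subset\varrho(\mathcal{B}_1)$ for any $\varepsilon>0$ is not implied by hypothesis (ii), which only provides \emph{some} $\beta>0$; the spectral identification is only obtained in the half-plane $\{\mathrm{Re}\,z>-\beta\}$ and the "for any $\varepsilon>0$" in the statement presupposes that the splitting can be built with $\beta$ arbitrarily close to $\eta$, which is precisely what the paper's truncation-by-$B_R$ construction achieves. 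The remainder of your argument — that the region contains finitely many eigenvalues inherited from the self-adjoint spectrum of $\L_2$, and that the Dunford contour integrals for the spectral projections coincide on $\H$ once the resolvents do — is sound.
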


The above Theorem asserts that, provided $\L_{1}$ admits the above splitting (i)-(ii) in the weighted space $\L_{1}$, the second eigenvalue of $\L_{1}$ in $\X$ is $\lambda_{2}=-\mu_{2}$. This, combines with the results of Section \ref{sec:hard} would implies that the constant $\lambda_{*}$ appearing in Theorem \ref{theo:introHard} can be chosen as $\mu_{2}+\delta$ for \emph{any} $\delta >0$. Since $\mu_{2}$ is explicit, this provides a quantitative estimate of the speed of convergence in Theorem \ref{theo:introHard}. 
\begin{nb}
Notice that it would be possible to invoke also results from \cite{gualdani}, especially, \cite[Theorem 2.13]{gualdani}, to deduce decay properties for the semigroup $(V_{1}(t))_{t\geq 0}$ from that of $(V_{2}(t))_{t\geq 0}$ under the assumptions of Theorem \ref{theoMMG}. We refer also to \cite{scheer} for more results in this direction and to \cite[Theorem 3.1]{CL} for a much less general but simpler version of this kind of results.\end{nb}

It remains to prove that $\L_{1}$ can be split as 
$$\L_{1}=\mathcal{A}_{1}+\mathcal{B}_{1}$$
where both the operators $\mathcal{A}_{1}$ and $\mathcal{B}_{1}$ satisfy the assumptions of Theorem \ref{theoMMG}. To do so, for any $R > 0$, let 
$$\mathcal{A}_{1} f=K(\chi_{B_{R}}f), \qquad \mathcal{B}_{1}=\mathscr{L}_{1}-\mathcal{A}_{1}$$
where $B_{R}$ is the open ball in $\R^{d}$ with radius $R >0$ and center $0$ and $\chi_{B_{R}}$ denotes the indicator function of $B_{R}$. 

Our proof is very similar to the one of \cite{BCL} and is based upon the results of Appendix \ref{sec:appA}.
One has therefore, with the notations of Section \ref{sec:hard}
$$\mathcal{A}_{1}f(v)=\int_{B_{R}}k(v,w)f(w)\d w, \qquad \mathcal{B}_{1}f(v)=-\Sigma(v)f(v)+\int_{\{|w| >R\}}k(v,w)f(w)\d w$$

\begin{propo} With the above notations, for any $R > 0$, 
$$\mathcal{A}_{1}\::\:\X \to \H$$
is bounded.
\end{propo}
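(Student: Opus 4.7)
The plan is to apply Minkowski's integral inequality in the target space $\H = L^{2}(\R^{d},\M^{-1}(v)\,dv)$. Writing $\mathcal{A}_1 f(v) = \int_{B_R} k(v,w) f(w)\,dw$ and passing the $\H$-norm under the outer integral gives
\[
\|\mathcal{A}_1 f\|_{\H} \leq \int_{B_R} \|k(\cdot,w)\|_{\H}\,|f(w)|\,dw.
\]
Since the weight satisfies $m^{-1}(w) \geq 1$, one has $\int_{B_R} |f(w)|\,dw \leq \|f\|_{\X}$, so the proposition reduces to showing the uniform bound
\[
M_R := \sup_{w \in B_R} \|k(\cdot,w)\|_{\H} < \infty,
\]
after which $\|\mathcal{A}_1 f\|_{\H} \leq M_R \|f\|_{\X}$.

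To establish $M_R < \infty$, I would invoke the detailed balance relation via the symmetric kernel $p_\gamma(v,w) = \M^{-1/2}(v) k_\gamma(v,w) \M^{1/2}(w)$ introduced in Appendix \ref{sec:appA}, which yields
\[
\|k(\cdot,w)\|_{\H}^{2} = \M^{-1}(w) \int_{\R^{d}} p_\gamma(v,w)^{2}\,dv.
\]
For $w \in B_R$, the factor $\M^{-1}(w)$ is clearly bounded. The remaining $L^{2}$ integral is directly controlled by Lemma \ref{lemmG} in the range $\gamma > (d-2)/2$. For the complementary range $\gamma \in [0,(d-2)/2]$, one has to argue from the Carleman representation itself: since $\gamma > -1$, the auxiliary integral $I_\gamma(v,w)$ stays uniformly bounded for $v,w \in B_R$, so that $k_\gamma(\cdot,w)$ has near $v = w$ only a $|v-w|^{-1}$ singularity. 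The induced $|v-w|^{-2}$ singularity of $p_\gamma^{2}$ is then locally integrable in $\R^{d}$ (recalling $d \geq 3$), while the Gaussian factor of $k_\gamma$ combined with $\M^{-1/2}(v)$ yields integrability at infinity, uniformly in $w \in B_R$.

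The main technical point is the low range $\gamma \leq (d-2)/2$: the crude bound $k_\gamma \leq |v-w|^{\gamma-(d-2)} k_{d-2}$ used in proving Lemma \ref{lemmG} overestimates the singularity at $v=w$ and does not yield local $L^{2}$ integrability there. A refined estimate of $I_\gamma(v,w)$ on bounded sets, exploiting that the projection $V_\perp$ stays bounded whenever $v,w \in B_R$, resolves this issue. Once $M_R < \infty$ is secured in this manner, the bound $\|\mathcal{A}_1 f\|_{\H} \leq M_R \|f\|_{\X}$ is immediate from the Minkowski estimate above.
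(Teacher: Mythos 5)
Your proof uses exactly the same machinery as the paper's: Minkowski's integral inequality in the $\H$-norm, the detailed-balance reduction to the symmetric kernel $p_\gamma$, and Lemma~\ref{lemmG} to control $\int_{\R^d}p_\gamma^2(v,w)\,\d v$. The genuine added value of your proposal is the observation that Lemma~\ref{lemmG} is stated only for $\gamma > (d-2)/2$, whereas Appendix~B targets the full range $\gamma\in[0,d-2]$. The paper invokes the lemma without comment, so its proof of the Proposition is in fact incomplete for $\gamma\in[0,(d-2)/2]$: there the crude comparison $k_\gamma\leq|v-w|^{\gamma-(d-2)}k_{d-2}$ produces a singularity of order $\gamma-d+1\leq -d/2$ near $v=w$, whose square is not locally integrable in $\R^d$. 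Your proposed repair is correct in substance: returning to the Carleman representation, the inner integral
$$I_\gamma(v,w)=\int_{(v-w)^\perp}e^{-|\xi|^2/2}\bigl(|V_\perp-\xi|^2+|v-w|^2\bigr)^{(\gamma-d+2)/2}\,\d\xi$$
stays bounded as soon as $\gamma>-1$, since the $(d-1)$-dimensional singularity at $\xi=V_\perp$ has exponent $\gamma-d+2>-(d-1)$; moreover the Gaussian $e^{-|\xi|^2/2}$ controls large $|V_\perp|$, so the bound is actually uniform in $(v,w)\in\R^{2d}$, not merely on $B_R\times B_R$. This gives $k_\gamma(v,w)\leq C|v-w|^{-1}$ times the Gaussian factor from \eqref{kd-2}, hence a locally integrable $|v-w|^{-2}$ singularity for $p_\gamma^2$ (using $d\geq 3$), while the Gaussian tail yields integrability at infinity uniformly for $w\in B_R$. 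So your argument is the paper's proof for $\gamma>(d-2)/2$, together with a correct fix for the low range that the paper overlooks.
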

\begin{proof} Let $f \in \X$ be given and let $k(v,w)$ be the kernel of $K$. Recall that $K=\mathbf{K}_{\gamma}$ with kernel $k_{\gamma}(v,w).$ Using Minkowski's integral inequality (with measures $\M^{-1}(v)\d v$ and $|f(w)|\d w$) one gets easily
$$\|\mathcal{A}_{1}f\|_{\H} \leq \int_{|w|\leq R} |f(w)|\left(\IR k_{\gamma}^2(v,w)\M^{-1}(v)\d v\right)^{1/2}\d w.$$
Now, with the notations of Appendix \ref{sec:appA}, one has
$$\IR k_{\gamma}^2(v,w)\M^{-1}(v)\d v = \M^{-1}(w)\IR p_{\gamma}^2(v,w)\d v$$
and, using Lemma  \ref{lemmG}, there is some positive constant $C >0$ such that
$$\IR k_{\gamma}^2(v,w)\M^{-1}(v)\d v \leq C \M^{-1}(w)(1+|w|)^{-1} \leq C\M^{-1}(w) \qquad \forall w \in \R^3.$$
Thus,
$$\|\mathcal{A}_{1}f\|_{\H} \leq \sqrt{C}  \int_{|w|\leq R} |f(w)|\M^{-1/2}(w)\d w$$
and, since the domain of integration is bounded,  there is some positive constant $c_R >0$ such that $\|\mathcal{A}_{1}f\|_{\H} \leq c_R \|f\|_{\X}$ which proves point.\end{proof} Now, we prove that $R >0$ can be chosen in such a way that
$$\mathcal{B}_{1}f(v)=\L_{1} f(v) -\mathcal{A}_{1}f(v)=\L^+(\chi_{\{|\cdot| > R\}}f)(v)-\sigma(v)f(v)$$
satisfies the above point (ii). For any $f \in \mathcal{Y}$, set $I(f)=\IR \mathrm{sign}f(v) \mathcal{B}f(v)m^{-1}(v)\d v$.
Now, one has the following
\begin{propo} Assume that $m$ is given by \eqref{expwe} with $1-\gamma<s<1$ There exists $R > 0$ large enough so that
\begin{equation*}
    \IR \mathrm{sign}f(v) \mathcal{B}_{1}f(v)m^{-1}(v)\d v
    \leq -\beta  \|f\|_{\Y}
    \qquad \forall f \in \Y
  \end{equation*}
where $\beta=\inf_{v \in\ \R^{d}}\dfrac{\Sigma(v)}{(1+|v|)^{\gamma}}.$\end{propo}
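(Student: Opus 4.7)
The plan is to exploit the positivity of the collision kernel $k(v,w)$ of $K$ together with the decay estimate on
\begin{equation*}
H_\gamma(w) := \int_{\R^d} k(v,w)\, m^{-1}(v)\, \d v
\end{equation*}
furnished by Proposition \ref{BCL}. From the definition $\mathcal{B}_1 f(v) = -\Sigma(v) f(v) + \int_{\{|w|>R\}} k(v,w) f(w)\, \d w$, one directly computes
\begin{equation*}
I(f) = -\int_{\R^d} \Sigma(v) |f(v)| m^{-1}(v)\, \d v + \int_{\R^d} \mathrm{sign}(f(v))\, m^{-1}(v) \int_{\{|w|>R\}} k(v,w) f(w)\, \d w\, \d v.
\end{equation*}
Using $|\mathrm{sign}(f(v))\, f(w)| \leq |f(w)|$ together with $k \geq 0$, Fubini's theorem yields the pivotal inequality
\begin{equation*}
I(f) \leq -\int_{\R^d} \Sigma(v) |f(v)| m^{-1}(v)\, \d v + \int_{\{|w|>R\}} |f(w)|\, H_\gamma(w)\, \d w.
\end{equation*}

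Next, I invoke Proposition \ref{BCL}(i) for the exponential weight $m(v)=\exp(-a|v|^s)$: there exists $C_0 > 0$ such that $H_\gamma(w) \leq C_0 (1+|w|^{\gamma-s}) m^{-1}(w)$. Combining with $\Sigma(v) \geq \sigma_1 (1+|v|)^\gamma$ from \eqref{sigma}, and noting that $s>0$ implies $(1+|w|^{\gamma-s})/(1+|w|)^\gamma \to 0$ as $|w|\to\infty$, for any prescribed $\epsilon \in (0,1)$ one can pick $R=R(\epsilon)$ large enough that $H_\gamma(w) \leq \epsilon\, \Sigma(w)\, m^{-1}(w)$ on $\{|w|>R\}$. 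Substituting into the pivotal inequality produces
\begin{equation*}
I(f) \leq -(1-\epsilon)\int_{\R^d} \Sigma(v) |f(v)| m^{-1}(v)\, \d v \leq -(1-\epsilon)\beta \|f\|_{\Y},
\end{equation*}
the final inequality being the definition of $\beta$.

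The main technical subtlety, and what I expect motivates the sharper condition $s > 1-\gamma$ in the hypothesis rather than merely $s > 0$, is the recovery of the exact constant $\beta$ in place of $(1-\epsilon)\beta$. The refinement I plan to implement is to split the loss as $\beta\|f\|_{\Y}$ plus the nonnegative surplus $\int [\Sigma(v)-\beta(1+|v|)^\gamma]\,|f(v)|\, m^{-1}(v)\, \d v$, and then use the sharper decay $H_\gamma(w) m(w)/(1+|w|)^\gamma = O(|w|^{-s})$ from Proposition \ref{BCL}(i) to absorb the gain contribution directly into that surplus; the inequality $s > 1-\gamma$ appears precisely to match the rate $|w|^{-(1-\gamma)}$ at which $\Sigma(v)/(1+|v|)^\gamma$ tends to its infimum along certain directions. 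In any event, the weakened bound $I(f) \leq -(1-\epsilon)\beta \|f\|_{\Y}$ derived above is already sufficient to invoke Theorem \ref{theoMMG} (with $\beta$ replaced by $(1-\epsilon)\beta$), which suffices for the quantitative spectral gap application driving the whole appendix.
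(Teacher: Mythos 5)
Your proposal is correct and follows essentially the same route as the paper's own proof: bound the gain contribution to $I(f)$ by $\int_{\{|w|>R\}}|f(w)|H_\gamma(w)\,\d w$ via positivity of the kernel and Fubini, invoke Proposition \ref{BCL} to control $H_\gamma$, and take $R$ large to absorb this into the loss. Your way of organizing the absorption---choosing $R$ so that $H_\gamma(w)\leq\epsilon\,\Sigma(w)m^{-1}(w)$ on $\{|w|>R\}$ and then factoring out $(1-\epsilon)$---is in fact a little cleaner than the paper's splitting of the $v$-integral at $|v|=R$, and it correctly lands the $\Y$-norm; the paper's last display writes $\int|f|\,m^{-1}\,\d v=\|f\|_{\Y}$ where the left-hand side is actually $\|f\|_{\X}$. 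Your reservation about the sharp constant $\beta=\inf_v\Sigma(v)/(1+|v|)^\gamma$ is justified: for any fixed finite $R$ the gain over $\{|w|>R\}$ is strictly positive and cannot be absorbed in its entirety, so one obtains $(1-\epsilon)\beta$ but not $\beta$ itself; as you observe, this is harmless for Theorem \ref{theoMMG}, which only requires some positive dissipation constant. Your guess that the hypothesis $1-\gamma<s$ is what would recover the exact constant is not right, though: its only role is to force $\gamma>1-s>0$, and it is the condition $\gamma>0$ (rather than $1-\gamma<s$) that makes $(1+|w|^{\gamma-s})/(1+|w|)^\gamma$ vanish at infinity. Indeed the paper's own proof remarks that only $s>0$ is used, implicitly with $\gamma>0$.
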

\begin{proof} Let $f \in \Y$ be fixed.
\begin{equation*}\begin{split}
I(f)&=\IR \mathrm{sign}f(v) K(\chi_{\{|\cdot| > R\}}f)f(v)m^{-1}(v)\d v-\IR \Sigma(v)|f(v)|m^{-1}(v)\d v\\
&=\IR \mathrm{sign}f(v)m^{-1}(v)\d v \int_{\{|w|> R\}} k_{\gamma}(v,w)\d w -\IR \Sigma(v)|f(v)|m^{-1}(v)\d v  \\
&\leq \int_{\{|w|> R\}} |f(w)|H_{\gamma}(w)\d w -\sigma_0\IR (1+|v|)^{\gamma}|f(v)|m^{-1}(v)\d v
\end{split}
\end{equation*}
where we used the fact that $\Sigma(v) \geq \sigma_0 (1+|v|)^{\gamma}$ for some positive constant $\sigma_0 >0$ and set, as in Appendix \ref{sec:appA},
$$H_{\gamma}(w)=\IR k_{\gamma}(v,w)m^{-1}(v)\d v, \qquad \forall w \in \R^3.$$
Let us assume now that $m$ is an \emph{exponential weight} given by \eqref{expwe}.Then, using Proposition \ref{BCL}, there is some positive constant $C >0$ such that
$$I(f) \leq C \int_{\{|w|> R\}} |f(w)| \left(1+|w|^{\gamma-s}\right)m^{-1}(w)\d w -\sigma_0\IR (1+|v|)^{\gamma}|f(v)|m^{-1}(v)\d v.$$
In other words,
\begin{multline*} I(f)
  \leq  - \sigma_0 \int_{\{|v| \leq R\}} |f(v)| m^{-1}(v) \,\d v
  \\
 + \int_{\{|v|> R\}} |f(v)|
  \left(
    C (1 + |v|^{\gamma-s})
    -
    \sigma_0 (1+|v|)^{\gamma}
  \right)
  \, m^{-1}(v)  \,\d v.\end{multline*}
We choose now $R > 0$ such that $C(1 + |v|^{\gamma-s}) - \sigma_0
(1+|v|)^{\gamma} \leq - \sigma_0$ for all $|v| > R$ (which can be done since $s >0$), so that
\begin{equation}
  \label{eq:If-3}
  I(f)
  \leq
  - \sigma_0 \IR |f(v)| m(v)^{-1} \,\d v
  =
  - \sigma_0 \|f\|_{\mathcal{Y}},
\end{equation}
i.e. $\mathcal{B}_{1}$ satisfies \eqref{Bdiss} with $\beta=\sigma_0 $.\\

Assume now that $m$ is an algebraic weight given by \eqref{algwe}. Then, as above, using Proposition \ref{BCL}, there is some positive constant $C >0$ such that 
$$I(f) \leq C \int_{\{|w|> R\}} |f(w)| \left(1+|w|^{\gamma-2}\right)m^{-1}(w)\d w -\sigma_0\IR (1+|v|)^{\gamma}|f(v)|m^{-1}(v)\d v$$
and, repeating the above argument, one can choose $R > 0$ large enough so that $C(1 + |v|^{\gamma-s}) - \sigma_0
(1+|v|)^{\gamma} \leq - \sigma_0$ for all $|v| > R$ to see that $\mathcal{B}_{1}$ satisfies \eqref{Bdiss} with $\beta=\sigma_{0}.$\end{proof}

\end{document}